\def\th{\theta}
\def\Th{\Theta}
\def\e{{\sf e}}
\def\r{{\rm r}}
\def\d{{\rm d}}
\def\bu{\bullet}
\def\({\left(}
\def\[{\left[}
\def\){\right)}
\def\]{\right]}
\def\si{\sigma}
\def\Si{\Sigma}
\def\G{\mathcal G}
\def\<{\langle}
\def\>{\rangle}
 \newtheorem{thm}{Theorem}[section]
 \newtheorem{cor}[thm]{Corollary}
 \newtheorem{lem}[thm]{Lemma}
 \newtheorem{prop}[thm]{Proposition}
 \theoremstyle{definition}
 \newtheorem{defn}[thm]{Definition}
 \theoremstyle{remark}
 \newtheorem{rem}[thm]{Remark}
 \newtheorem{ex}[thm]{Example}
 \numberwithin{equation}{section}
\numberwithin{equation}{section}
\begin{document}


\title{Morphisms of Groupoid Actions and Recurrence}


\author{F. Flores and M. M\u antoiu
\footnote{
\textbf{2010 Mathematics Subject Classification:} Primary 22A22, 37B20, Secondary 37B05, 58H05.
\newline
\textbf{Key Words:} groupoid action, dynamical system morphism, orbit, minimal, transitive, wandering, factor.}
}

\maketitle


\begin{abstract}
Topological groupoids admit various types of morphisms. We push these notions to the level of continuous groupoid actions to obtain various types of groupoid action morphisms. Some dynamical properties and their relation to these morphisms are studied. Among them are recurrence, various forms of transitivity, minimality, limit, recurrent, periodic and almost periodic  points.
\end{abstract}

\tableofcontents

\section{Introduction}\label{introduction}

In a recent paper \cite{FM} we studied some basic concepts of topological dynamics in the setting of continuous groupoid actions \cite{Wi}, focusing on recurrence phenomena. Guided by classical textbooks on group dynamical systems, as \cite{Au,dV,Gl}, we examined notions as (point, topological or recurrent) transitivity, minimality, (non-)wandering, fixed, recurrent, periodic and almost periodic points. Various examples have been considered and we exhibited some, maybe unexpected, phenomena appearing for groupoids for which the source map is not open.

\smallskip
In \cite[Sect.7]{FM} we investigated how epimorphisms preserve these notions. Epimorphism meant {\it one} topological groupoid acting on two topological spaces and a continuous equivariant surjection connecting them. After a review of some basic definitions from the realm of topological groupoid actions (Subsections \ref{gogam} and \ref{sucritor}), we extend here the study of morphisms in two new directions, the main purpose being to {\it allow a change of groupoids}. There are several ways to make this change, depending on what "groupoid morphism" means.

\smallskip
As groupoids are special classes of categories, the first (natural) notion of groupoid morphism is, of course, a continuous functor. This is what is taken into account in most of the references. We used this type of morphism to introduce, in Subsection \ref{goam}, morphisms of actions, consisting of pairs of maps
$$
(\Psi,f):(\Xi,\Si)\to\big(\Xi',\Si'\big)\,,
$$ 
where $\Psi:\Xi\to\Xi'$ is a continuous functor and $f:\Si\to\Si'$ is a continuous map between the topological spaces. A natural compatibility relation (equivariance) is required. Since generalized vague morphisms treated in Section \ref{supritor} extend this, we only point out a single (simple) new result, Proposition \ref{label}, describing the way  recurrence sets are transformed under these morphisms. It will be useful in subsequent sections.

\smallskip
As an extension, based on the notions of {\it vague functor} and {\it vague isomorphism} (see \cite{MZ}), 
we proceeded to introduce the notion of {\it generalized vague morphisms}. This construction is no longer based on a direct function between the groupoids $\Xi,\Xi'$ but on a morphism between the pullbacks $\Xi(\pi,A),\Xi'(\pi',A')$. In this way, a weaker and indirect notion of morphism is achieved (if a morphism $\Gamma:\Xi\to\Xi'$ exists, it induces naturally a generalized vague morphism, for every pair of pullbacks $\Xi(\pi,A),\Xi'(\pi',A')$). We push this to the level of groupoid actions by adding a continuous function $h:\Si\to \Si'$ with a suitable compatiblity condition.

\smallskip
The last type of morphism we introduce is based on {\it algebraic morphisms}. When studyind groupoid $C^*$-algebras, experts remarked that usual groupoid morphisms do not fit well, and this can be seen at the level of very simple extreme particular cases, groups and spaces. Group algebras $C^*(\G),C^*(\G')$ behave well with respect to (direct) group morphisms $\G\to\G'$, while the Abelian $C^*$-algebras $C_0(X),C_0(X')$ prefer (proper continuous) maps $X\leftarrow X'$ acting in the opposite direction. Recall that the Gelfand functor is contravariant!

\smallskip
References as \cite{Za1,Za2,BS,Bu,BEM,MZ} contributed to a solution, introducing {\it algebraic morphisms} \cite{BEM} (also called {\it actors} in \cite{MZ}), insuring a functorial groupoid $C^*$-algebra construction. Instead of a mapping, such an algebraic morphism $\Phi:\Xi\rightsquigarrow\Xi'$ is a continuous action of $\Xi$ on the topological space $\Xi'$, commuting with the right action of the groupoid $\Xi'$ on itself. Buss, Exel and Meyer argued that "algebraic morphisms are exactly the same as functors between the categories of actions that do not change the underlying space". We refer to \cite[Th.4.12]{BEM} for a precise statement.

\smallskip
It is the purpose of the present article to extend and explore all these generalized notions of morphisms of groupoid actions and to describe their effect on several dynamical properties. The notion of recurrence (dwelling) set is central. Let us now describe in more detail the content of the paper.

\smallskip
{\bf Section \ref{goram}} deals with the definitions of groupoids, actions, orbits and recurrence sets. It is mostly based on our previous work \cite{FM} and classical textbooks in dynamics, such as \cite{Au,dV,Gl}. Subsection \ref{sucritor} reviews concepts as transitivity (of various types), minimality, (non-)wandering sets, periodic, almost periodic, weakly periodic and fixed points. 

\smallskip
On the other hand, in Definition \ref{sprevest} we introduce the concept of groupoid morphism of actions, based on groupoid morphisms (functors). Examples are presented in \ref{startlet}, \ref{novotel} and \ref{homoconstruction}. This type of morphisms make the class of continuous groupoid actions a category. This section ends with the promised result on recurrent sets (Proposition \ref{label}).

\smallskip
{\bf Section \ref{supritor}.} For defining generalized vague morphisms, we need to define the pullback $\Xi(\pi,A)$\,. This needs a continous surjection $\pi:A\to X$ from the topological space $A$ to the unit space $X$ of $\Xi$ and Definition \ref{labar} explains how. Proposition \ref{laker} shows that an action $\Th$ of $\Xi$ induces canonically an action of $\Xi(\pi,A)$, paving the way for Definition \ref{meyer}, which introduces generalized vague morphisms (of groupoids) and Definition \ref{memeyer} which lifts this at the level of actions. Example \ref{furnal} shows that usual morphisms of actions (Definition \ref{sprevest}) are contained, as very particular cases.

\smallskip
Let us consider two actions of two different groupoids $\Xi,\Xi'$, for which a generalized vague morphisms of actions is defined. Even through there is no map $\Xi\to\Xi'$, we can relate the recurrence sets of the two actions by Theorem \ref{both} or by Remark \ref{stift} (which is nothing more than a reformulation). Corollary \ref{securinta} easily follows and relates recurrent transitivity in both actions. Theorem \ref{color} treats the relation between orbits and invariant subsets and Corollaries \ref{secinta}, \ref{garbanzos}, \ref{gogonata} and \ref{rolar} show that (various kinds of) topological transitivity, minimality, periodic points and almost periodic points are preserved (under certain assumptions). Proposition \ref{rollar} provides some different conditions for preserving periodic and almost periodic points, trading hypothesis over the topology of the groupoids for hypothesis over the maps. Finally, Proposition \ref{caciu} treats limit and non-wandering points.

\smallskip
{\bf Section \ref{sucitorr}.} After defining algebraic morphisms $\Phi:\Xi\rightsquigarrow\Xi'$ and giving some (known) examples, we use them to implement changes of groupoids in groupoid actions. This is done in Definition \ref{morfalg}, were we introduced the new notion of {\it algebraic morphism of a groupoid action}. Example \ref{terminalg} shows that it reduces to the usual morphism of group actions when the acting groupoids are groups. Before jumping to the study of the dynamical properties, in Definition \ref{composition} we define a composition of algebraic morphisms of actions and prove its correctness in Proposition \ref{saspermam}. With these morphisms, continuous actions of groupoids form a category. We then prove a Lemma (\ref{constant}), useful later.

\smallskip
As in the case of generalized vague morphisms, there is no direct map from $\Xi$ to $\Xi'$. But once again, and in a different way from that of Theorem \ref{both}, there is a way to connect recurrence sets; this is the content of Theorem \ref{liema}. We exploit this theorem for relating the dynamical properties of two systems joined by an algebraic morphism of actions: Proposition \ref{jnitzel} relates the orbits and the invariant sets, corollaries \ref{sentinta} and \ref{sentintaa} relate the different notions of transitivity, Corollary \ref{siaia} relates the periodic points of the systems and Corollary \ref{siaia2} does the same with almost periodic points. In Definition \ref{amu} we introduce a suitable notion of {\it properness} for algebraic morphisms and we use it for proving Proposition \ref{transflim}, which uses properness for transporting limit points to limit points, recurrent points to recurrent points and weakly periodic points to weakly periodic points.

\smallskip
{\bf Section \ref{zgarier}.} In this appendix we included three facts that we found interesting about algebraic morphisms. Proposition \ref{miraj} is an extension of the result given in \cite[Remark\;2.4, Prop.\;2.12]{Bu}. It proves that if a certain structure map of the algebraic morphism is a homeomorphism, the notion reduces precisely to the morphisms of actions of Definition \ref{sprevest}. In this case, the image of the morphism coincides with the saturation of the unit space of the second groupoid, hence we think the last set as a generalization of the concept of image. Proposition \ref{structure} studies this set and some of its algebraic properties. Finally, in Proposition \ref{enfin}, we prove that algebraic morphisms preserve openness.

\section{Review of groupoid actions}\label{goram}

\subsection{Groupoids and groupoid actions}\label{gogam}

The groupoids $\Xi$ are small categories in which all the morphisms (arrows) are invertible. The objects, also called units, are seen as a particular family $\Xi^{(0)}\!\equiv X\subset\Xi$ of morphisms.
The source and range maps are denoted by ${\rm d,r}:\Xi\to \Xi^{(0)}$ and the family of composable pairs by $\,\Xi^{(2)}\!\subset\Xi\times\Xi$\,. For $M,N\subset X$ one has the important subsets 
\begin{equation}\label{faneaka}
\Xi_M\!:={\rm d}^{-1}(M)\,,\quad\Xi^N\!:={\rm r}^{-1}(M)\,,\quad\Xi_M^N:=\Xi_M\cap\Xi^N.
\end{equation}
Particular cases are the $\d$-fibre $\Xi_x\equiv\Xi_{\{x\}}$\,, the  ${\rm r}$-fibre $\Xi^x\equiv\Xi^{\{x\}}$ and the isotropy group  $\Xi_x^x\equiv\Xi_{\{x\}}^{\{x\}}$ of a unit $x\in X$. 

\smallskip
A topological groupoid is a groupoid $\Xi$ with a topology such that the inversion $\xi\mapsto\xi^{-1}$ and multiplication $(\xi,\eta)\mapsto \xi\eta$ are continuous. The topology in $\Xi^{(2)}$ is the topology induced by the product topology. Whenever the map $\d:\Xi\to X$ is open, we say that {\it the groupoid $\Xi$ is open}. Equivalent conditions are: (i) $\r:\Xi\to X$ is open and (ii) the multiplication is an open map. It is known that a locally compact groupoid possessing a Haar system is open. In particular, \'etale groupoids and Lie groupoids are open.

\smallskip
An equivalence relation on $X$ is defined by $x\approx y$ if $x=\r(\xi)$ and $y={\rm d}(\xi)$ for some $\xi\in\Xi$\,. This leads to the usual notions of {\it orbit, invariant (saturated) set, saturation, transitivity}, etc. 

\begin{defn}\label{grupact}
{\it A (left) groupoid action} is a 4-tuple $(\Xi,\rho,\th,\Si)$ consisting of a groupoid $\Xi$\,, a set $\Si$\,, a surjective map $\rho:\Si\rightarrow X$ ({\it the anchor}) and the action map
\begin{equation*}\label{ganchor}
\th:\Xi\!\Join\!\Si:=\{(\xi,\si)\!\mid\!\d(\xi)=\rho(\si)\}\ni(\xi,\si)\mapsto{\th_\xi(\si)\equiv\xi\!\bu_\th\!\si}\in\Si
\end{equation*} 
satisfying the axioms: 
\begin{enumerate}
\item $\rho(\si)\!\bu_\th\!\si=\si,\, \forall \,\si\in \Si$\,,
\item if $(\xi,\eta)\in \Xi^{(2)}$ and $(\eta,\si)\in \Xi\!\Join\!\Si$, then $(\xi,\eta\!\bu_\th\!\si)\in \Xi\!\Join\!\Si$ and $(\xi\eta)\!\bu_\th\!\si=\xi\!\bu_\th\!(\eta\!\bu_\th\!\si)$\,.
\end{enumerate} 
{\it An action of a topological groupoid} in a topological space (also called {\it a groupoid dynamical system}) is just an action $(\Xi,\rho,\th,\Si)$ where $\Xi$ is a topological groupoid, $\Si$ is a topological space and the maps $\rho,\th$ are continuous. Right actions are defined by analogy.
\end{defn}

If the action $\th$ is understood, we will write $\xi\bu\si$ instead of $\xi\bu_\th\si$. 
For $\xi\in\Xi\,,\,{\sf A},{\sf B}\subset\Xi\,,\,M\subset\Si$ we use the notations
\begin{equation*}\label{botations}
{\sf A}{\sf B}:=\big\{\xi\eta\,\big\vert\,\xi\in{\sf A}\,,\,\eta\in{\sf B}\,,\,\d(\xi)=\r(\eta)\big\}\,,
\end{equation*}
\begin{equation*}\label{otattions}
{\sf A}\bu M:=\big\{\xi\bu\si\,\big\vert\,\xi\in{\sf A}\,,\si\in M\,, \d(\xi)=\rho(\si)\big\}=\bigcup_{\xi\in{\sf A}}\xi\bu M.
\end{equation*} 

\begin{rem}\label{caofi}
In \cite{Wi} it is shown that if the groupoid $\Xi$ is open then ${\sf A}\bu M\subset \Si$ is open whenever the sets ${\sf A}\subset\Xi$ and $M\subset \Si$ are open. For topological group actions  the product ${\sf A}M$ is open provided that only the subset $M$ is open. In addition, if ${\sf A},{\sf B}$ are subsets of the group, ${\sf A}{\sf B}$ is open whenever at least one of the subsets is. Examples in \cite{FM} show that for groupoids this is not true, and this will require some special care. Note that, even for open groupoids, the translation $\xi\bu M$ of an open subset $M$ of $\Si$ could not be open.
\end{rem}

\begin{defn}\label{pinera}
We are going to use {\it orbits} $\mathfrak O_\si\!:=\Xi_{\rho(\si)}\!\bu\si$ and {\it orbit closures} $\overline{\mathfrak O}_\si$\,.  {\it The orbit equivalence relation} will be denoted by $\sim$\,. A subset $M\subset \Si$ is called {\it invariant} if $\xi \bu M\subset M$, for every $\xi\in\Xi$\,. 
\end{defn}

\begin{defn}\label{giudat}
For every $M,N\subset\Si$ one defines {\it the recurrence set} 
\begin{equation*}\label{rrecur}
\widetilde\Xi_M^N=\{\xi\in\Xi\!\mid\!(\xi\bu M)\cap N\ne\emptyset\}\subset\Xi_{\rho(M)}^{\rho(N)}\,.
\end{equation*}
\end{defn}

Actually, when $\rho$ is also injective, one has $\,\widetilde\Xi_M^N=\Xi_{\rho(M)}^{\rho(N)}$\,. This applies, in particular, to Example \ref{startlet}.

\begin{ex}\label{valtoare}
The topological groupoid $\Xi$ also acts on itself, with $\Si:=\Xi$\,, $\rho:=\r$ and $\xi\bu\eta:=\xi\eta$\,. Then
$$
\widetilde\Xi_M^N=\big\{\xi\in\Xi\,\big\vert\, \Xi^{\d(\xi)}\!\cap M\cap\xi^{-1}N\ne\emptyset\big\}\,.
$$
\end{ex}

\begin{ex}\label{vanatoare}
To model a usual continuous group action $(\G,\th,\Si)$\,, one just notes that $\G$ is an obvious topological groupoid with trivial unit space $X=\{\e\}$\,, only formed of the unit of the group. The function $\rho:\Si\to\{\e\}$ is therefore constant, and $\G\!\Join\!\Si=\G\!\times\!\Si$\,. The recurrence sets coincides with the usual ones \cite{Au,dV,Gl}, intensively involved in "classical" dynamical systems.
\end{ex}

\begin{ex}\label{vasnatoare}
More generally, suppose that $\Xi$ is a group bundle. This means that $\d=\r$ and, as a consequence, $\Xi$ may be written as the disjoint union over $X$ of the isotropy groups $\G_x\equiv\Xi_x^x$\,. It is easy to see that the action $(\Xi,\rho,\bu,\Si)$ reduces to a family $\big\{(\G_x,\bu_x,\Si_x)\,\big\vert\,x\in X\big\}$ of group actions (supplied by a suitable continuity condition), where $\Si_x\!:=\rho^{-1}(x)$\,. For $M,N\subset\Si$ one gets immediately 
$$
\widetilde\Xi_M^N=\bigsqcup_{x\in X}\big(\G_x\big)_{M\cap\Si_x}^{N\cap\Si_x}\,.
$$
\end{ex}

Other computations of recurrence sets may be found in \cite{FM}.

\subsection{Dynamical properties of groupoid actions}\label{sucritor}

Using mainly \cite{FM}, to which we refer for extra information, we review some dynamical properties a groupoid action might possess. Most of them are natural generalizations of similar notions for continuous group actions. Some of them (but not all) have already appeared in the literature. They will be treated from the point of view of (various types of) morphisms in the next sections.

\smallskip
We list first various notions related to (topological) transitivity. For group actions, one has $({\rm TT}_3)\Leftrightarrow ({\rm TT}_2)\Leftrightarrow ({\rm TT}_1)\Leftrightarrow ({\rm RT})$\,. Maybe unexpectedly, for groupoid actions only some implications are true; this is connected with the fact that in general, the closure of an invariant set could not be invariant. This may also occur for orbits, so besides the orbit closure $\overline{\mathfrak O}_\si$ of a point $\si$ one also introduces $\mathfrak C_\si\supset\overline{\mathfrak O}_\si$\,, the smallest closed invariant set containing $\si$\,. The situation becomes tame if the source map of the groupoid is open.

\begin{defn}\label{pricinoasa}
Let $\Th:=(\Xi,\rho,\bu,\Si)$ be a continuous groupoid action. We consider the conditions
\begin{enumerate}
\item[(${\rm T})$] There is just one orbit in $\Si$ ({\it transitivity}).
\item[(${\rm PT})$] There exists at least a dense orbit in $\Si$ ({\it pointwise transitivity}).
\item[(${\rm WPT})$] There exists a point $\si\in\Si$ such that $\mathfrak C_\si=\Si$ ({\it weak pointwise transitivity}).
\item[(${\rm TT}_1)$]
$\Si$ is not the union of two proper invariant closed subsets (or, equivalently: two open non-void invariant subsets of $\,\Si$ have non-trivial intersection).
\item[(${\rm TT}_2)$]
Each non-empty open invariant subset of $\,\Si$ is dense.
\item[(${\rm TT}_3)$]
Each invariant subset of $\,\Si$ is ether dense, or nowhere dense ({\it topological transitivity}).
\item[(${\rm RT})$]
For every $\,U,V\subset\Si$ open and non-void,  $\widetilde\Xi_U^V\ne\emptyset$ holds ({\it recurrent transitivity}).
\end{enumerate}
\end{defn}

\begin{defn}\label{acata}
The groupoid dynamical system is {\it minimal} if all its orbits are dense (i.e.\;if there are no proper closed invariant sets).
\end{defn} 

Of course, transitivity $\,\Rightarrow$ minimality $\,\Rightarrow$ pointwise transitivity. The concept of minimality can be adapted in an obvious way to closed invariant subsets $M\subset\Si$\,.

\begin{rem}\label{prostie}
It is shown in \cite[Sect.\,4]{FM} that the following implications hold:
\begin{enumerate}
\item
Transitivity $\,\Rightarrow$ pointwise transitivity $\,\Rightarrow$ recurrent transitivity. 
\item
One has $({\rm TT}_3)\Rightarrow ({\rm RT})\Rightarrow ({\rm TT}_2)\Rightarrow ({\rm TT}_1)\Leftarrow({\rm WPT})\Leftarrow({\rm PT})$\,. 
\item
If the source map $\d$ of the groupoid $\Xi$ is open, the first four conditions in 2. are equivalent. None of these implications is an equivalence in general. Weak pointwise transitivity does not imply $({\rm TT}_2)$ without openness. Pointwise transitivity does not imply topological transitivity.
\end{enumerate}
\end{rem}

Let $\mathcal K(T)$ denote the family of compact subsets of the topological space $T$. 

\begin{defn}\label{pescuit}
\textit{The limit set of the point} $\si\in\Si$ is the closed subset of $\Si$
\begin{equation*}\label{nicetry}
\mathfrak L_\si^\th\equiv\mathfrak L_\si\!:=\!\bigcap_{{\sf K}\in\mathcal K(\Xi)}\!\overline{(\Xi\!\setminus\!{\sf K})\bu\si}=\!\bigcap_{{\sf k}\in\mathcal K(\Xi_{\rho(\si)})}\!\overline{\big(\Xi_{\rho(\si)}\!\setminus{\sf k}\big)\bu\si}\,.
\end{equation*}
When $\si\in\mathfrak L_\si$ holds, we say that $\si$ is \textit{a recurrent point} and write $\si\in\Si_{\rm rec}$. 
\end{defn}

The limit set is void if $\Xi_{\rho(\si)}$ is compact, but it can also be void in other situations. It is contained in the orbit closure  of $\si$ and it expresses asymptotic properties of the orbit. Under certain assumptions, it is invariant and behaves as an attractor. Proofs and other properties of the limit sets and of the recurrence points may be found in \cite[Subsect.\,5.1,\,5.2]{FM}.

\begin{rem}\label{precisely}
The limit set $\mathfrak L_\si$ is characterized in \cite[Subsect.\,5.1]{FM} as being composed precisely of the points $\tau\in\Si$ such that for every neighborhood $V$ of $\tau$, the recurrence set $\widetilde\Xi_\si^V$ is not relatively compact.
\end{rem}

\begin{defn}\label{wandering}
The point $\si\in\Si$ is called {\it wandering} if $\widetilde{\Xi}_W^W$ is relatively compact for some neighborhood $W$ of $\si$\,. In the opposite case, it is called {\it non-wandering} (write $\si\in\Si_{\rm nw}$).
\end{defn}

The non-wandering points form a closed subset, containing all the limit points. These and other facts are shown in \cite[Subsect.\,5.2]{FM}.

\smallskip
Let us finish this section by indicating two other definitions taken from \cite{FM}.

\begin{defn}\label{intaresc}
{\it A fixed point} is a point $\si\in\Si$ such that $\xi\bu\si=\si$ for every $\xi\in\Xi_{\rho(\si)}$\,. This is equivalent with $\widetilde\Xi_\si^\si=\Xi_{\rho(\si)}$\,. We write $\si\in\Si_{\rm fix}$\,.
\end{defn}

\begin{defn}\label{indrasniesc}
\begin{enumerate}
\item[(a)] Let $x\in X$. The subset ${\sf A}$ of $\,\Xi_x$ is called {\it syndetic} if ${\sf KA}=\Xi_x$ for a compact subset ${\sf K}$ of $\,\Xi$\,.
\item[(b)] We say that $\si\in\Si$ is {\it periodic}, and we write $\si\in\Si_{\rm per}$\,, if $\,\widetilde\Xi_\si^\si$ is syndetic in $\Xi_{\rho(\si)}$\,.
\item[(c)] The point $\si$ is called \textit{weakly periodic} (we write $\si\in\Si_{\rm wper}$) if the subgroup $\widetilde\Xi_\si^\si$ is not compact.
\item[(d)] The point $\si\in\Si$ is said to be {\it almost periodic} if $\,\widetilde\Xi_\si^U$ is syndetic in $\Xi_{\rho(\si)}$ for every neighborhood $U$ of $\si$ in $\Si$\,.
We denote by $\Si_{\rm alper}$ the set of all the almost periodic points. 
\end{enumerate}
\end{defn}

Properties of the sets introduced in this definition are studied in \cite[Sect.\,6]{FM}. In \cite[Prop.\,6.12]{FM}, under the assumption that the $\d$-fibres of the groupoid $\Xi$ are not compact, some inclusion relations between the sets $\Si_{\rm fix},\Si_{\rm per},\Si_{\rm alper},\Si_{\rm rec},\Si_{\rm nw}$ are established. In the next sections we are going to need the following results, subject of \cite[Prop.\,6.16\,,\,6.18\,,\,Th.\,2.23]{FM}:
\begin{thm}\label{flacara}
Let $\Th:=(\Xi,\rho,\bu,\Si)$ be a continuous groupoid action.
\begin{enumerate}
\item[(i)]
The periodic points have compact orbits. If $\,\Xi$ is locally compact, second countable and $\d:\Xi\to\Xi$ is open, then the periodic points are exactly those having a compact orbit.
\item[(ii)]
Asuume that both $\Xi$ and $\Si$ are locally compact. If $\si$ is almost periodic, its orbit closure $\overline{\mathfrak O}_\si$ is minimal and compact. If, in addition, $\d:\Xi\to\Xi$ is open, $\si\in\Si$ is almost periodic if and only if its orbit closure $\overline{\mathfrak O}_\si$ is minimal and compact.
\end{enumerate}
\end{thm}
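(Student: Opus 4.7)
For part (i), the forward direction unwinds the definition of syndeticity: writing $\sf K\,\widetilde\Xi_\si^\si=\Xi_{\rho(\si)}$ with $\sf K$ compact, every $\xi\in\Xi_{\rho(\si)}$ factors as $\kappa\alpha$ with $\kappa\in\sf K$ and $\alpha\in\widetilde\Xi_\si^\si$; because $\alpha\bu\si=\si$, we get $\xi\bu\si=\kappa\bu(\alpha\bu\si)=\kappa\bu\si$. Thus $\mathfrak O_\si=(\sf K\cap\Xi_{\rho(\si)})\bu\si$ is the continuous image of a compact set, hence compact. For the converse I would use that $\d$ open forces the orbit map $\phi:\Xi_{\rho(\si)}\to\mathfrak O_\si$, $\xi\mapsto\xi\bu\si$, to be open; combined with $\sigma$-compactness of $\Xi_{\rho(\si)}$ (from LC + 2nd countable) this yields countably many relatively compact open sets $\sf U_n\subset\Xi_{\rho(\si)}$ with $\mathfrak O_\si=\bigcup_n\phi(\sf U_n)$. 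Compactness of $\mathfrak O_\si$ extracts a finite subcover, and $\sf K:=\overline{\sf U_1\cup\cdots\cup\sf U_m}$ is compact. For any $\xi\in\Xi_{\rho(\si)}$, $\phi(\xi)=\phi(\xi')$ for some $\xi'\in\sf K$; since $\r(\xi)=\r(\xi')$, the element $(\xi')^{-1}\xi$ lies in the isotropy group and in $\widetilde\Xi_\si^\si$, so $\xi\in\sf K\,\widetilde\Xi_\si^\si$.

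For part (ii), compactness of $\overline{\mathfrak O}_\si$ uses a neighborhood $U\ni\si$ with $\overline U$ compact (LC of $\Si$) together with $\sf K'\,\widetilde\Xi_\si^U=\Xi_{\rho(\si)}$: the same factorization gives $\mathfrak O_\si\subset\sf K'\bu\overline U$, and the latter is compact as the image of the closed subset $\{(\kappa,\mu)\in\sf K'\times\overline U\mid\d(\kappa)=\rho(\mu)\}$ under the continuous action. For minimality the key claim is that $\si\in\overline{\mathfrak O}_\tau$ for every $\tau\in\overline{\mathfrak O}_\si$: pick a net $\xi_i\bu\si\to\tau$, factor $\xi_i=\kappa_i\alpha_i$ with $\alpha_i\bu\si\in\overline U$, pass to a subnet along which $\kappa_i\to\kappa\in\sf K'$ and $\alpha_i\bu\si\to\mu\in\overline U$; continuity forces $\tau=\kappa\bu\mu$, so $\mu=\kappa^{-1}\bu\tau\in\mathfrak O_\tau\cap\overline U$, and letting $U$ shrink yields $\si\in\overline{\mathfrak O}_\tau$. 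Then any non-empty closed invariant $F\subset\overline{\mathfrak O}_\si$ contains some $\tau$, hence $\mathfrak O_\tau\subset F$, $\overline{\mathfrak O}_\tau\subset F$, and therefore $\si\in F$, $\mathfrak O_\si\subset F$, $\overline{\mathfrak O}_\si\subset F$, proving minimality.

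For the converse of (ii), assume $\overline{\mathfrak O}_\si$ minimal and compact and $\d$ open. Fix an open $U\ni\si$; minimality gives, for every $\tau\in\overline{\mathfrak O}_\si$, some $\eta_\tau\in\Xi_{\rho(\tau)}$ with $\eta_\tau\bu\tau\in U$. Using openness of $\d$, thicken $\eta_\tau$ to a relatively compact open $\sf N_\tau\ni\eta_\tau$ so that $\d(\sf N_\tau)$ is a neighborhood of $\rho(\tau)$; continuity of the action then produces an open $V_\tau\ni\tau$ with the property that every $\tau'\in V_\tau$ admits $\eta\in\sf N_\tau$ with $\eta\bu\tau'\in U$. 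A finite subcover $\{V_{\tau_j}\}$ of $\overline{\mathfrak O}_\si$ and $\sf K_0:=\overline{\bigcup_j\sf N_{\tau_j}}$ furnish the desired syndetic cover: for any $\xi\in\Xi_{\rho(\si)}$, pick $j$ with $\xi\bu\si\in V_{\tau_j}$ and obtain $\eta\in\sf K_0$ with $\eta\bu(\xi\bu\si)\in U$, so $\eta\xi\in\widetilde\Xi_\si^U$ and $\xi\in\sf K_0^{-1}\widetilde\Xi_\si^U$.

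The main obstacle in every direction is the same fiber issue: in (i) reverse and (ii) reverse, the perturbation needed to match source fibers of arrows with $\rho$-values at nearby points is precisely what openness of $\d$ delivers, and without it neither the openness of $\phi$ nor the covering step goes through. A parallel subtlety in (ii) forward is showing that $\overline{\mathfrak O}_\si$ is already invariant (so that speaking of minimality is legitimate); this, too, reduces to propagating a net converging in the $\rho(\si)$-fiber to one whose ranges hit a prescribed source, and is handled by the same compactness-plus-syndeticity argument used for the density step.
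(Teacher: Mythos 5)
This theorem is not proved in the present paper at all: it is quoted verbatim from \cite[Prop.\,6.16, 6.18, Th.\,2.23]{FM}, so there is no internal proof to compare with, and your proposal has to stand on its own. Most of it does: the forward implication in (i), the compactness of $\overline{\mathfrak O}_\si$ in (ii), the density claim ($\tau\in\overline{\mathfrak O}_\si\Rightarrow\si\in\overline{\mathfrak O}_\tau$ via the factorization $\xi_i=\kappa_i\alpha_i$ and compactness of ${\sf K}'$ and $\overline U$), and the converse of (ii) (where you invoke openness of $\d$ exactly where it is needed, to replace $\rho^{-1}(\d({\sf N}_\tau))$-many sources) are correct. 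The serious gap is in the converse of (i). You assert that ``$\d$ open forces the orbit map $\phi:\Xi_{\rho(\si)}\to\mathfrak O_\si$ to be open.'' This implication is false as stated: for a group $\G$ (a groupoid over one unit) the source map is automatically open, yet the orbit map onto a dense orbit (e.g.\ an irrational line in $\mathbb T^2$) is not open onto its image. What is true, and what this direction really rests on, is an Effros-type open-mapping statement: under local compactness, second countability and openness of $\d$, and \emph{because the orbit is compact (hence Baire)}, some $\phi(\overline{{\sf U}_n}\cap\Xi_{\rho(\si)})$ has nonempty interior in $\mathfrak O_\si$, and one then covers $\mathfrak O_\si$ by finitely many translates of that piece, using that ${\sf A}\bu M$ is open for ${\sf A},M$ open (Remark \ref{caofi}, valid precisely because $\Xi$ is open). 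Your reduction to a finite subcover and the factorization $\xi=\xi'\big((\xi')^{-1}\xi\big)$ with $(\xi')^{-1}\xi\in\widetilde\Xi_\si^\si$ is fine, but the openness (or at least the ``images of compacta cover with interiors'') step is the crux of this direction and is missing, being attributed to a hypothesis that does not deliver it by itself.

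A second, smaller issue is the invariance of $\overline{\mathfrak O}_\si$ in (ii), which you flag but then dismiss as ``handled by the same compactness-plus-syndeticity argument.'' It is not, at least not as described: to get $\eta\bu\tau\in\overline{\mathfrak O}_\si$ from a net $\xi_i\bu\si\to\tau$ one must match $\d(\eta)$ with the ranges $\r(\xi_i)$, which only agree in the limit; this composability mismatch is exactly the pathology that openness of $\d$ is normally used to repair (Fell's criterion), and syndeticity of $\widetilde\Xi_\si^U$ does not obviously substitute for it --- your factorization only rewrites $\eta\bu\tau=(\eta\kappa)\bu\mu$ and reproduces the same problem. Since the paper's definition of minimality is for closed \emph{invariant} sets, this point needs a genuine argument (or a precise appeal to what \cite{FM} actually proves), not a remark. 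Note that your chain ``$F$ closed invariant, $\tau\in F\Rightarrow\overline{\mathfrak O}_\tau\subset F\Rightarrow\si\in F\Rightarrow\overline{\mathfrak O}_\si\subset F$'' is itself sound, because there invariance is assumed of $F$; so what you have proved is compactness plus absence of proper nonempty closed invariant subsets, which falls short of the quoted statement only at this invariance point.
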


\subsection{Morphisms of groupoid actions}\label{goam}

\begin{defn}\label{sprevest}
Let $(\Xi,\rho,\th,\Si)\,,(\Xi',\rho',\th',\Si')$ be two groupoid dynamical systems. {\it A morphism of groupoid actions} is a pair $(\Psi,f)$\,, where $\Psi:\Xi\to\Xi'$ a continuous groupoid morphism (or continuous functor, if one prefers the category point of view) and $f:\Si\to\Si'$ is a continuous function, such that the next diagram commutes
\begin{equation*}\label{coliflor}
\begin{diagram}
\node{\Xi}\arrow{s,l}{\Psi}\arrow{e,b}{\d}\arrow{e,t}{\r}\node{X}\arrow{s,l}{\psi}\node{\Si}\arrow{w,t}{\rho}\arrow{s,l}{f}\\ 
\node{\Xi'}\arrow{e,t}{\r'}\arrow{e,b}{\d'}\node{X'}\node{\Si'}\arrow{w,t}{\rho'}
\end{diagram}
\end{equation*}
and such that
\begin{equation}\label{ciudata}
f\big[\th_{\xi}(\si)\big]=\th'_{\Psi(\xi)}\big[f(\si)\big]\,,\quad{\rm if}\ \ \d(\xi)=\rho(\si)\ (\textup{which implies}\ \d'[\Psi(\xi)]=\rho'\big[f(\si)\big])\,.
\end{equation}
The object map $\psi$ is the restriction of the arrow map $\Psi$. If both $\Psi$ and $f$ are surjective, we say that $(\Psi,f)$ is {\it an epimorphism}, $(\Xi',\rho',\th',\Si')$ is {\it a factor} of $(\Xi,\rho,\th,\Si)$ and $(\Xi,\rho,\th,\Si)$ is {\it an extension} of $(\Xi',\rho',\th',\Si')$\,. 
\end{defn}

The relation \eqref{ciudata} may be written as 
\begin{equation*}\label{ciudoasa}
f\circ\th_{\xi}=\th'_{\Psi(\xi)}\!\circ f\,,\quad{\rm on}\ \ \Si_{\d(\xi)}:=\rho^{-1}\big(\{\d(\xi)\})\,,
\end{equation*}
or even as
\begin{equation}\label{ciudatta}
f(\xi\bu\si)=\Psi(\xi)\bu'\!f(\si)\,,\quad{\rm if}\ \ \d(\xi)=\rho(\si)\,.
\end{equation}

\begin{ex}\label{startlet}
Each topological groupoid $\Xi$ acts continuously in a canonical way on its unit space $X$, with $\Si=X$ and $\rho={\rm id}_X$. The element $\xi$ sends $\d(\xi)$ into $\r(\xi)$\,, which can also be written (with the special notation) as $\xi\ast x:=\xi x\xi^{-1}$ whenever $\d(\xi)=x$\,. In this case, if $M,N\subset\Si$\,, one has $\widetilde\Xi_M^N=\Xi_M^N$\,, using the notation \eqref{faneaka}.
For every continuous groupoid action $(\Xi,\rho,\theta, \Si)$\,, there exists a unique epimorphism onto the canonical action of $\Xi$\,, of the form $({\rm id}_\Xi,\rho)$, so we say that {\it the canonical action is terminal}.
In addition, there is a one-to-one correspondence between continuous (groupoid) morphisms $\Psi:\Xi\to\Xi'$ and morphisms $(\Psi,f)$ from the terminal action of $\,\Xi$ on the terminal action of $\,\Xi'$. (One must have $f=\psi=\Psi|_X$.)
\end{ex}

\begin{ex}\label{novotel}
The subset $\Delta$ of the topological groupoid $\Xi$ is called a {\it subgroupoid} if for every $(\xi,\eta)\in(\Delta\!\times\!\Delta)\cap\Xi^{(2)}$ one has $\xi\eta\in\Delta$\,and $\xi^{-1}\in\Delta$\,. This subgroupoid is {\it wide} if $\Delta^{(0)}=\Xi^{(0)}$. Let $(\Xi,\rho,\th,\Si)$ be a continuous groupoid action and $\Delta$ a wide subgroupoid of $\Xi$\,. The restricted action is defined by keeping the same anchor map $\rho$ and restricting the map $\th$ to
\begin{equation*}\label{prinparti}
\Delta\!\Join\!\Si=\{(\xi,\si)\in\Delta\!\times\!\Si\!\mid\!\d(\xi)=\rho(\si)\}=\{(\xi,\si)\in\Xi\!\Join\!\Si\!\mid\!\xi\in\Delta\}\,.
\end{equation*}
Every wide subgroupoid gives raise to a natural morphism $(\iota,{\rm id}_\Si)$ given by the inclusion $\iota:\Delta\to\Xi$\,. 
\end{ex}

\begin{ex}\label{homoconstruction}
Let $(\Xi',\rho',\th',\Si')$ be an action. Suppose that $\Xi$ is a topological groupoid with unit space $X$ and $\Psi:\Xi\to\Xi'$ is a continuous morphism such that the restriction $\psi:X\to X'$ is an homeomorphism. Then we can make $\Xi$ act on $\Si'$ by defining  
$$
\rho:=\psi^{-1}\circ\rho'\quad \textup{ and }\quad\th_{\xi}(\si'):=\th'_{\Psi(\xi)}(\si')\,.
$$ 
To justify the second definition, note that 
$$
\d'[\Psi(\xi)]=\rho'(\si')\,\Leftrightarrow\,\psi[d(\xi)]=\rho'(\si')\,\Leftrightarrow\,d(\xi)=\psi^{-1}\big[\rho'(\si)\big]=\rho(\si')\,.
$$
In this case, $\Psi$ gives birth to a morphism $(\Psi,\psi)$ between the actions $(\Xi,\rho,\th,\Si)$ and $(\Xi',\rho',\th',\Si')$\,. All the needed verifications are straightforward.
\end{ex}

If $(\Psi_1,f_1)$ is a morphism from $(\Xi,\rho,\th,\Si)$ to $(\Xi',\rho',\th',\Si')$ and $(\Psi_2,f_2)$ is a morphism from $(\Xi',\rho',\th',\Si')$ to $(\Xi'',\rho'',\th'',\Si'')$\,, then $(\Psi_2 ,f_2)\circ(\Psi_1 , f_1):=(\Psi_2\circ\Psi_1 ,f_2\circ f_1)$ is a morphism from $(\Xi,\rho,\th,\Si)$ to $(\Xi'',\rho'',\th'',\Si'')$. {\it Actually, with this morphisms, the groupoid actions form a category}. 

\smallskip
At this point it is useful to show the effect of morphisms on recurrence sets; this will be used in subsequent sections in a more sophisticated setting. The result is already a small extension of \cite[Lemma\,7.5]{FM}.

\begin{prop}\label{label}
 Let $(\Psi,f)$ be a morphism between the actions $(\Xi,\rho,\th,\Si)$ and $(\Xi',\rho',\th',\Si')$\,. If $M,N\subset\Si$ then 
 \begin{equation}\label{nebazam}
 \Psi\big(\widetilde\Xi_{M}^{N}\big)\subset\big(\widetilde{\Xi}'\big)_{f(M)}^{ f(N)}\,.
 \end{equation}
 If $\,\Psi$ is surjective and $\psi,f$ are injective, in \eqref{nebazam} one obtains the equality. 
 \end{prop}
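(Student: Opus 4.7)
The plan is to unwind Definition \ref{giudat} directly and push the witnesses through the equivariance identity \eqref{ciudatta}, using the commutative diagram of Definition \ref{sprevest} to match anchor maps. The forward inclusion should need only the morphism axioms, while the equality will be where the extra hypotheses (surjectivity of $\Psi$ and injectivity of $\psi,f$) enter; each of these three hypotheses should play a distinct role.

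For the inclusion in \eqref{nebazam}, I take $\xi\in\widetilde\Xi_M^N$. By definition, there exists $\si\in M$ with $\d(\xi)=\rho(\si)$ and $\xi\bu\si\in N$. Setting $\si':=f(\si)\in f(M)$, the commutativity of the diagram gives $\d'\big(\Psi(\xi)\big)=\psi\big(\d(\xi)\big)=\psi\big(\rho(\si)\big)=\rho'(\si')$, so the pair $(\Psi(\xi),\si')$ is composable for $\th'$. Equivariance \eqref{ciudatta} then yields $\Psi(\xi)\bu'\si'=f(\xi\bu\si)\in f(N)$, so that $\Psi(\xi)\in\big(\widetilde\Xi'\big)_{f(M)}^{f(N)}$.

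For the reverse inclusion, let $\xi'\in\big(\widetilde\Xi'\big)_{f(M)}^{f(N)}$. Surjectivity of $\Psi$ provides $\xi\in\Xi$ with $\Psi(\xi)=\xi'$, and the recurrence condition gives $\si\in M$ and $\tau\in N$ with $\d'(\xi')=\rho'(f(\si))$ and $\xi'\bu'f(\si)=f(\tau)$. Injectivity of $\psi$ is used to lift the composability relation: from $\psi(\d(\xi))=\d'(\xi')=\rho'(f(\si))=\psi(\rho(\si))$ one infers $\d(\xi)=\rho(\si)$, so $\xi\bu\si$ makes sense. Applying \eqref{ciudatta} again gives $f(\xi\bu\si)=\Psi(\xi)\bu'f(\si)=f(\tau)$, and injectivity of $f$ then forces $\xi\bu\si=\tau\in N$. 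Hence $\xi\in\widetilde\Xi_M^N$ and $\xi'=\Psi(\xi)\in\Psi\big(\widetilde\Xi_M^N\big)$.

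The main subtlety is that the recurrence condition on the $\Xi'$-side only tells us $(\xi'\bu'f(M))\cap f(N)\ne\emptyset$, which a priori witnesses elements of $f(N)$ that need not come from the specific $\si$ used as a preimage in $f(M)$, and may come from several points of $N$; the injectivity assumptions are precisely what is needed to promote these set-level incidences to point-level equalities (first of units, then of orbit points), and surjectivity of $\Psi$ is what allows us to find a preimage of $\xi'$ to begin with. No further hypothesis, and in particular no openness of the source map, is required.
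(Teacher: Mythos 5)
Your proof is correct and follows essentially the same route as the paper's: the forward inclusion by pushing the witness $\si$ through the commutative diagram and the equivariance \eqref{ciudatta}, and the reverse inclusion by lifting $\xi'$ via surjectivity of $\Psi$, recovering composability from injectivity of $\psi$, and promoting $f(\xi\bu\si)\in f(N)$ to $\xi\bu\si\in N$ via injectivity of $f$. Your explicit bookkeeping of the witness $\tau\in N$ only makes the last step slightly more detailed than the paper's wording; there is no substantive difference.
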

 
 \begin{proof}
 Let $\xi\in \widetilde\Xi_{M}^{N}$ and $\si\in M$ such that $\d(\xi)=\rho(\si)$ and $\xi\bu\si\in N$. Then 
 $$
 \d'[\Psi(\xi)]=\psi[\d(\xi)]=\psi[\rho(\si)]=\rho'[f(\si)]
$$ 
and
 $$
 \Psi(\xi)\bu'f(\si)=f(\xi\bu\si)\in f(N)\,,
 $$ 
which shows \eqref{nebazam}. On the other hand, pick $\big(\xi',f(\si)\big)\in\Xi'\Join f(M)$ such that $\xi'\bu'f(\si)\in f(N)$\,. If $\Psi$ is surjective, this $\xi'\in\big(\widetilde{\Xi}'\big)_{f(M)}^{f(N)}$ can be written as $\xi'=\Psi(\xi)$\,. Then 
$$
\psi[\d(\xi)]=\d'[\Psi(\xi)]=\d'(\xi')=\rho'[f(\si)]=\psi[\rho(\si)]\,.
$$ 
By injectivity of $\psi$ one gets $\d(\xi)=\rho(\si)$\,, so 
$$
\xi'\bu'f(\si)=\Psi(\xi)\bu'f(\si)=f(\xi\bu\si)\in f(N)\,.
$$ 
By injectivity of $f$ this implies that $\xi\bu\si\in N$, implying in its turn that $\xi\in\widetilde\Xi_{M}^{N}$\,, so the equality in \eqref{nebazam} follows.
 \end{proof}
 
For this type of morphisms everything that we proved in \cite[Subsect.\,7.1]{FM} holds, with the appropriate modifications. But by Example \ref{furnal}, the general vague morphisms of actions from Subsection \ref{sucritor} extend considerably the present concept, so the results from Subsection \ref{suscritor} largely cover this.

\section{Generalized vague morphisms}\label{supritor}

\subsection{Generalized vague morphisms of groupoid actions}\label{sucritor2}

Let $A$ be a topological space, $\Xi$ a topological groupoid with unit space $X$ and $\pi:A\to X$ a continuous surjection. One defines {\it the pull-back groupoid} \cite{Mac,MZ}
\begin{equation*}\label{labar}
\Xi(\pi,A):=\{(a,\xi,b)\in A\!\times\!\Xi\!\times\!A\!\mid\!\pi(a)=\r(\xi)\,,\,\pi(b)=\d(\xi)\}\,,
\end{equation*}
which is a closed subspace of $A\!\times\!\Xi\!\times\!A$\,, with structural maps
\begin{equation*}\label{lacar}
{\rm D}(a,\xi,b):=b\,,\quad{\rm R}(a,\xi,b):=a\,,
\end{equation*}
\begin{equation*}\label{ladar}
(a,\xi,b)(b,\eta,c):=(a,\xi\eta,c)\,,\quad(a,\xi,b)^{-1}:=\big(b,\xi^{-1},a\big)\,.
\end{equation*}
The unit space is identified to $A$ through $\big(a,\pi(a),a\big)\to a$\,. It is easy to check that the restriction of the second projection
\begin{equation*}\label{lafar}
\pi_*\equiv\Pi:\Xi(\pi,A)\to\Xi\,,\quad \Pi(a,\xi,b):=\xi
\end{equation*}
is a groupoid morphism. 

\smallskip
The next result will be needed in Subsection \ref{suscritor}.

\begin{lem}\label{joser}
If $\,\pi:A\to X$ is proper, $\Pi:\Xi(\pi,A)\to\Xi$ is also a proper map.
\end{lem}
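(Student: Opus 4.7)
The plan is to show that the preimage under $\Pi$ of an arbitrary compact set is compact; this is the usual formulation of properness and is what the subsequent applications will presumably require. Fix a compact subset ${\sf K}\subset\Xi$. Then
$$
\Pi^{-1}({\sf K})=\Xi(\pi,A)\cap\big(A\times{\sf K}\times A\big)\subset\pi^{-1}\big(\r({\sf K})\big)\times{\sf K}\times\pi^{-1}\big(\d({\sf K})\big)\,,
$$
where the inclusion uses the defining relations $\pi(a)=\r(\xi)$ and $\pi(b)=\d(\xi)$ in the pull-back groupoid.

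The main step is to argue that the larger set on the right is compact. By continuity of $\r$ and $\d$, both $\r({\sf K})$ and $\d({\sf K})$ are compact subsets of $X$. The properness hypothesis on $\pi$ then guarantees that $\pi^{-1}\big(\r({\sf K})\big)$ and $\pi^{-1}\big(\d({\sf K})\big)$ are compact in $A$. The product of these three compact sets is compact by Tychonoff.

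To conclude, I would observe that $\Pi^{-1}({\sf K})$ is closed in $A\times\Xi\times A$: indeed $\Xi(\pi,A)$ is a closed subspace of $A\times\Xi\times A$ (as already noted in the definition), and $A\times{\sf K}\times A$ is closed because ${\sf K}$ is. Hence $\Pi^{-1}({\sf K})$ is a closed subset of a compact Hausdorff (or merely compact) set, and therefore compact.

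I do not foresee any real obstacle here; the only subtle point is a definitional one, namely which version of ``proper'' the authors adopt. If they mean ``preimage of compact is compact'', the argument above is complete. If instead they work with the Bourbaki notion (universally closed, or closed with compact fibres), the same inclusion together with the product characterization of properness still yields the conclusion, since the projection $A\times\Xi\times A\to\Xi$ is proper along a proper factor and one restricts to the closed subspace $\Xi(\pi,A)$.
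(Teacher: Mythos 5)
Your argument is correct and is essentially the paper's own proof: the same inclusion $\Pi^{-1}({\sf K})\subset\pi^{-1}[\r({\sf K})]\times{\sf K}\times\pi^{-1}[\d({\sf K})]$, compactness of the box from continuity of $\r,\d$ and properness of $\pi$, and the closedness of $\Xi(\pi,A)$ to conclude (the paper leaves this last step implicit with "this is enough"). No issues.
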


\begin{proof}
If ${\sf K}\subset\Xi$ is compact, then
$$
\begin{aligned}
\Pi^{-1}({\sf K})&=\big\{(a,\xi,b)\in A\!\times\!{\sf K}\!\times\!A\,\big\vert\,\pi(a)=\r(\xi)\,,\,\pi(b)=\d(\xi)\big\}\\
&\subset\pi^{-1}[\r({\sf K})]\!\times\!{\sf K}\!\times\!\pi^{-1}[\d({\sf K})]\,.
\end{aligned}
$$
The last set is compact in $A\!\times\!\Xi\!\times\!A$\,, since $\r,\d$ are continuous and $\pi$ is proper. This is enough to prove the statement.
\end{proof}

If $\Xi$ acts on a topological space $\Si$\,, this action induces an action of $\Xi(\pi,A)$ in a natural manner; this will pave the way towards generalized vague morphisms of actions:

\begin{prop}\label{laker}
Suppose that $\Th:=(\Xi,\rho,\bu,\Si)$ is a groupoid dynamical system and that $\pi:A\to X$ is a continuous surjection. Let us set 
$$
\Th(\pi,A):=\big(\Xi(\pi,A),{\rm pr}_2,\tilde\bu,\Si\!\Join\! A\big)\,,
$$ 
where
\begin{equation*}\label{lagar}
\Si\!\Join\!A:=\{(\si,a)\in\Si\!\times\!A\!\mid\rho(\si)=\pi(a)\}\overset{\rm pr_2}{\longrightarrow}A\,,
\end{equation*}
\begin{equation}\label{joagar}
(a,\xi,b)\tilde\bu(\si,b):=(\xi\bu\si,a)\,.
\end{equation}
Then $\Th(\pi,A)$ is a new groupoid action. Denoting by ${\rm pr}_1$ the restriction of the first projection to $\Si\!\Join\! A$\,, 
\begin{equation*}\label{lahar}
(\Pi,{\rm pr}_1):\big(\Xi(\pi,A),{\rm pr}_2,\tilde\bu,\Si\!\Join\! A\big)\to(\Xi,\rho,\bu,\Si)
\end{equation*}
defines an epimorphism of actions.
\end{prop}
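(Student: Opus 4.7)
My plan is to verify first that the quadruple $\Th(\pi,A)$ satisfies the axioms of Definition~\ref{grupact}, and then to check that $(\Pi,{\rm pr}_1)$ fulfils Definition~\ref{sprevest} together with surjectivity. Everything should fall out of unpacking the definitions.

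First I would confirm that ${\rm pr}_2:\Si\!\Join\!A\to A$ is a continuous surjection; surjectivity uses that both $\pi$ and $\rho$ are surjective, so for each $a\in A$ one finds $\si$ with $\rho(\si)=\pi(a)$\,. The central point is that \eqref{joagar} truly lands in $\Si\!\Join\!A$: since $\rho(\xi\bu\si)=\r(\xi)$ (a standard feature of left groupoid actions) and $\pi(a)=\r(\xi)$ by the definition of $\Xi(\pi,A)$\,, one obtains $(\xi\bu\si,a)\in\Si\!\Join\!A$\,. Continuity of $\tilde\bu$ is then inherited from continuity of $\bu$ together with the projections.

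Next I would check the two action axioms. For unitality, the unit at $(\si,a)$ is $(a,\pi(a),a)$ (under the identification $A\simeq\Xi(\pi,A)^{(0)}$), and
$$
(a,\pi(a),a)\,\tilde\bu\,(\si,a)=(\pi(a)\bu\si,a)=(\rho(\si)\bu\si,a)=(\si,a)
$$
by axiom (1) of $\th$ and the defining relation of $\Si\!\Join\!A$\,. For associativity, one verifies
$$
(a,\xi,b)\,\tilde\bu\,[(b,\eta,c)\,\tilde\bu\,(\si,c)]=(\xi\bu(\eta\bu\si),a)=((\xi\eta)\bu\si,a)=[(a,\xi,b)(b,\eta,c)]\,\tilde\bu\,(\si,c)\,,
$$
using axiom (2) for $\th$\,; at each step the ambient fibred-product constraints are preserved via the same $\rho(\eta\bu\si)=\r(\eta)=\pi(b)$ trick used above.

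For the epimorphism statement, I would observe that the restriction of $\Pi$ to $\Xi(\pi,A)^{(0)}\simeq A$ is precisely $\pi$\,, so the commutativity of the diagram with $\d,\r$ is built into the definition of $\Xi(\pi,A)$\,, while $\rho\circ{\rm pr}_1=\pi\circ{\rm pr}_2$ on $\Si\!\Join\!A$ is the defining equation of the fibred product. Equivariance is tautological, namely
$$
{\rm pr}_1\bigl[(a,\xi,b)\,\tilde\bu\,(\si,b)\bigr]=\xi\bu\si=\Pi(a,\xi,b)\bu\,{\rm pr}_1(\si,b)\,.
$$
Surjectivity of $\Pi$ and ${\rm pr}_1$ uses surjectivity of $\pi$ and $\rho$: any $\xi\in\Xi$ lifts to $(a,\xi,b)$ by choosing $\pi$-preimages of $\r(\xi)$ and $\d(\xi)$\,, and any $\si\in\Si$ lifts to $(\si,a)$ by choosing a $\pi$-preimage of $\rho(\si)$\,. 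The only real obstacle is notational bookkeeping --- keeping track of which element sits in which fibred product --- but no genuine difficulty arises, because $\Xi(\pi,A)$ and $\Si\!\Join\!A$ were designed precisely so that these compatibilities hold automatically.
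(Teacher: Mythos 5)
Your proof is correct and follows essentially the same route as the paper: a direct verification of the two action axioms for $\tilde\bu$ followed by checking the morphism conditions and surjectivity for $(\Pi,{\rm pr}_1)$, which the paper dismisses as straightforward. You even supply a detail the paper leaves implicit, namely that $(\xi\bu\si,a)$ indeed lies in $\Si\!\Join\!A$ because $\rho(\xi\bu\si)=\r(\xi)=\pi(a)$; the only quibble is that surjectivity of ${\rm pr}_2$ needs only the surjectivity of $\rho$, not of $\pi$.
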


\begin{proof}
Surjectivity of the restriction ${\rm pr}_2:\Si\!\Join\! A\to A$ is a consequence of the surjectivity of the first anchor map $\rho$\,. Its continuity is clear. If in \eqref{joagar} $(\si,b)\in\Si\!\Join\! A$ and $(a,\xi,b)\in\Xi(\pi,A)$\,, then $\d(\xi)=\pi(b)=\rho(\si)$ and so $\xi\bu\si$ makes sense. Since an identification is involved, we check the first axiom of the action:
$$
{\rm pr}_2(\si,a)\tilde\bu(\si,a)=a\tilde\bu(\si,a)\equiv(a,\pi(a),a)\tilde\bu(\si,a)=\big(\pi(a)\bu\si,a\big)=\big(\rho(\si)\bu\si,a\big)=(\si,a).
$$
The second axiom follows essentially from the computations
$$
\big[(a,\xi,b)(b,\eta,c)\big]\tilde\bu(\si,c)=(a,\xi\eta,c)\tilde\bu(\si,c)=\big((\xi\eta)\bu\si,a\big)\,,
$$
$$
(a,\xi,b)\tilde\bu\big[(b,\eta,c)\tilde\bu(\si,c)\big]=(a,\xi,b)\tilde\bu\big(\eta\bu\si,b\big)=\big(\xi\bu(\eta\bu\si),a\big)\,,
$$
and the fact that $\bu$ itself is an action. Checking the properties of $(\Pi,{\rm pr}_1)$ is straightforward.
\end{proof}

\begin{ex}\label{saex}
If one starts, as in Example \ref{startlet}, with the canonical action $\bu\equiv\ast$ of $\Xi$ on its unit space $\Si\equiv X$, then $X\!\Join\! A=\{(\pi(a),a)\!\mid a\in A\}$\,, interpreted as the graph of $\pi:A\to X$, may be identified with $A$ (through the second projection). The new action reads
$$
(a,\xi,b)\,\tilde\ast\,(\pi(b),b):=(\xi\ast\d(\xi),a)=(\r(\xi),a)=(\pi(a),a)\,,
$$
which can be written simply as
$$
(a,\xi,b)\,\tilde\ast\,b=a\,.
$$
\end{ex}

\begin{ex}\label{myex}
If the groupoid $\Xi\equiv\G$ is a topological group, then $\pi(a)=\e$ for every $a\in A$ and $\G(\pi,A)$ may be identified with $\G\!\times\!(A\!\times\!A)$\,, the product between the group and the obvious pair groupoid. If, in addition, the group $\G$ acts on the topological space $\Si$\,, then $\Si\!\Join\!A=\Si\!\times\! A$ and the action $\tilde\bu$ is the product between the group action on $\Si$ and the canonical action of the pair groupoid on $A$\,.
\end{ex}

\begin{prop}\label{inzbor}
Let $M,N\subset\Si$\,. Then, with respect to the action introduced in Proposition \ref{laker}
\begin{equation}\label{inghesuita}
\widetilde{\Xi(\pi,A)}_{{\rm pr}_1^{-1}(M)}^{{\rm pr}_1^{-1}(N)}=\big\{(a,\xi,b)\in\Xi(\pi,A)\,\big\vert\,\xi\in\widetilde\Xi_M^N\big\}=\Pi^{-1}\big(\widetilde\Xi_M^N\big)\,.
\end{equation}
\end{prop}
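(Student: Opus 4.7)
My plan is to unfold both sides directly from the definitions; this is essentially a bookkeeping exercise, where the only subtlety is checking the fibred-product conditions defining $\Si\!\Join\!A$ at each step.

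First I would rewrite the left-hand side. By Definition \ref{giudat} applied to the action $\Th(\pi,A)$, an element $(a,\xi,b)\in\Xi(\pi,A)$ lies in $\widetilde{\Xi(\pi,A)}_{{\rm pr}_1^{-1}(M)}^{{\rm pr}_1^{-1}(N)}$ if and only if there exists $(\si,b')\in {\rm pr}_1^{-1}(M)$ with ${\rm pr}_2(\si,b')=b'=b={\rm D}(a,\xi,b)$ and $(a,\xi,b)\,\tilde\bu\,(\si,b)\in{\rm pr}_1^{-1}(N)$. Using \eqref{joagar}, this amounts to: there exists $\si\in M$ with $\rho(\si)=\pi(b)=\d(\xi)$ and $\xi\bu\si\in N$. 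This is precisely the condition $\xi\in\widetilde\Xi_M^N$. The second equality in \eqref{inghesuita} is immediate from the definition $\Pi(a,\xi,b)=\xi$.

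For the converse direction of the first equality, I would take $\xi\in\widetilde\Xi_M^N$ and an arbitrary $(a,\xi,b)\in\Pi^{-1}(\xi)$; I need to produce the witness in $\Si\!\Join\!A$. By assumption there is $\si\in M$ with $\d(\xi)=\rho(\si)$ and $\xi\bu\si\in N$. Since $\pi(b)=\d(\xi)=\rho(\si)$, the pair $(\si,b)$ belongs to $\Si\!\Join\!A$, hence to ${\rm pr}_1^{-1}(M)$. Applying \eqref{joagar}, $(a,\xi,b)\,\tilde\bu\,(\si,b)=(\xi\bu\si,a)$, and this lies in $\Si\!\Join\!A$ because $\rho(\xi\bu\si)=\r(\xi)=\pi(a)$ (using the compatibility of $\rho$ with the action and the definition of $\Xi(\pi,A)$). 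Since $\xi\bu\si\in N$, we conclude $(\xi\bu\si,a)\in{\rm pr}_1^{-1}(N)$, so $(a,\xi,b)$ belongs to the recurrence set.

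I do not anticipate any real obstacle: the whole content is verifying that the fibred-product conditions $\pi(b)=\d(\xi)$ and $\pi(a)=\r(\xi)$ built into $\Xi(\pi,A)$ match up correctly with the anchor condition $\rho(\si)=\d(\xi)$ needed to apply $\xi$ to $\si$, and that $\r(\xi)=\pi(a)$ guarantees the image $(\xi\bu\si,a)$ remains in $\Si\!\Join\!A$. Once these checks are made on both sides, the two descriptions of the recurrence set coincide and the statement follows.
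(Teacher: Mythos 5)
Your proposal is correct and follows essentially the same route as the paper: a direct unfolding of Definition \ref{giudat} for the pullback action, matching the fibred-product condition $\pi(b)=\d(\xi)=\rho(\si)$ with the anchor condition and using \eqref{joagar}, which the paper presents as one chain of equivalences rather than two inclusions. The extra check that $(\xi\bu\si,a)\in\Si\!\Join\!A$ via $\rho(\xi\bu\si)=\r(\xi)=\pi(a)$ is a harmless (and welcome) explicit verification of what the paper leaves implicit in Proposition \ref{laker}.
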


\begin{proof}
Let $(a,\xi,b)\in\Xi(\pi,A)$\,, i.e.
$$
a,b\in A\,, \xi\in\Xi\,,\ \,{\rm with}\,\ \r(\xi)=\pi(a)\,,\,\d(\xi)=\pi(b)\,.
$$
Setting $\mathbf M:={\rm pr}_1^{-1}(M)$ and $\mathbf N:={\rm pr}_1^{-1}(N)$ one has
$$
\begin{aligned}
(a,\xi,b)\in\widetilde{\Xi(\pi,A)}_\mathbf M^\mathbf N&\,\Leftrightarrow\,\exists\,\si\in\Si\,,\,\d(\xi)=\rho(\si)=\pi(b)\,,\,(\si,b)\in\mathbf M\,,(a,\xi,b)\tilde\bu(\si,b):=(\xi\bu\si,a)\in\mathbf N\\
&\,\Leftrightarrow\,\exists\,\si\in M,\,\d(\xi)=\rho(\si)=\pi(b)\,,\,\xi\bu\si\in N\\
&\,\Leftrightarrow\,\xi\in\widetilde\Xi_M^N\,,
\end{aligned}
$$
and the result follows.
\end{proof}

\begin{defn}\label{meyer}
Let $\Xi,\Xi'$ two topological groupoids with unit spaces $X,X'$, respectively. A {\it generalized vague morphism} $\big((A,\pi),(A',\pi'),(\Gamma,\gamma)\big)$ between them is composed of two continuous surjections $\pi:A\to X$\,, $\pi':A'\to X'$\,, a continuous map $\gamma:A\to A'$ and a morphism $\Gamma:\Xi(\pi,A)\to\Xi'(\pi',A')$ such that its restriction to the unit spaces coincides with $\gamma$\,.
\end{defn}

For the last requirement in the definition, we took into consideration the identification of $A$ with $\Xi(\pi,A)^{(0)}$ and of $A'$ with $\Xi'(\pi',A')^{(0)}$, respectively. If not, the condition would be
$$
\Gamma(a,\pi(a),a)=\big(\gamma(a),\pi'(\gamma(a)),\gamma(a))\big)\,,\quad\forall\,a\in A\,.
$$

\begin{rem}\label{pisica}
Using the identities ${\rm D}'\circ\Gamma=\gamma\circ{\rm D}$ and ${\rm R}'\circ\Gamma=\gamma\circ{\rm R}$\,, one sees immediately that 
$$
\Gamma(a,\xi,b)=\big(\gamma(a),\Gamma_2(a,\xi,b),\gamma(b)\big)\,,
$$ 
 where $\Gamma_2:\Xi(\pi,A)\to\Xi'$ has to satisfy
\begin{equation*}\label{spotify}
\Gamma_2(a,\xi\eta,c)=\Gamma_2(a,\xi,b)\Gamma_2(b,\eta,c)\quad{\rm if}\ \r(\xi)=\pi(a)\,,\,\d(\xi)=\pi(b)=\r(\eta)\,,\,\d(\eta)=\pi(c)\,,
\end{equation*}
so $\Gamma_2$ is just a morphism from $\Xi(\pi,A)$ to $\Xi'$.
\end{rem}

\begin{ex}\label{germchin}
In \cite[Subsect.\,3.7]{MZ} only appears the case in which $A=A'$, $\gamma={\rm id}_A$ and $\Gamma$ is an isomorphism; then $(A,\pi,\pi',\Gamma)$ is called {\it a vague isomorphism}. On the other hand, {\it the vague functors} of \cite[Subsect.\,3.6]{MZ} essentially correspond to the case $A'=X'$ and $\pi'={\rm id}_{X'}$\,; then $\Xi'(\pi',A')$ may be identified with $\Xi'$, as explained in  Example \ref{furnal}.
\end{ex}

We upgrade the previous definition to the level of groupoid actions.

\begin{defn}\label{memeyer}
Let $\Th:=(\Xi,\rho,\bu,\Si)$ and $\Th':=\big(\Xi',\rho',\bu',\Si'\big)$ two continuous groupoid actions. A {\it generalized vague morphism of actions} 
$$
\Upsilon:=\big((A,\pi),(A',\pi'),(\Gamma,\gamma),h\big)
$$ 
between them is composed of a generalized vague morphism $\big((A,\pi),(A',\pi'),(\Gamma,\gamma)\big)$ from $\Xi$ to $\Xi'$ and a continuous function $h:\Si\to\Si'$, such that
\begin{enumerate}
\item[(i)] $\rho(\si)=\pi(a)\Leftrightarrow\rho'[h(\si)]=\pi'[\gamma(a)]$\,,
\item[(ii)] $(\Gamma,h\times\gamma)$ is a (usual) morphism from $\Th(\pi,A)$ to $\Th'(\pi',A)$\,.
\end{enumerate}
\end{defn}

\begin{rem}\label{marunt}
By (i), the restriction $h\times\gamma:\Si\!\Join\! A\to\Si'\!\Join\! A'$ is well-defined. Condition (i) will also be useful in the proof of Theorem \ref{both}. The condition (ii) means in detail that
\begin{equation}\label{rids}
\big(h(\xi\bu\si),\gamma(a)\big)=\Gamma(a,\xi,b)\tilde\bu'\big(h(\si),\gamma(b)\big)\quad{\rm if}\ \ \pi(a)=\d(\xi)=\rho(\si)\,,\;\pi(b)=\r(\xi)\,.
\end{equation}
Let us decompose $\Gamma(a,\xi,b)=:\big(\gamma(a),\Gamma_2(a,\xi,b),\gamma(b)\big)$\,, as in Remark \ref{pisica}. Then \eqref{rids} is satisfied if and only if, under the stated conditions on $a,b,\xi,\si$\,, one has 
\begin{equation}\label{HOM}
    h(\xi\bu\si)=\Gamma_2(a,\xi,b)\bu' h(\si)\,.
\end{equation}
\end{rem}

The next diagram might help:

\begin{equation*}\label{largedigram}
\begin{diagram}
\node{\Si}\arrow{s,l}{h}\arrow{e,t}{\rho}\node{X}\node{\Xi}\arrow{w,t}{\d}\arrow{w,b}{\r}\node{\Xi(\pi,A)}\arrow{w,t}{\Pi}\arrow{s,l}{\Gamma}\arrow{e,t}{{\rm D}}\arrow{e,b}{{\rm R}}\node{A}\arrow{s,l}{\gamma}\node{\Si\!\Join\!A}\arrow{s,r}{h\times\gamma}\arrow{w,t}{{\rm pr}_2}\\ 
\node{\Si'}\arrow{e,t}{\rho'}\node{X'}\node{\Xi'}\arrow{w,t}{\d'}\arrow{w,b}{\r'}\node{\Xi'(\pi',A')}\arrow{e,t}{{\rm D'}}\arrow{e,b}{{\rm R'}}\arrow{w,t}{\Pi'}\node{A'}\node{\Si'\!\Join\!A'}\arrow{w,t}{{\rm pr}'_2} 
\end{diagram}
\end{equation*}

Note that there are no maps $\Xi\to\Xi'$ or $X\to X'$, but at least $\Gamma_2:=\Pi'\circ\Gamma$ is a groupoid morphism.

\begin{ex}\label{furnal}
Let us explain how one obtains usual morphisms of groupoid actions as particular cases of generalized vague morphisms of groupoid actions. Let $\Th:=(\Xi,\rho,\bu,\Si)$ a groupoid dynamical system. Take $A:=X=\Xi^{(0)}$ and set $\pi:={\rm id}_X$. Then the pull-back groupoid will be isomorphic with $\Xi$ itself
$$
\Xi\big({\rm id}_X,X\big)\ni\big(\r(\xi),\xi,\d(\xi)\big))\overset{\Pi}{\longrightarrow}\xi\in\Xi
$$
and the space
$$
\Si\!\Join\!X=\big\{(\si,x)\,\big\vert\,\rho(\si)={\rm id}_X(x)\big\}=\big\{(\si,\rho(\si))\,\big\vert\,\si\in\Si)\big\}\,,
$$
seen as the graph of $\rho$\,, identifies with $\Si$ through the first projection. Making use of these identifications, the action $\tilde\bu$ reproduces the initial action $\bu$\,. Suppose now that $\Th:=(\Xi,\rho,\bu,\Si)$ and $\Th':=\big(\Xi',\rho',\bu',\Si'\big)$ are two continuous groupoid actions and that we take $A=X,\,\pi={\rm id}_X,\ A'=X',\,\pi'={\rm id}_{X'}$. Then a generalized vague morphisms of actions simply collapses to a usual morphism of groupoid actions $(\Gamma,h)$\,, as in Definition \ref{sprevest}. Implementing the identifications, \eqref{formula} below reduces to \eqref{nebazam}, with modified notations.
\end{ex}

\subsection{Dynamical properties under generalized vague morphisms of actions}\label{suscritor}

In this subsection, $\Upsilon$ will be a generalized vague morphism between the groupoid actions $\Th$ and $\Th'$, with detailed notations as in Definition \ref{memeyer}. One writes $\Upsilon:\Th\rightarrowtail\Th'$\,. We denote by $\mathfrak O_\si$ the orbit of $\si\in\Si$ with respect to $\Th$ and by $\mathfrak O'_{\si'}$ the orbit of $\si'\!\in\Si'$ with respect to $\Th'$. Same type of notations for the limit sets $\mathfrak L_{\si}$ and $\mathfrak L'_{\si'}$\,.

\begin{thm}\label{both}
Let $M,N\subset\Si$\,. Then
\begin{equation}\label{formula}
\Gamma\Big[\Pi^{-1}\big(\widetilde\Xi_M^N\big)\Big]\subset\big(\Pi'\big)^{-1}\Big[\big(\widetilde\Xi'\big)_{h(M)}^{h(N)}\Big]\,.
\end{equation}
If $\,\Gamma$ is surjective and $h,\gamma$ are injective, one has equality.
\end{thm}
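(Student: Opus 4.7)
The plan is to reduce Theorem \ref{both} to the ``usual morphism'' case already handled by Proposition \ref{label}, using the induced actions on pullbacks constructed in Proposition \ref{laker}; the bridge is the description of recurrence sets under $\Pi$ and $\Pi'$ given by Proposition \ref{inzbor}.

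Setting $\mathbf{M}:={\rm pr}_1^{-1}(M)$ and $\mathbf{N}:={\rm pr}_1^{-1}(N)$ inside $\Si\!\Join\! A$, Proposition \ref{inzbor} rewrites
$\Pi^{-1}\big(\widetilde\Xi_M^N\big)=\widetilde{\Xi(\pi,A)}_{\mathbf{M}}^{\mathbf{N}}$ and analogously on the primed side. By clause (ii) of Definition \ref{memeyer}, the pair $(\Gamma,h\times\gamma)$ is a morphism of actions from $\Th(\pi,A)$ to $\Th'(\pi',A')$ in the sense of Definition \ref{sprevest}, so Proposition \ref{label} yields
$$
\Gamma\big[\widetilde{\Xi(\pi,A)}_{\mathbf{M}}^{\mathbf{N}}\big]\subset\widetilde{\Xi'(\pi',A')}_{(h\times\gamma)(\mathbf{M})}^{(h\times\gamma)(\mathbf{N})}.
$$
A direct check from the definition of $h\times\gamma$ gives $(h\times\gamma)({\rm pr}_1^{-1}(M))\subset({\rm pr}_1')^{-1}(h(M))$, and similarly for $N$. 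Since the recurrence set is monotone in both of its subset-subscripts (immediate from Definition \ref{giudat}), combining this with the primed version of Proposition \ref{inzbor} produces the inclusion \eqref{formula}.

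For the equality statement, both links of this chain must be promoted to equalities. Proposition \ref{label} gives equality in the first link provided $\Gamma$ is surjective and both $\gamma$ (the restriction of $\Gamma$ to units, by Definition \ref{meyer}) and $h\times\gamma$ are injective; the latter follows from the injectivity of $h$ and of $\gamma$. For the second link, I would use that $\Gamma$ being surjective forces $\gamma:A\to A'$ to be surjective as well, since a functor maps units onto units, so combined with injectivity $\gamma$ is a bijection. Then a point $(\sigma',a')\in({\rm pr}_1')^{-1}(h(M))$ lifts as $\sigma'=h(\sigma)$ with $\sigma\in M$ (unique by $h$ injective) and $a'=\gamma(a)$ for a unique $a\in A$; the biconditional clause (i) of Definition \ref{memeyer} converts $\rho'(h(\sigma))=\pi'(\gamma(a))$ into $\rho(\sigma)=\pi(a)$, so $(\sigma,a)\in\mathbf{M}$ and $(h\times\gamma)(\sigma,a)=(\sigma',a')$. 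Hence $(h\times\gamma)(\mathbf{M})=({\rm pr}_1')^{-1}(h(M))$, and the whole chain collapses to equalities.

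The main obstacle is precisely this bookkeeping around clause (i) of Definition \ref{memeyer}: the biconditional compatibility between the two pairs of anchors is what makes the lift $(\sigma',a')\mapsto(\sigma,a)$ well-defined, and without it equality could already fail at the level of the induced pullback actions. The remaining steps are a routine unpacking of the functoriality of $\Gamma$ together with the compatibility relation \eqref{HOM} recorded in Remark \ref{marunt}.
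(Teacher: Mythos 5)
Your proposal is correct and follows essentially the same route as the paper: reduce to the pullback actions via Proposition \ref{inzbor}, apply Proposition \ref{label} to the morphism $(\Gamma,h\times\gamma)$, use monotonicity of recurrence sets together with $(h\times\gamma)\big[{\rm pr}_1^{-1}(M)\big]\subset({\rm pr}_1')^{-1}[h(M)]$, and in the equality case upgrade both links exactly as the paper does (surjectivity of $\gamma$ from that of $\Gamma$, injectivity of $h\times\gamma$, and clause (i) of Definition \ref{memeyer} to lift points of $({\rm pr}_1')^{-1}[h(M)]$). Your explicit bookkeeping with clause (i) is just a spelled-out version of the step the paper calls ``one easily gets an equality in \eqref{fiesurjectiva}''.
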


\begin{proof}
By \eqref{inghesuita} we have
\begin{equation}\label{iesuita}
\widetilde{\Xi(\pi,A)}_{{\rm pr}_1^{-1}(M)}^{{\rm pr}_1^{-1}(N)}=\Pi^{-1}\big(\widetilde\Xi_M^N\big)
\end{equation}
and
\begin{equation}\label{iezuita}
\widetilde{\Xi'(\pi',A')}_{({\rm pr}'_1)^{-1}[h(M)]}^{({\rm pr}'_1)^{-1}[h(N)]}=(\Pi')^{-1}\Big[\big(\widetilde\Xi'\big)_{h(M)}^{h(N)}\Big]\,.
\end{equation}
Applying now Proposition \ref{label} to the morphism $\big(\Gamma,h\times\gamma\big)$\,, with ${\rm pr}_1^{-1}(M),{\rm pr}_1^{-1}(N)\subset\Si\Join A$\,, one gets
\begin{equation}\label{slaves}
\Gamma\Big[\widetilde{\Xi(\pi,A)}_{{\rm pr}_1^{-1}(M)}^{{\rm pr}_1^{-1}(N)}\Big]\subset\widetilde{\Xi'(\pi',A)}_{(h\times\gamma)[{\rm pr}_1^{-1}(M)]}^{(h\times\gamma)[{\rm pr}_1^{-1}(N)]}\,.
\end{equation}
But, using condition (ii), we have
\begin{equation}\label{fiesurjectiva}
(h\times\gamma)[{\rm pr}_1^{-1}(M)]\subset({\rm pr}_1')^{-1}[h(M)]\,,
\end{equation}
and similarly with $M$ replaced by $N$. Thus \eqref{slaves} and the obvious monotony of the recurrence sets imply
\begin{equation}\label{slawes}
\Gamma\Big[\widetilde{\Xi(\pi,A)}_{{\rm pr}_1^{-1}(M)}^{{\rm pr}_1^{-1}(N)}\Big]\subset\widetilde{\Xi'(\pi',A')}_{({\rm pr}_1')^{-1}[h(M)]}^{({\rm pr}_1')^{-1}[h(N)]}\,.
\end{equation}
All these result in
$$
\begin{aligned}
\Gamma\Big[\Pi^{-1}\big(\widetilde\Xi_M^N\big)\Big]&\overset{\eqref{iesuita}}{=}\Gamma\big[\widetilde{\Xi(\pi,A)}_{{\rm pr}_1^{-1}(M)}^{{\rm pr}_1^{-1}(N)}\big]\\
&\overset{\eqref{slawes}}{\subset}\widetilde{\Xi'(\pi',A')}_{({\rm pr}_1')^{-1}[h(M)]}^{({\rm pr}_1')^{-1}[h(N]}\\
&\overset{\eqref{iezuita}}{=}(\Pi')^{-1}\Big[\big(\widetilde\Xi'\big)_{h(M)}^{h(N)}\Big]\,.
\end{aligned}
$$
If $\Gamma$ is surjective, so is $\gamma:A\to A'$ and one easily gets an equality in \eqref{fiesurjectiva} (and similar for $N$).
If, in addition, both $h$ and $\gamma$ are injective, then $h\times\gamma$ is injective; by Proposition \ref{label} one will have equality in \eqref{slaves} and thus also in \eqref{slawes} and \eqref{formula}. This finishes the proof.
\end{proof}

\begin{rem}\label{stift}
From \eqref{formula} one deduces
\begin{equation}\label{vormula}
\Pi'\Big\{\Gamma\Big[\Pi^{-1}\big(\widetilde\Xi_M^N\big)\Big]\Big\}\subset\big(\widetilde\Xi'\big)_{h(M)}^{h(N)}
\end{equation}
and
\begin{equation}\label{bormula}
\widetilde\Xi_M^N\subset\Pi\Big(\Gamma^{-1}\Big\{\big(\Pi'\big)^{-1}\Big[\big(\widetilde\Xi'\big)_{h(M)}^{h(N)}\Big]\Big\}\Big)\,.
\end{equation}
Both inclusions become equalities whenever, in addition, $\,\Gamma$ is surjective and $h,\gamma$ are injective. 
\end{rem}

\begin{cor}\label{securinta}
Suppose that $\Upsilon:\Th\rightarrowtail\Th'$ is a generalized vague morphism of actions and that the map $h$ is surjective. If $\,\Th$ is recurrently transitive\,, then $\Th'$ is recurrently transitive.
\end{cor}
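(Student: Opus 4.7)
The plan is to pull the two test open sets in $\Si'$ back to $\Si$ via $h$, apply recurrent transitivity of $\Th$ there, and then transport the resulting non-empty recurrence set to $\Th'$ using Theorem \ref{both}. The surjectivity hypothesis on $h$ will be exactly what makes the pulled-back open sets non-void; nothing more subtle seems to be needed.

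Concretely, I would start with arbitrary open non-void $U',V'\subset\Si'$ and set $U:=h^{-1}(U')$, $V:=h^{-1}(V')$. Continuity of $h$ gives openness, and surjectivity of $h$ gives non-emptiness of $U,V\subset\Si$. Recurrent transitivity of $\Th$ then produces some $\xi\in\widetilde\Xi_U^V$. Next I would observe that the map $\Pi:\Xi(\pi,A)\to\Xi$ is itself surjective: given $\xi\in\Xi$, the surjectivity of $\pi$ supplies $a\in\pi^{-1}(\r(\xi))$ and $b\in\pi^{-1}(\d(\xi))$, so $(a,\xi,b)\in\Pi^{-1}(\xi)$. In particular $\Pi^{-1}(\widetilde\Xi_U^V)\neq\emptyset$.

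Applying Theorem \ref{both} with $M=U,N=V$ yields
\[
\Gamma\bigl[\Pi^{-1}(\widetilde\Xi_U^V)\bigr]\subset(\Pi')^{-1}\bigl[(\widetilde\Xi')_{h(U)}^{h(V)}\bigr],
\]
whose left-hand side is non-empty by the previous step; hence $(\widetilde\Xi')_{h(U)}^{h(V)}\neq\emptyset$. Since $h(U)\subset U'$ and $h(V)\subset V'$, monotonicity of the recurrence sets in both arguments (immediate from the definition of $\widetilde\Xi_M^N$) gives $(\widetilde\Xi')_{h(U)}^{h(V)}\subset(\widetilde\Xi')_{U'}^{V'}$, so the latter is also non-empty. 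This is precisely recurrent transitivity of $\Th'$.

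There is no real obstacle here; the only points worth double-checking are the surjectivity of $\Pi$ (which is built into the pull-back construction via the surjectivity of $\pi$ in Definition \ref{meyer}) and the fact that $h$ being surjective, rather than just continuous, is essential so that $h^{-1}(U')$ and $h^{-1}(V')$ remain non-empty; without this the argument collapses at the first step.
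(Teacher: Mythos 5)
Your argument is correct and follows the paper's own proof almost verbatim: pull back $U',V'$ through $h$, use recurrent transitivity of $\Th$, note the surjectivity of $\Pi$, and transport the non-empty recurrence set via Theorem \ref{both}. The only cosmetic difference is that you finish with $h(h^{-1}(U'))\subset U'$ plus monotonicity of the recurrence sets, where the paper uses the exact equality $h\big[h^{-1}(U')\big]=U'$ coming from surjectivity; both are equally valid.
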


\begin{proof}
\smallskip
Let $\emptyset\ne U',V'\!\subset\Si'$ two open sets. Then, $h$ being surjective and continuous, $h^{-1}(U'),h^{-1}(V')$ are non-void open subsets of $\Si$\,, and $\Th$ being recurrently transitive, $\widetilde\Xi_{h^{-1}(U')}^{h^{-1}(V')}\ne\emptyset$\,. By surjectivity one has $h\big[h^{-1}(U')\big]=U'$ and $h\big[h^{-1}(V')\big]=V'$. Then one applies \eqref{formula} with $M=h^{-1}(U')$ and $N=h^{-1}(V')$ to show that $\big(\widetilde\Xi'\big)_{U'}^{V'}\ne\emptyset$\,; recall that $\Pi$ is surjective.
\end{proof}

\begin{thm}\label{color}
\begin{enumerate}
\item[(i)]If $\si\in\Si$\,, one has
\begin{equation}\label{potroc}
h\big(\mathfrak O_\si\big)\subset\mathfrak O_{h(\si)}'\quad{\rm and}\quad h\big(\overline{\mathfrak O}_\si\big)\subset\overline{\mathfrak O}'_{h(\si)}\,.
\end{equation}
\item[(ii)] If in addition $\Gamma$ is surjective and $h,\gamma$ are injective, then $h(\mathfrak O_\si)=\mathfrak O'_{h(\si)}$\,. In consequence, images through $h$ of $\bu$-invariant sets are $\bu'$-invariant sets.
\item[(iii)] The inverse image through $h$ of an $\bu'$-invariant set is a $\bu$-invariant set.
\end{enumerate}
\end{thm}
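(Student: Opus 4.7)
The plan rests on two ingredients already established in the paper: the equivariance formula \eqref{HOM}, namely $h(\xi\bu\si)=\Gamma_2(a,\xi,b)\bu' h(\si)$ whenever $\pi(a)=\r(\xi)$ and $\pi(b)=\d(\xi)=\rho(\si)$, together with the biconditional condition (i) in Definition \ref{memeyer} linking the anchors via $\pi$ and $\pi'$. Surjectivity of $\pi,\pi'$ guarantees that for any $\xi\in\Xi$ we can pick companions $a,b\in A$ making $(a,\xi,b)$ a legitimate element of the pull-back groupoid, and then \eqref{HOM} transports $\bu$-products to $\bu'$-products.

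For part (i), I take $\tau=\xi\bu\si\in\mathfrak O_\si$ with $\d(\xi)=\rho(\si)$, choose $a\in\pi^{-1}(\r(\xi))$ and $b\in\pi^{-1}(\rho(\si))$ (possible since $\pi$ is surjective), and apply \eqref{HOM} to write $h(\tau)=\Gamma_2(a,\xi,b)\bu' h(\si)$. To conclude $h(\tau)\in\mathfrak O'_{h(\si)}$ I must check that $\Gamma_2(a,\xi,b)$ lies in $\Xi'_{\rho'(h(\si))}$; this comes from $\d'[\Gamma_2(a,\xi,b)]=\pi'[\gamma(b)]$ (since $\Gamma(a,\xi,b)\in\Xi'(\pi',A')$ with second structure map determined by $\gamma(b)$) combined with Definition \ref{memeyer}(i) applied to $\pi(b)=\rho(\si)$, which forces $\pi'[\gamma(b)]=\rho'[h(\si)]$. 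The closure statement is then immediate: continuity of $h$ gives $h(\overline{\mathfrak O}_\si)\subset\overline{h(\mathfrak O_\si)}\subset\overline{\mathfrak O}'_{h(\si)}$.

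For part (ii), I need the reverse inclusion $\mathfrak O'_{h(\si)}\subset h(\mathfrak O_\si)$. Given $\tau'=\xi'\bu' h(\si)$ with $\d'(\xi')=\rho'(h(\si))$, fix any $b\in\pi^{-1}(\rho(\si))$ and set $b':=\gamma(b)$; Definition \ref{memeyer}(i) yields $\pi'(b')=\rho'(h(\si))=\d'(\xi')$. Pick $a'\in(\pi')^{-1}(\r'(\xi'))$, so that $(a',\xi',b')\in\Xi'(\pi',A')$. Surjectivity of $\Gamma$ supplies $(a,\xi,\tilde b)\in\Xi(\pi,A)$ with $\Gamma(a,\xi,\tilde b)=(a',\xi',b')$, and the injectivity of $\gamma$ (applied to $\gamma(\tilde b)=b'=\gamma(b)$) forces $\tilde b=b$. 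Then $\d(\xi)=\pi(b)=\rho(\si)$, so $\xi\bu\si\in\mathfrak O_\si$, and \eqref{HOM} gives $h(\xi\bu\si)=\Gamma_2(a,\xi,b)\bu' h(\si)=\xi'\bu' h(\si)=\tau'$. For the invariance consequence, if $M\subset\Si$ is $\bu$-invariant and $\tau'\in h(M)$, write $\tau'=h(\si)$ with $\si\in M$, so $\mathfrak O_\si\subset M$; then for any $\xi'$ composable with $\tau'$, the orbit equality just established places $\xi'\bu'\tau'\in h(\mathfrak O_\si)\subset h(M)$.

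Part (iii) is the easiest and follows directly from \eqref{HOM} without needing surjectivity/injectivity hypotheses: given $\xi\in\Xi$, $\si\in h^{-1}(M')$ with $\d(\xi)=\rho(\si)$, pick any $a\in\pi^{-1}(\r(\xi))$, $b\in\pi^{-1}(\rho(\si))$, and \eqref{HOM} together with Definition \ref{memeyer}(i) yields $h(\xi\bu\si)=\Gamma_2(a,\xi,b)\bu' h(\si)\in M'$ by $\bu'$-invariance of $M'$. The main subtle point of the whole proof is the bookkeeping in (ii): one must choose $b$ \emph{first} from $\pi^{-1}(\rho(\si))$ and then use injectivity of $\gamma$ to guarantee that the $b$-component produced by surjectivity of $\Gamma$ coincides with that choice, so that the resulting $\xi$ in fact satisfies $\d(\xi)=\rho(\si)$ and $\xi\bu\si$ lives in the orbit of $\si$.
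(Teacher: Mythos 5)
Your proof is correct, but it follows a different route from the paper's for parts (i) and (ii). The paper deduces (i) from Theorem \ref{both} (via Proposition \ref{inzbor} and Proposition \ref{label}): $\tau\in\mathfrak O_\si$ iff $\widetilde\Xi_\si^\tau\ne\emptyset$, surjectivity of $\Pi$ makes $\Gamma\big[\Pi^{-1}\big(\widetilde\Xi_\si^\tau\big)\big]$ non-void, and \eqref{formula} forces $\big(\widetilde\Xi'\big)_{h(\si)}^{h(\tau)}\ne\emptyset$; for (ii) it invokes the equality case of \eqref{formula} to get $\si\overset{\bu}{\sim}\tau\Leftrightarrow h(\si)\overset{\bu'}{\sim}h(\tau)$ and concludes that orbits are mapped onto orbits. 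You instead argue directly from the equivariance identity \eqref{HOM} and the component decomposition $\Gamma(a,\xi,b)=\big(\gamma(a),\Gamma_2(a,\xi,b),\gamma(b)\big)$, using surjectivity of $\pi,\pi'$ to supply the companions $a,b$; your usage of the composability conditions ($\pi(a)=\r(\xi)$, $\pi(b)=\d(\xi)=\rho(\si)$) is the one consistent with the definition of $\Xi(\pi,A)$ and of $\tilde\bu$, which is the intended reading of \eqref{rids}. The trade-off: the paper's route recycles the recurrence-set machinery it has already built, whereas your direct construction in (ii) — fixing $b\in\pi^{-1}(\rho(\si))$ first, lifting $(a',\xi',\gamma(b))$ through the surjective $\Gamma$, and using injectivity of $\gamma$ to force the third component back to $b$ — explicitly exhibits a preimage in $\mathfrak O_\si$ for each point of $\mathfrak O'_{h(\si)}$. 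This actually makes transparent a step the paper leaves implicit, since the equivalence $\si\overset{\bu}{\sim}\tau\Leftrightarrow h(\si)\overset{\bu'}{\sim}h(\tau)$ by itself only compares points already in the image of $h$, while the inclusion $\mathfrak O'_{h(\si)}\subset h(\mathfrak O_\si)$ needs precisely the kind of lifting argument you give. Part (iii) coincides with the paper's proof.
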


\begin{proof}
(i) One could prove this starting from the definitions. 
But this is also a consequence of Theorem \ref{both}, since $\si,\tau$ belong to the same orbit if and only the recurrence set corresponding to the sets $M=\{\si\}$ and $N=\{\tau\}$ is non-void. So assume that $\tau\in\mathfrak O_\si$\,, i.e.\;$\,\widetilde\Xi_\si^\tau\ne\emptyset$\,; since $\Pi$ is surjective, one gets $\Gamma\Big[\Pi^{-1}\big(\widetilde\Xi_M^N\big)\Big]\ne\emptyset$\,. By \eqref{formula} this implies $\big(\widetilde\Xi'\big)_{h(\si)}^{h(\tau)}\ne\emptyset$\,, meaning that $h(\si)$ and $h(\tau)$ belong to the same orbit of the action $\Th'$. This proves the first inclusion in  \eqref{potroc}. The second one is an immediate consequence.

\smallskip
(ii) Assuming that $\Gamma$ is surjective and $h,\gamma$ are injective, one has equality in \eqref{formula}. In particular, the two members are simultaneously non-void. From this and the fact that $\Pi,\Pi'$ are surjective, it follows that $\widetilde\Xi_\si^\tau$ and $\big(\widetilde\Xi'\big)_{h(\si)}^{h(\tau)}$ are simultaneously non-void, so  $\si\overset{\bu}{\sim}\tau$ if and only if $h(\si)\overset{\bu'}{\sim}h(\tau)$ and thus orbits are send (bijectively) onto orbits. Now let $M\subset\Si$ be invariant. To show that $h(M)$ is invariant, it is enough to check that for every $\si\in M$ the orbit $\mathfrak O'_{h(\si)}\!=h(\mathfrak O_\si)$ is contained in $h(M)$\,, which is obvious since $\mathfrak O_\si\subset M$.

\smallskip
(iii) Let $B'\subset\Si'$ be $\bu'$-invariant and let $\si\in\Si$ with $h(\si)\in B'$. Pick $\tau=\xi\bu\si$ an element on the orbit of $\si$. Then, by \eqref{HOM}, for some $a\in\pi^{-1}[\r(\xi)]$ and $b\in\pi^{-1}[\d(\xi)]$
$$
h(\tau)=h(\xi\bu\si)=\Gamma_2(a,\xi,b)\bu'h(\si)\in B',
$$
proving that $\tau\in h^{-1}(B')$\,.
\end{proof}

\smallskip
In the next consequence we mention mostly properties which depend directly on the invariant subset structure and maybe also on the topology.

\begin{cor}\label{secinta}
Let $\Upsilon:\Th\rightarrowtail\Th'$ be a generalized vague morphism of actions with $h$ surjective. 
\begin{enumerate}
\item[(i)] If $\,\Th$ is transitive, $\Th'$ is also transitive. If $\,\Th$ is pointwise transitive, $\Th'$ is also pointwise transitive.
\item[(ii)] If $\,\Th$ is weakly pointwise transitive, $\Th'$ is also weakly pointwise transitive.
\item[(iii)] If $\,\Th$ satisfies any of the properties $({\rm TT}_j)$\, with $j\in\{1,2\}$\,, then $\Th'$ also satisfies the same property. 
\item[(iv)] If $\,M\subset\Si$ is minimal and $h(M)\subset\Si'$ is closed, $h(M)$ is also minimal.
\end{enumerate}
\end{cor}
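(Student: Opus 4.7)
All four parts hinge on a uniform pair of tools: the orbit and orbit-closure inclusions $h(\mathfrak{O}_\si)\subset\mathfrak{O}'_{h(\si)}$ and $h(\overline{\mathfrak{O}}_\si)\subset\overline{\mathfrak{O}}'_{h(\si)}$ from Theorem \ref{color}(i), and the fact that preimages under $h$ of $\bu'$-invariant subsets of $\Si'$ are $\bu$-invariant in $\Si$, from Theorem \ref{color}(iii). Surjectivity of $h$ is what converts containments involving $\Si$ into containments involving $\Si'$, and what guarantees that preimages of nonempty/proper subsets remain nonempty/proper.

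For part (i), if $\mathfrak{O}_{\si_0}=\Si$ for some $\si_0$, then $\Si'=h(\Si)=h(\mathfrak{O}_{\si_0})\subset\mathfrak{O}'_{h(\si_0)}$, giving transitivity of $\Th'$; pointwise transitivity follows identically with $\overline{\mathfrak{O}}$ in place of $\mathfrak{O}$. For part (ii), pick $\si_0\in\Si$ with $\mathfrak{C}_{\si_0}=\Si$. The set $h^{-1}(\mathfrak{C}'_{h(\si_0)})$ is closed (continuity of $h$), $\bu$-invariant (Theorem \ref{color}(iii)), and contains $\si_0$, so by minimality of $\mathfrak{C}_{\si_0}$ among closed $\bu$-invariant sets containing $\si_0$ it equals $\Si$; surjectivity of $h$ then yields $\Si'\subset\mathfrak{C}'_{h(\si_0)}$.

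For part (iii), I argue contrapositively. A covering $\Si'=C'_1\cup C'_2$ by proper closed $\bu'$-invariant sets pulls back, via continuity and Theorem \ref{color}(iii), to a covering $\Si=h^{-1}(C'_1)\cup h^{-1}(C'_2)$ by proper closed $\bu$-invariant sets (properness uses surjectivity: $h^{-1}(C'_i)=\Si$ would force $C'_i=h(\Si)=\Si'$), contradicting $({\rm TT}_1)$ for $\Th$. For $({\rm TT}_2)$, a nonempty open $\bu'$-invariant $U'\subset\Si'$ pulls back to the open (continuity), nonempty (surjectivity), $\bu$-invariant set $h^{-1}(U')$, which is therefore dense in $\Si$; applying $h$ and using continuity together with the identity $h(h^{-1}(U'))=U'$ (surjectivity) yields $\Si'=h(\Si)\subset\overline{U'}$.

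Part (iv) is the most delicate. The plan is a preimage argument: given a nonempty closed $\bu'$-invariant subset $N'\subset h(M)$, set $N:=h^{-1}(N')\cap M$. This is closed (intersection of closed sets), $\bu$-invariant (intersection of $h^{-1}(N')$, invariant by Theorem \ref{color}(iii), with the invariant set $M$), and nonempty (any $\si'\in N'\subset h(M)$ lifts to some $\si_0\in M$, which then lies in $N$). Minimality of $M$ forces $N=M$, so $h(M)\subset N'$ and hence $N'=h(M)$, ruling out all proper nonempty closed $\bu'$-invariant subsets of $h(M)$. The main obstacle in (iv) is that, without additional hypotheses on $\Gamma$, one cannot directly push the $\bu$-invariance of $M$ forward to $\bu'$-invariance of $h(M)$—the analogue of Theorem \ref{color}(ii) requires injectivity of $h,\gamma$ together with surjectivity of $\Gamma$—so the preimage technique is essential precisely because it sidesteps the need for a direct $\Xi\to\Xi'$ map.
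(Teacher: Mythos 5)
Your arguments for (i), (ii) and (iii) are correct and essentially coincide with the paper's: (i) is read off from \eqref{potroc}, while (ii) and (iii) pull closed (resp.\ open) invariant sets back through $h$, invoke Theorem \ref{color}(iii), and use surjectivity via $h\big[h^{-1}(\cdot)\big]=\cdot$\,; the paper leaves the $({\rm TT}_1)$ case to the reader, and your contrapositive pull-back settles it correctly.

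Part (iv) is where you diverge, and there is a genuine gap. What your preimage argument proves is that $h(M)$ contains no nonempty proper closed $\bu'$-invariant subset. But minimality in this paper (Definition \ref{acata}) is the requirement that all orbits be dense, and in the general groupoid setting of this paper the two formulations are not interchangeable: closures of orbits and of invariant sets need not be invariant (this is exactly why $\mathfrak C_\si\supset\overline{\mathfrak O}_\si$ is introduced, and why weak pointwise transitivity differs from pointwise transitivity in Remark \ref{prostie}). Even leaning on the parenthetical reformulation in Definition \ref{acata}, the implication you would need --- absence of proper nonempty closed invariant subsets implies density of orbits --- is precisely the direction that fails without openness-type hypotheses on $\d'$. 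Your argument yields no information about the orbits of points of $h(M)$, nor even that $h(M)$ is $\bu'$-invariant, so it does not deliver the conclusion in the form the paper needs; note that Corollary \ref{rolar} feeds (iv) into Theorem \ref{flacara}(ii), which requires minimality of an orbit closure in the orbit-density sense.

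Moreover, the obstacle you cite to justify avoiding a direct argument is a misdiagnosis: no push-forward of invariance (Theorem \ref{color}(ii), with its extra hypotheses) is required. Minimality of $M$ gives $\overline{\mathfrak O}_\si=M$ for every $\si\in M$, so the second inclusion of \eqref{potroc} --- valid under the corollary's hypotheses alone --- yields $h(M)=h\big(\overline{\mathfrak O}_\si\big)\subset\overline{\mathfrak O}'_{h(\si)}$\,. Since every point of $h(M)$ is of the form $h(\si)$ with $\si\in M$, the orbit closure of each point of $h(M)$ contains the closed set $h(M)$, which is exactly the paper's conclusion that the orbit of $h(\si)$ is dense in $h(M)$. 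This one-line push-forward of orbit closures is the paper's proof, and it supplies the density statement that your preimage argument misses.
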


\begin{proof}
(i) is obvious from \eqref{potroc}.

\smallskip
(ii) Let $\si\in\Si$ such that $\mathfrak C_\si=\Si$\,. If $C'$ is a closed invariant set containing $h(\si)\in\Si'$, then $h^{-1}(C')$ is a closed invariant set containing $\si\in\Si$\,. Since $\mathfrak C_\si=\Si$ one has $h^{-1}(C')=\Si$\,; also using the surjectivity of $h$ one gets 
$$
C'=h\big[h^{-1}(C')]=h(\Si)=\Si',
$$
which means that $\Th'$ is weakly pointwise transitive.

\smallskip
(iii) We only treat the case $j=2$\,; the case $j=1$ is left to the reader. So pick a non-void open $\bu'$-invariant subset $U'$ of $\Si'$\,. The subset $h^{-1}(U')$ of $\Si$ is non-void, open and $\bu$-invariant, satisfying $h\big[h^{-1}(U')\big]=U'$ by surjectivity. Since $\Th$ satisfies $({\rm TT}_2)$ $h^{-1}(U')$ is dense, so $U'$ will also be dense. 


\smallskip
(iv) If $M$ is minimal and $\si\in M$ then
$$
h(M)=h\big(\overline{\mathfrak O}_\si\big)\subset\overline{\mathfrak O}'_{h(\si)}\subset\overline{h(M)}=h(M)\,,
$$
so the orbit of $h(\si)$ is dense in $h(M)$\,.
\end{proof}

\begin{rem}\label{cudat}
In \cite[Ex.\,7.9]{FM} it is shown that generally the property $({\rm TT}_3)$ does not transfer to factors, in the simple case of (usual) morphisms of groupoid actions. On the other hand, if the source map of the groupoid is open, the property does transfer even under generalized vague morphisms of actions, by the equivalence mentioned in Remark \ref{prostie}. 
\end{rem}

We present now a result on the lifting of minimality.

\begin{cor}\label{garbanzos}
Let $\Upsilon:\Th\rightarrowtail\Th'$ be a generalized vague morphism of actions with $h$ surjective.
Suppose that $\Si$ is compact (hence $\Si'$ is also compact). If $M'\subset\Si'$ is minimal, there exists $M\subset\Si$ minimal such that $h(M)=M'$. 
\end{cor}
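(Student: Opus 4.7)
The plan is to realize $M$ as a minimal closed $\bu$-invariant subset of $h^{-1}(M')$ by a Zorn's lemma argument, and then to deduce $h(M)=M'$ by combining Corollary \ref{secinta}(iv) with the minimality of $M'$.

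First I would check that $B:=h^{-1}(M')$ is non-empty (since $h$ is surjective and $M'\neq\emptyset$), closed (by continuity of $h$ and closedness of $M'$), $\bu$-invariant (by Theorem \ref{color}(iii)), and compact as a closed subset of $\Si$. In particular, $\Si'=h(\Si)$ is compact, justifying the parenthetical claim. Then I would apply Zorn's lemma to the family $\mathcal F$ of non-empty closed $\bu$-invariant subsets of $B$, partially ordered by reverse inclusion: any chain $\{N_\alpha\}_\alpha\subset\mathcal F$ admits the upper bound $\bigcap_\alpha N_\alpha$, which is closed and $\bu$-invariant (both properties pass to arbitrary intersections) and non-empty by the finite intersection property in the compact space $B$. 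This produces a minimal element $M\in\mathcal F$.

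For the final step, $M$ is compact, so $h(M)$ is a closed subset of $\Si'$, and Corollary \ref{secinta}(iv) may be invoked. Inspecting its proof, one actually obtains the stronger identity $h(M)=\overline{\mathfrak O}'_{h(\si)}$ for any $\si\in M$. Since $M\subset h^{-1}(M')$ we have $h(\si)\in M'$, and the minimality of $M'$ forces $\overline{\mathfrak O}'_{h(\si)}=M'$. Chaining the two equalities yields $h(M)=M'$, as required.

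The step I expect to demand the most care is the Zorn-type construction together with the application of Corollary \ref{secinta}(iv). Definition \ref{acata} presents the minimality condition in two formulations (all orbits dense versus no proper closed $\bu$-invariant subsets) whose equivalence is delicate in the general groupoid setting, where the closure of an orbit need not be invariant. One must therefore either invoke that equivalence explicitly or verify directly that the $M$ produced by Zorn's lemma satisfies the hypothesis of Corollary \ref{secinta}(iv) in the precise form actually used in its proof; once this is done, the chain of identities in the previous paragraph closes the argument.
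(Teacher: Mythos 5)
Your proposal is correct and follows essentially the same route as the paper: form the non-void closed $\bu$-invariant set $h^{-1}(M')$, extract a minimal $M$ inside it by Zorn's lemma (compactness giving the chain condition), and push it forward through $h$, concluding $h(M)=M'$ from the minimality of $M'$. The only cosmetic difference is that you close the argument via the identity $h(M)=\overline{\mathfrak O}'_{h(\si)}$ read off from the proof of Corollary \ref{secinta}(iv), whereas the paper argues that $h(M)$ is a non-void closed $\bu'$-invariant subset of $M'$; your explicit attention to the two formulations of minimality (Zorn-minimal versus all orbits dense) concerns a point the paper passes over silently, and it affects both arguments in the same way.
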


\begin{proof}
The set $h^{-1}(M')$ is non-void closed and $\bu$-invariant. By Zorn's Lemma, it contains a minimal subset $M$. Then $h(M)\subset M'$ is non-void closed and $\bu'$-invariant, so it coincides with $M'$.  
\end{proof}

\begin{cor}\label{gogonata}
Let $\Upsilon:\Th\rightarrowtail\Th'$ be a generalized vague morphism of actions with $\Gamma$ surjective and $h,\gamma$ injective. Also assume that $\Xi'$ is locally compact, second countable, with $\d'\!:\Xi'\to\Xi'$ an open map. If $\si\in\Si$ is periodic, then $h(\si)\in\Si'$ is periodic.
\end{cor}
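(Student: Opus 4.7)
The plan is to reduce periodicity of $h(\si)$ to compactness of its orbit, and transport compactness of the orbit across the morphism. More precisely, I would invoke Theorem~\ref{flacara}(i) at both ends: its unconditional first half on the domain side $\Th$, and its converse (which requires exactly the hypotheses assumed here on $\Xi'$) on the codomain side $\Th'$.

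First, since $\si$ is periodic, the unconditional part of Theorem~\ref{flacara}(i) gives that its orbit $\mathfrak O_\si\subset\Si$ is compact. Next, the injectivity of $h,\gamma$ together with the surjectivity of $\Gamma$ place us in the setting of Theorem~\ref{color}(ii), whence
\[
h(\mathfrak O_\si)=\mathfrak O'_{h(\si)}\,.
\]
Since $h$ is continuous and $\mathfrak O_\si$ is compact, it follows that $\mathfrak O'_{h(\si)}=h(\mathfrak O_\si)$ is compact in $\Si'$.

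Finally, by the hypotheses that $\Xi'$ is locally compact, second countable, and that $\d':\Xi'\to\Xi'$ is open, the converse half of Theorem~\ref{flacara}(i) applies to the action $\Th'$: having a compact orbit characterizes periodicity. Thus $h(\si)\in\Si'_{\rm per}$, as desired.

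The only real subtlety is matching hypotheses: one must check that the assumptions stated in the corollary (surjectivity of $\Gamma$, injectivity of $h$ and $\gamma$) are precisely what Theorem~\ref{color}(ii) needs to upgrade the inclusion $h(\mathfrak O_\si)\subset\mathfrak O'_{h(\si)}$ into an equality. Without that equality one would only get $h(\mathfrak O_\si)$ compact, not necessarily the full orbit of $h(\si)$, and the argument would stall; with it, the conclusion is immediate.
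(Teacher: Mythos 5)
Your proof is correct and follows essentially the same route as the paper: compact orbit of $\si$ via the unconditional part of Theorem~\ref{flacara}(i), transport of the orbit through $h(\mathfrak O_\si)=\mathfrak O'_{h(\si)}$ via Theorem~\ref{color}(ii), and then the converse characterization in Theorem~\ref{flacara}(i) on the $\Th'$ side. Nothing is missing; your hypothesis-matching remark is exactly the point the paper relies on implicitly.
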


\begin{proof}
We use Theorem \ref{color}\,(ii) and Theorem \ref{flacara}\,(i). If $\si$ is periodic, $\mathfrak O_\si$ is compact, so $h(\mathfrak O_\si)=\mathfrak O'_{h(\si)}$ is compact, thus $h(\si)$ is periodic.
\end{proof}

\begin{cor}\label{rolar}
Let $\Upsilon:\Th\rightarrowtail\Th'$ be a generalized vague morphism of actions, where $\Xi\,,\Si\,,\Xi',\Si'$ are locally compact, $\d':\Xi'\to\Xi'$ is open, $\Gamma$ is surjective and $\gamma,h$ are injective. If $\si$ is almost periodic, $h(\si)$ is almost periodic.
\end{cor}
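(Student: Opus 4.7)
The plan is to combine the characterisation of almost periodicity in Theorem \ref{flacara}(ii) with the orbit-transport result Theorem \ref{color} and the minimality transfer from Corollary \ref{secinta}(iv). Concretely, we want to show that the orbit closure $\overline{\mathfrak O}'_{h(\si)}$ is minimal and compact; then the openness of $\d'$ together with local compactness of $\Xi'$ and $\Si'$ will let us invoke the ``if'' direction of Theorem \ref{flacara}(ii) to conclude $h(\si)\in\Si'_{\rm alper}$\,.

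First, since $\si$ is almost periodic and $\Xi,\Si$ are locally compact, the (hypothesis-free) direction of Theorem \ref{flacara}(ii) asserts that $\overline{\mathfrak O}_\si$ is minimal and compact. Because $h$ is continuous, $h(\overline{\mathfrak O}_\si)$ is compact in the (Hausdorff, since locally compact) space $\Si'$, hence closed. Next I identify this compact set with $\overline{\mathfrak O}'_{h(\si)}$\,: by Theorem \ref{color}(i), $h(\overline{\mathfrak O}_\si)\subset\overline{\mathfrak O}'_{h(\si)}$\,, and by Theorem \ref{color}(ii) (applicable since $\Gamma$ is surjective and $h,\gamma$ are injective) we have $\mathfrak O'_{h(\si)}=h(\mathfrak O_\si)\subset h(\overline{\mathfrak O}_\si)$\,. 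Taking closures and using that $h(\overline{\mathfrak O}_\si)$ is already closed yields the reverse inclusion, so $\overline{\mathfrak O}'_{h(\si)}=h(\overline{\mathfrak O}_\si)$\,, which is compact.

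For minimality I apply the argument of Corollary \ref{secinta}(iv) to $M:=\overline{\mathfrak O}_\si$\,: $M$ is minimal and $h(M)$ is closed by the preceding paragraph, so $h(M)=\overline{\mathfrak O}'_{h(\si)}$ is minimal. Finally, the reverse direction of Theorem \ref{flacara}(ii), which needs precisely the standing hypotheses (local compactness of $\Xi',\Si'$ and openness of $\d'$), turns compactness and minimality of $\overline{\mathfrak O}'_{h(\si)}$ into almost periodicity of $h(\si)$\,.

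The only genuinely delicate point is the identity $\overline{\mathfrak O}'_{h(\si)}=h(\overline{\mathfrak O}_\si)$\,; without it one only gets an inclusion between the orbit closures and cannot conclude compactness of $\overline{\mathfrak O}'_{h(\si)}$\,. It is this step that forces the use of the Hausdorffness hidden in the local compactness hypothesis, so that the compact set $h(\overline{\mathfrak O}_\si)$ is automatically closed and thus absorbs $\overline{\mathfrak O}'_{h(\si)}$\,. Everything else is a bookkeeping combination of the previously established results.
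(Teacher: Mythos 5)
Your proof is correct and follows essentially the same route as the paper's: both use the forward direction of Theorem \ref{flacara}(ii) for $\si$, transport minimality and compactness of the orbit closure via Theorem \ref{color}(i)--(ii) and the argument of Corollary \ref{secinta}(iv), identify $h\big(\overline{\mathfrak O}_\si\big)$ with $\overline{\mathfrak O}'_{h(\si)}$, and conclude with the converse direction of Theorem \ref{flacara}(ii). You merely make explicit the details the paper compresses (closedness of the compact image and the two inclusions giving the equality of orbit closures), which is a faithful filling-in rather than a different argument.
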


\begin{proof}
We are going to use Theorem \ref{flacara}\,(ii), Corollary \ref{secinta}\,(iv) and Theorem \ref{color}\,(ii). Since $\si$ is almost periodic $\overline{\mathfrak O}_\si$ is minimal and compact, so $h\big(\overline{\mathfrak O}_\si\big)$ is also minimal and compact. This set is equal to $\overline{\mathfrak O}'_{h(\si)}$\,, so $h(\si)$ is almost periodic.
\end{proof}

We prove next a different result about the propagation of periodicity and almost periodicity under generalized vague morphism of actions.  It should be compared with Corollaries \ref{gogonata} and \ref{rolar}.

\begin{prop}\label{rollar}
Let $\Upsilon:\Th\rightarrowtail\Th'$ be a generalized vague morphism of actions with $\Gamma$ surjective, $\pi$ injective and proper and $\pi'\circ\gamma$ injective. 
\begin{enumerate}
\item[(i)]
If $\si$ is almost periodic, $h(\si)$ is almost periodic.
\item[(ii)]
If $\si$ is periodic, $h(\si)$ is periodic.
\end{enumerate}
\end{prop}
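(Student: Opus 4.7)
The plan is to establish both parts uniformly. I will let $U$ be an open neighborhood of $\si$ (taking $U = \{\si\}$ for the periodic case (ii), and $U = h^{-1}(U')$ for an arbitrary neighborhood $U'$ of $h(\si)$ in the almost periodic case (i)). The goal is to exhibit a compact set ${\sf K}' \subset \Xi'$, possibly depending on $U$, such that ${\sf K}'\cdot(\widetilde\Xi')_{h(\si)}^{h(U)} \supset \Xi'_{\rho'(h(\si))}$. Since $h(U) \subset U'$ in case (i) (and $h(U) = \{h(\si)\}$ in case (ii)), this will yield syndeticity of $(\widetilde\Xi')_{h(\si)}^{U'}$ in $\Xi'_{\rho'(h(\si))}$, which is precisely what both parts require.

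For the construction, I would first use that $\pi$ is a continuous bijection to single out the unique $b_0 \in A$ with $\pi(b_0) = \rho(\si)$; by Definition \ref{memeyer}(i) one has $\pi'(\gamma(b_0)) = \rho'(h(\si))$. Choose a compact ${\sf K} \subset \Xi$ witnessing the syndeticity of $\widetilde\Xi_\si^U$ in $\Xi_{\rho(\si)}$, set $\Gamma_2 := \Pi'\circ\Gamma$ as in Remark \ref{pisica}, and define ${\sf K}' := \Gamma_2(\Pi^{-1}({\sf K}))$. Properness of $\pi$ combined with Lemma \ref{joser} gives properness of $\Pi$, so $\Pi^{-1}({\sf K})$ is compact, hence so is ${\sf K}'$ by continuity of $\Gamma$ and $\Pi'$.

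The core of the argument is a lifting. Given an arbitrary $\xi' \in \Xi'_{\rho'(h(\si))}$, pick $a' \in A'$ with $\pi'(a') = \r'(\xi')$ (using that $\pi'$ is surjective); then $(a', \xi', \gamma(b_0)) \in \Xi'(\pi', A')$. By surjectivity of $\Gamma$ lift it to $(a, \eta, b) \in \Xi(\pi, A)$ with $\Gamma(a, \eta, b) = (a', \xi', \gamma(b_0))$. In particular $\gamma(b) = \gamma(b_0)$, so the injectivity of $\pi' \circ \gamma$ forces $b = b_0$, whence $\d(\eta) = \pi(b_0) = \rho(\si)$ and $\eta \in \Xi_{\rho(\si)}$. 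Syndeticity then allows one to write $\eta = \kappa\xi$ with $\kappa \in {\sf K}$ and $\xi \in \widetilde\Xi_\si^U$. Letting $c \in A$ be the unique element with $\pi(c) = \d(\kappa) = \r(\xi)$, one has $(a, \kappa, c)(c, \xi, b_0) = (a, \eta, b_0)$ in the pullback groupoid, and applying the morphism $\Gamma_2$ gives $\xi' = \Gamma_2(a, \kappa, c)\cdot\Gamma_2(c, \xi, b_0)$. The first factor lies in ${\sf K}'$ by construction, while the second lies in $(\widetilde\Xi')_{h(\si)}^{h(U)}$ by Theorem \ref{both} applied to $M = \{\si\}$ and $N = U$.

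The main obstacle will be to force the lifted element $\eta$ into the correct $\d$-fibre $\Xi_{\rho(\si)}$, without which the syndeticity of $\widetilde\Xi_\si^U$ cannot be invoked; this is precisely where the two injectivity hypotheses do essential work, namely $\pi$ injective to isolate $b_0$ uniquely and $\pi' \circ \gamma$ injective to identify $b$ with $b_0$ from the equality $\gamma(b) = \gamma(b_0)$. The properness of $\pi$ plays the complementary role of transporting a compact syndeticity witness ${\sf K} \subset \Xi$ to a compact syndeticity witness ${\sf K}' \subset \Xi'$ via the pullback.
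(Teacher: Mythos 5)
Your proof is correct and follows essentially the same route as the paper's: the same compact witness ${\sf K}'=\Gamma_2\big[\Pi^{-1}({\sf K})\big]$ via Lemma \ref{joser}, the same use of Theorem \ref{both} (inclusion \eqref{vormula}) for the factor in $(\widetilde\Xi')_{h(\si)}^{h(U)}$, and the same exploitation of the injectivity of $\pi$ and $\pi'\circ\gamma$. The only difference is presentational: you unfold the paper's fibre identity $\Gamma_2\big[\Xi(\pi,A)_b\big]=\Xi'_{\pi'[\gamma(b)]}$ into an explicit elementwise lifting, whereas the paper argues through a chain of set inclusions.
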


\begin{proof}
Let us start with some preparations. Note that $\Pi'\circ\Gamma=\Gamma_2$\,. It is a groupoid epimorphism $\Xi(\pi,A)\to\Xi'$ and its restriction to units $\Xi(\pi,A)^{(0)}\equiv A\to X'$ is exactly $\pi'\circ\gamma$\,, supposed injective here. It follows immediately that
\begin{equation}\label{sprejos}
\Gamma_2\big[\Xi(\pi,A)_b\big]=\Xi'_{\pi'[\gamma(b)]}\,,\quad\forall\,b\in A\,.
\end{equation}
This result also follows from Proposition \ref{label} applied to the canonical actions of the two groupoids. On the other hand, remark that the following diagram commutes:
\begin{equation*}\label{largedigoram}
\begin{diagram}
\node{\Si}\arrow{s,l}{h}\arrow{e,t}{\rho}\node{X}\node{A}\arrow{s,r}{\gamma}\arrow{w,t}{\pi}\\ 
\node{\Si'}\arrow{e,t}{\rho'}\node{X'}\node{A'}\arrow{w,t}{\pi'}
\end{diagram}
\end{equation*}
Since $\pi$ is injective, this translates into
\begin{equation}\label{translates}
\pi'\circ\gamma\circ\pi^{-1}\!\circ\rho=\rho'\circ h\,.
\end{equation}

\smallskip
(i) Let $V$ be an open neighborhood of $h(\si)\in\Si'$. Then $U\!:=h^{-1}(V)$ is an open neighborhood of $\si\in\Si$\,, so there exists a compact subset ${\sf K}\subset\Xi$ such that ${\sf K}\,\widetilde\Xi_\si^U=\Xi_{\rho(\si)}$\,. Define ${\sf K}'\!:=\Gamma_2\big[\Pi^{-1}({\sf K})\big]$\,, which is compact, by Lemma \ref{joser}. We can write
\begin{align*}
    {\sf K}'{\big(\widetilde\Xi'\big)}_{h(\si)}^{V}\supset{\sf K}'{\big(\widetilde\Xi'\big)}_{h(\si)}^{h(U)}&\overset{\eqref{vormula}}{\supset}\Gamma_2\big[\Pi^{-1}({\sf K})\big]\,\Gamma_2\Big[\Pi^{-1}\big(\widetilde\Xi_\si^U\big)\Big] \\
    &\supset\Gamma_2\Big[\Pi^{-1}({\sf K})\,\Pi^{-1}\big(\widetilde\Xi_\si^U\big)\Big] \\
    &\supset \Gamma_2\Big[\Pi^{-1}\big({\sf K}\,\widetilde\Xi_\si^U\big)\Big] \\
    &=\Gamma_2\Big[\Pi^{-1}\big(\Xi_{\rho(\si)}\big)\Big] \\
    &= \Gamma_2\Big[\Xi(\pi,A)_{\pi^{-1}(\rho(\si))}\Big] \\
    &\overset{\eqref{sprejos}}{=} \big(\Xi'\big)_{\pi'[\gamma(\pi^{-1}(\rho(\si)))]}\\
    &\overset{\eqref{translates}}{=}\big(\Xi'\big)_{\rho'(h(\si))}\,,
\end{align*} 
so $h(\si)\in\Si'$ is almost periodic. The third inclusion looks abstractly 
$$
\Pi^{-1}({\sf S})\Pi^{-1}({\sf T})\supset\Pi^{-1}({\sf ST})\,,\quad{\sf S,T}\subset\Xi
$$ 
and is left to the reader (note that $\Pi$ is injective, by the injectivity of $\pi$).

\smallskip
(ii) We assume now that $\si$ is periodic, i.\,e.\;that ${\sf L}\,\widetilde\Xi_\si^\si=\Xi_{\rho(\si)}$ for some compact subset ${\sf L}\subset\Xi$\,. Define ${\sf L}':=\Gamma_2\big(\Pi^{-1}({\sf L})\big)$\,, which is a compact set. One can repeat the computations above, replacing ${\sf K},{\sf K}',V$ by ${\sf L},{\sf L}',\{h(\si)\}$\,, respectively; instead of $U=h^{-1}(V)$ we take simply $\{\si\}$\,. One gets 
$$
{\sf L}'{\big(\widetilde\Xi'\big)}_{h(\si)}^{h(\si)}\supset\big(\Xi'\big)_{\rho'(h(\si))}\,,
$$ 
which shows that $h(\si)$ is periodic. 
\end{proof}


\begin{prop}\label{caciu}
Suppose that $\Upsilon$ is a generalized vague morphism between the groupoid actions $\Th$ and $\Th'$. 
If $\,\Gamma$ and $\pi'$ are proper maps, then for every point $\si\in\Si$ we have  $h\big(\mathfrak L_\si\big)\subset\mathfrak L_{h(\si)}'$\,. In consequence, $h(\si)$ is $\th'$-recurrent if $\si$ is $\th$-recurrent. In addition, $h(\si)$ is non-wandering if $\si$ is non-wandering.
\end{prop}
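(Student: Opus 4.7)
The plan is to use the characterization of the limit set from Remark \ref{precisely}: $\tau\in\mathfrak L_\si$ iff $\widetilde\Xi_\si^V$ fails to be relatively compact for every neighborhood $V$ of $\tau$. So fix $\tau\in\mathfrak L_\si$ and an arbitrary open neighborhood $V'$ of $h(\tau)$ in $\Si'$. By continuity of $h$, the set $V:=h^{-1}(V')$ is an open neighborhood of $\tau$, so $\widetilde\Xi_\si^V$ is not relatively compact. Since $h(V)\subset V'$, the obvious monotony of recurrence sets yields $\big(\widetilde\Xi'\big)_{h(\si)}^{h(V)}\subset\big(\widetilde\Xi'\big)_{h(\si)}^{V'}$, and inclusion \eqref{bormula} from Remark \ref{stift} gives
\[
\widetilde\Xi_\si^V\subset\Pi\Big(\Gamma^{-1}\Big\{(\Pi')^{-1}\Big[\big(\widetilde\Xi'\big)_{h(\si)}^{V'}\Big]\Big\}\Big).
\]

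The key step is to use properness to transport relative compactness from right to left. Assume for contradiction that $\big(\widetilde\Xi'\big)_{h(\si)}^{V'}$ is relatively compact; let ${\sf K}'$ denote its (compact) closure in $\Xi'$. Since $\pi'$ is proper, Lemma \ref{joser} tells us that $\Pi':\Xi'(\pi',A')\to\Xi'$ is proper, so $(\Pi')^{-1}({\sf K}')$ is compact. Since $\Gamma$ is proper by hypothesis, $\Gamma^{-1}\big[(\Pi')^{-1}({\sf K}')\big]$ is compact in $\Xi(\pi,A)$. Finally $\Pi$ is continuous, so $\Pi\big(\Gamma^{-1}\big[(\Pi')^{-1}({\sf K}')\big]\big)$ is compact in $\Xi$, and the displayed inclusion would force $\widetilde\Xi_\si^V$ to be relatively compact, a contradiction. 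Hence $\big(\widetilde\Xi'\big)_{h(\si)}^{V'}$ is not relatively compact. As $V'$ was arbitrary, $h(\tau)\in\mathfrak L'_{h(\si)}$, establishing $h(\mathfrak L_\si)\subset\mathfrak L'_{h(\si)}$.

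The remaining two statements are immediate consequences of this inclusion and of the same argument. If $\si$ is $\th$-recurrent, i.e.\;$\si\in\mathfrak L_\si$, then $h(\si)\in h(\mathfrak L_\si)\subset\mathfrak L'_{h(\si)}$, so $h(\si)$ is $\th'$-recurrent. For non-wandering points, given an open neighborhood $W'$ of $h(\si)$ we set $W:=h^{-1}(W')$, which is an open neighborhood of $\si$ with $h(W)\subset W'$; then $\widetilde\Xi_W^W$ is not relatively compact, and the same chain (with $M=N=W$ on the source side and $h(M),h(N)\subset W'$ on the target side) together with \eqref{bormula} shows by the exact same properness argument that $\big(\widetilde\Xi'\big)_{W'}^{W'}$ cannot be relatively compact.

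I expect the main (really, only) obstacle to be the careful bookkeeping of the chain $\Pi\circ\Gamma^{-1}\circ(\Pi')^{-1}$: one must be sure that each map is proper for the right reason (Lemma \ref{joser} for $\Pi'$ via the hypothesis on $\pi'$, and the direct hypothesis for $\Gamma$), and that continuity alone suffices for $\Pi$ since we only push a compact set forward. Once this is organized, the passage to non-relative-compactness by contraposition is routine, and the recurrent and non-wandering statements follow with no further work.
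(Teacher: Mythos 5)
Your proposal is correct and follows essentially the same route as the paper's proof: the characterization of limit sets from Remark \ref{precisely}, the inclusion \eqref{bormula}, properness of $\Pi'$ via Lemma \ref{joser} and of $\Gamma$ by hypothesis, continuity of $\Pi$, and a contradiction argument, with the non-wandering case handled by the same chain applied to $(W,W)$. The only cosmetic difference is that you run the contradiction with the larger set $\big(\widetilde\Xi'\big)_{h(\si)}^{V'}$ while the paper works with $\big(\widetilde\Xi'\big)_{h(\si)}^{h(V)}$ and invokes monotony afterwards, which is an equivalent bookkeeping choice.
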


\begin{proof}
We are going to use the characterization of limit sets supplied in Remark \ref{precisely}. Let $\tau'=h(\tau)$ with $\tau\in\mathfrak L_\si$ and let $V'$ be a neighborhood of $\tau'$. Then $V:=h^{-1}(V')$ is a neighborhood of $\tau$ and $h(V)\subset V'$. Since $\big(\widetilde{\Xi}'\big)_{h(\si)}^{h(V)}\!\subset\!\big(\widetilde{\Xi}'\big)_{h(\si)}^{V'}$\,, it is enough to show that $\big(\widetilde{\Xi}'\big)_{h(\si)}^{h(V)}$ is not relatively compact. Since $\tau$ is a limit point, $\widetilde{\Xi}_\si^V$ is not relatively compact. Suppose however that $\big(\widetilde{\Xi}'\big)_{h(\si)}^{h(V)}$ is relatively compact. Formula \eqref{bormula} and the fact that $\Gamma$ and $\Pi'$ (see Lemma \ref{joser}) are proper and $\Pi$ is continuous show that $\widetilde\Xi_\si^V$ is also relatively compact and we finished the proof of the inclusion by contradiction.

\smallskip
The statement about recurrent points follows from this and from the definition.

\smallskip
To prove the last statement, one must show that $\si$ is wandering if $h(\si)$ is wandering. This goes as in the proof concerning limit sets, but with the pair $\big(\{\tau'\},V'\big)$ replaced by the pair $\big(W',W'\big)$\,, where $W'$ is a neighborhood of $h(\si)$ such that $\big(\widetilde{\Xi}'\big)_{W'}^{W'}$ is relatively compact.
\end{proof}

\section{Algebraic morphisms}\label{sucitorr}

\subsection{Algebraic morphisms of groupoid actions}\label{sucitor}

\begin{defn}\label{morfalgulus}
Let $\big(\Xi_1,\rho_1,\bu_1,\Si\big)$ be a left action and $\big(\Xi_2,\rho_2,\bu_2,\Si\big)$ a right action. We say that {\it the two actions commute} if 
\begin{enumerate}
\item[(i)] $\rho_1\big(\si\bu_2\xi_2\big)=\rho_1(\si)$ \;if \;$\r_2(\xi_2)=\rho_2(\si)$\,,
\item[(ii)] $\rho_2\big(\xi_1\bu_1\si\big)=\rho_2(\si)$ \;if \;$\d_1(\xi_1)=\rho_1(\si)$\,,
\item[(iii)] $\xi_1\bu_1\big(\si\bu_2\xi_2\big)=\big(\xi_1\bu_1\si\big)\bu_2\xi_2$ \;if \;$\d_1(\xi_1)=\rho_1(\si)$\; and\; $\r_2(\xi_2)=\rho_2(\si)$\,.
\end{enumerate}
\end{defn}

The following notion was taken from \cite{Bu,BS,BEM}:

\begin{defn}\label{morfalg}
Let $\Xi\,,\Xi'$ be two groupoids, with unit spaces $X$ and $X'$, respectively. {\it An algebraic morphism} (also called {\it an actor}) $\Xi\rightsquigarrow\Xi'$ is an action $\big(\Xi,\mu,\diamond,\Xi'\big)$ of $\Xi$ on the topological space $\Xi'$, which commutes with the right action of $\Xi'$ on itself by right multiplications.
\end{defn}

\begin{rem}\label{commut}
In this case 
$$
\Xi_1=\Xi\,,\quad\Xi_2=\Si=\Xi',\quad\rho_1=\mu:\Xi'\to X,\quad\bu_1=\diamond\,,\quad\rho_2=\d':\Xi'\to X'
$$ 
and commutativity means that
\begin{equation}\label{beans}
\mu(\eta'\xi')=\mu(\eta')\,,\quad{\rm if}\ \ \d'(\eta')=\r'(\xi')\,,
\end{equation}
\begin{equation}\label{teans}
\d'(\xi\diamond\eta')=\d'(\eta')\,,\quad{\rm if}\ \ \d(\xi)=\mu(\eta')\,,
\end{equation}
\begin{equation*}\label{means}
\xi\diamond(\eta'\xi')=(\xi\diamond\eta')\xi',\quad{\rm if}\ \ \d(\xi)=\mu(\eta')\,,\ \d'(\eta')=\r'(\xi')\,.
\end{equation*}
We are going to denote by $\nu$ the restriction of $\mu$ to $X'\!=\Xi'^{(0)}$. From \eqref{beans} one infers immediately that $\mu=\nu\circ\r'$\,; it follows that here $\mu$ is determined by $\nu$.
\end{rem}

\begin{ex}\label{proposit}
For two topological groups $\G,\G'$, algebraic morphisms are exactly topological group morphisms $\beta:\G\to\G'$. In this case $X=\{\e\}$ and $X'=\{\e'\}$\,, so $\mu(\xi')=\e$ for every $\xi'\in\G'$, and then $\xi\diamond\xi':=\beta(\xi)\xi'$ defines the action (note that $\beta(\xi)=\xi\diamond\e'$\,)\,.
\end{ex}

\begin{ex}\label{procopsit}
More generally (cf. Example \ref{vasnatoare}), let 
$$
\Xi=\bigsqcup_{x\in X}\G_x\,,\quad\Xi'=\bigsqcup_{x'\in X'}\G'_{x'}
$$ 
two group bundles and $(\Xi,\mu,\diamond,\Xi')$ an algebraic morphism. We recall that $\mu=\nu\circ\r'$ is determined by $\nu:X'\to X$. The elements $\xi\in\Xi_x$ and $\xi'\!\in\Xi'_{x'}$ may be composed by $\diamond$ if and only if $x=\nu(x')$\,. The action $\diamond$ reduces to a family of actions 
$$
\diamond_{x'}:\G_{\nu(x')}\!\times\G'_{x'}\to\G'_{x'}\,,\quad x'\in X'
$$
and finally, by Example \ref{proposit}, to a family of topological group morphisms
$$
\beta_{x'}:\G_{\nu(x')}\to\G'_{x'}\,,\quad x'\in X'
$$ tied together by some continuity condition.
\end{ex}

\begin{ex}\label{poposit}
If $\Xi'$ is a trivial groupoid (this is $\Xi'=X'$), the only possible algebraic morphism $\Xi\rightsquigarrow\Xi'$ is the one induced by the trivial action $\xi\diamond x'=x',\,\forall\,\xi\in\Xi_{\mu(x')}$\,(and this can only happen if $\d(\xi)=\r(\xi)$ for every $\xi\in\Xi_{\mu(X')}$). So it may be identified with the anchor map $\mu:\Xi'\to X$.
If, in addition, the groupoid $\Xi$ is also trivial, the algebraic morphism reduces to a continuous map $\Xi'=X'\!\overset{\mu}{\to}X=\Xi$ (note the inverse direction) and the outer multiplication is
\begin{equation*}\label{sviciovici}
\mu(x')\diamond x'\!:=x',\quad\forall\,x'\in X'.
\end{equation*}
\end{ex}

There is a way to multiply algebraic morphisms $\Xi_1\!\overset{\Phi_{12}}{\rightsquigarrow}\!\Xi_2\!\overset{\Phi_{23}}{\rightsquigarrow}\!\Xi_3$\,, resulting in the new algebraic morphism $\Xi_1\!\overset{\Phi_{13}}{\rightsquigarrow}\!\Xi_3$\,, where $\Phi_{13}\!:=\Phi_{23}\circ\Phi_{12}$\,: Let 
$\Phi_{12}:=\big(\Xi_1,\mu_{12},\diamond_{12},\Xi_2\big)$ and $\Phi_{23}:=\big(\Xi_2,\mu_{23},\diamond_{23},\Xi_3\big)$ be the two algebraic morphisms. We set
\begin{equation}\label{gombun}
\Phi_{13}:=\Phi_{23}\circ\Phi_{12}\equiv\big(\Xi_1,\mu_{13},\diamond_{13},\Xi_3\big)\,,
\end{equation}
\begin{equation}\label{pebucati}
\mu_{13}:=\mu_{12}\circ\mu_{23}:\Xi_3\to X_1\subset\Xi_1\,,
\end{equation}
\begin{equation}\label{prinscaieti}
\xi_1\diamond_{13}\xi_3:=\big(\xi_1\diamond_{12}\mu_{23}(\xi_3)\big)\diamond_{23\,}\xi_3\quad{\rm if}\quad\d_1(\xi_1)=\mu_{13}(\xi_3)=\mu_{12}\big[\mu_{23}(\xi_3)\big]\,.
\end{equation}
Both compositions in the r.h.s. of \eqref{prinscaieti} are possible; in particular, by \eqref{teans}, one has
$$
\d_2\big(\xi_1\diamond_{12}\mu_{23}(\xi_3)\big)=\d_2\big(\mu_{23}(\xi_3)\big)=\mu_{23}(\xi_3)\,.
$$ 

\begin{defn}\label{morfalg}
Let $\Th\!:=(\Xi,\rho,\th=\bu,\Si)$ and $\Th'\!:=(\Xi',\rho',\th'=\bu',\Si')$ be two groupoid actions, with unit spaces $X$ and $X'$, respectively. {\it An algebraic morphism of actions} $\Th\rightsquigarrow\Th'$ is a pair $(\Phi,g)$\,, where $\Phi\equiv(\Xi,\mu,\diamond,\Xi'):\Xi\rightsquigarrow\Xi'$ is an algebraic morphism, $g:\Si\to\Si'$ is a continuous function, such that 
\begin{equation}\label{fuame}
\rho=\nu\circ\rho'\circ g\,,
\end{equation}
\begin{equation}\label{siete}
g(\xi\bu\si)=\big[\xi\diamond\rho'(g(\si))\big]\bu'\!g(\si)\quad\textup{ whenever}\quad \d(\xi)=\rho(\si)=\nu\big[\rho'(g(\si))\big]\,.
\end{equation}
\end{defn}

The composability condition $\d'\big[\xi\diamond\rho'(g(\si))\big]=\rho'(g(\si))$ is automatic, by \eqref{teans}. Part of the story is told in the diagram
\begin{equation*}\label{copilofloc}
\begin{diagram}
\node{\Xi}\arrow{r,t}{\d}\node{X}\node{\Si}\arrow{w,t}{\rho}\arrow{s,r}{g}\\ 
\node{\Xi'}\arrow{ne,l}{\mu}\arrow{e,t}{\d'}\node{X'}\arrow{n,r}{\nu}\node{\Si'}\arrow{w,t}{\rho'}
\end{diagram}
\end{equation*}

\begin{ex}\label{terminalg}
Assume that both of the groupoids are groups: $\Xi=\G$\,, $\Xi'=\G'$, acting in the topological spaces $\Si,\Si'$, respectively. Recalling the setting of Example \ref{proposit} and the fact that $X=\{\e\}\,,X'=\{\e'\}$\,, condition \eqref{siete} translates into 
$$
g(\xi\bu\si)=\big[\xi\diamond\rho'(g(\si))\big]\bu'\!g(\si)=\big[\beta(\xi)\rho'(g(\si))\big]\bu'\!g(\si)=\beta(\xi)\bu'g(\si)\,,
$$ 
so algebraic morphisms become ordinary morphisms of (group) actions. (Note that the condition \eqref{fuame} gets trivialized)
\end{ex}

\begin{ex}\label{terminale}
If both action $\Th$ and $\Th'$ are canonical (terminal) actions, as in Example \ref{startlet}, then the above conditions translate into
\begin{equation*}\label{fuamme}
{\rm id}_X=\nu\circ g\,,
\end{equation*}
\begin{equation*}\label{siette}
\r'\big[\xi\diamond g(\d(\xi))\big]=g(\r(\xi))\,.
\end{equation*} Maybe it is interesting to note that $\d'\big[\xi\diamond g(\d(\xi))\big]=g(\d(\xi))$ also holds, because of condition \eqref{teans}.
\end{ex}

\begin{defn}\label{composition}
{\it The composition of two algebraic morphisms of actions} 
\begin{equation*}\label{trantit}
\big(\Xi_1,\rho_1,\bu_1,\Si_1\big)\overset{\Phi_{12}}{\underset{g_{12}}{\rightsquigarrow}}\big(\Xi_2,\rho_2,\bu_2,\Si_2\big)\underset{g_{23}}{\overset{\Phi_{23}}{\rightsquigarrow}}\big(\Xi_3,\rho_3,\bu_3,\Si_3\big)\,,
\end{equation*} 
where
\begin{equation*}\label{vraf}
\Phi_{12}\equiv\big(\Xi_1,\mu_{12},\diamond_{12},\Xi_2\big)\,,\quad\Phi_{23}\equiv\big(\Xi_2,\mu_{23},\diamond_{23},\Xi_3\big)
\end{equation*} 
is naturally defined as 
$$
(\Phi_{23},g_{23})\circ(\Phi_{12},g_{12}):=(\Phi_{23}\circ\Phi_{12},g_{23}\circ g_{12})\,,
$$ 
where $\Phi_{23}\circ\Phi_{12}$ comes from \eqref{gombun},\,\eqref{pebucati},\,\eqref{prinscaieti}  and $g_{23}\circ g_{12}$ is the usual composition of functions.
\end{defn}

\begin{prop}\label{saspermam}
The composition of two algebraic morphisms of actions is also an algebraic morphism of actions.
\end{prop}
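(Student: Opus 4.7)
The plan is to verify directly that the pair $(\Phi_{13},g_{13}):=(\Phi_{23}\circ\Phi_{12},g_{23}\circ g_{12})$ meets all the requirements of Definition \ref{morfalg}. Continuity of $g_{13}$ is automatic, so three things need checking: first, that the construction \eqref{gombun}--\eqref{prinscaieti} really does produce an algebraic morphism $\Phi_{13}:\Xi_1\rightsquigarrow\Xi_3$; second, that the anchor identity \eqref{fuame} for $(\Phi_{13},g_{13})$ holds; third, that the equivariance \eqref{siete} is satisfied by $(\Phi_{13},g_{13})$.

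For the first point, I would check the action axioms for $\diamond_{13}$ by reducing them, via \eqref{prinscaieti}, to the corresponding axioms for $\diamond_{12}$ and $\diamond_{23}$; the composability of the two $\diamond$-products on the right-hand side of \eqref{prinscaieti} is guaranteed by \eqref{teans} applied to $\Phi_{12}$, which gives $\d_2\bigl(\xi_1\diamond_{12}\mu_{23}(\xi_3)\bigr)=\mu_{23}(\xi_3)$. Commutation of $\diamond_{13}$ with the right regular action of $\Xi_3$ on itself then follows at once from the analogous property of $\diamond_{23}$. For the anchor identity, the key observation is that since $\nu_{23}(X_3)\subset X_2$, the restriction of $\mu_{13}=\mu_{12}\circ\mu_{23}$ to $X_3$ is $\nu_{13}=\nu_{12}\circ\nu_{23}$; applying \eqref{fuame} successively to $(\Phi_{23},g_{23})$ and $(\Phi_{12},g_{12})$ yields $\nu_{13}\circ\rho_3\circ g_{13}=\nu_{12}\circ\rho_2\circ g_{12}=\rho_1$, as required.

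The meat of the proof is the third point. Given $\xi_1\in\Xi_1$ and $\si\in\Si_1$ with $\d_1(\xi_1)=\rho_1(\si)$, I would set $\si_2:=g_{12}(\si)$ and $\xi_2:=\xi_1\diamond_{12}\rho_2(\si_2)$; then \eqref{siete} applied to $(\Phi_{12},g_{12})$ gives $g_{12}(\xi_1\bu_1\si)=\xi_2\bu_2\si_2$. Applying $g_{23}$ and invoking \eqref{siete} for $(\Phi_{23},g_{23})$ leads to
\begin{equation*}
g_{13}(\xi_1\bu_1\si)=\bigl[\xi_2\diamond_{23}\rho_3(g_{23}(\si_2))\bigr]\bu_3 g_{23}(\si_2)\,.
\end{equation*}
The crucial rewriting is $\rho_2(\si_2)=\nu_{23}(\rho_3(g_{23}(\si_2)))$, by \eqref{fuame} for $(\Phi_{23},g_{23})$; setting $\xi_3:=\rho_3(g_{23}(\si_2))\in X_3$, so that $\mu_{23}(\xi_3)=\nu_{23}(\xi_3)=\rho_2(\si_2)$, the bracket becomes $\bigl(\xi_1\diamond_{12}\mu_{23}(\xi_3)\bigr)\diamond_{23}\xi_3$, which by \eqref{prinscaieti} is precisely $\xi_1\diamond_{13}\rho_3(g_{13}(\si))$. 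This delivers \eqref{siete} for $(\Phi_{13},g_{13})$.

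The main obstacle is not conceptual but bookkeeping: the various composability hypotheses and the interplay between $\mu_{23}$, $\nu_{23}$, $\rho_2$ and $\rho_3$ must be kept straight. Once the two identities $\nu_{13}=\nu_{12}\circ\nu_{23}$ and $\rho_2\circ g_{12}=\nu_{23}\circ\rho_3\circ g_{13}$ are visibly in hand, the remainder is mechanical substitution, matching the right-hand side of \eqref{siete} for the composition against the twice-applied equivariance and invoking \eqref{prinscaieti} at the end.
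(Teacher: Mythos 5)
Your proposal is correct and follows essentially the same route as the paper: the anchor identity is obtained by composing \eqref{fuame} for the two morphisms, and the equivariance \eqref{siete} for the composition is derived by applying \eqref{siete} twice, rewriting $\rho_2(g_{12}(\si))$ as $\nu_{23}\bigl(\rho_3(g_{23}(g_{12}(\si)))\bigr)$ via \eqref{fuame}, and then recognizing the definition \eqref{prinscaieti} of $\diamond_{13}$. Your extra remarks (the check that $\Phi_{13}$ is itself an algebraic morphism, and the composability bookkeeping via \eqref{teans}) are consistent with what the paper leaves implicit.
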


\begin{proof}
We are going to use the notations of Definition \ref{composition}. At the level of anchor maps one has
\begin{equation*}\label{scursici}
\rho_1=\nu_{12}\circ \rho_2\circ  g_{12}=\nu_{12}\circ\big(\nu_{23}\circ \rho_3\circ g_{23}\big)\circ g_{12}=\nu_{13}\circ \rho_3\circ  g_{13}.
\end{equation*} We must also verify that, if $\d_1(\xi_1)=\rho_1(\si_1)$, then
\begin{equation*}\label{shapte}
g_{23}\big( g_{12}(\xi_1\bu_1\si_1)\big)=\big[\xi_1\diamond_{13}\rho_3(g_{23}( g_{12}(\si_1)))\big]\bu_3\!g_{23}\big( g_{12}(\si_1)\big)
\end{equation*} 
holds. Indeed, we will prove this, starting from the left hand side:
$$
\begin{aligned}
g_{23}\big( g_{12}(\xi_1\bu_1\si_1)\big)&\overset{\eqref{siete}}{=} g_{23}\big(\big[\xi_1\diamond_{12}\rho_2(g_{12}(\si_1))\big]\bu_2 g_{12}(\si_1)\big)\\
&\overset{\eqref{siete}}{=} \Big[\big[\xi_1\diamond_{12}\rho_2(g_{12}(\si_1))\big]\diamond_{23}\rho_3(g_{23}(g_{12}(\si_1)))\Big]\bu_{3}g_{23}\big( g_{12}(\si_1)\big) \\
&\overset{\eqref{fuame}}{=}
\Big[\big[\xi_1\diamond_{12}\nu_{23}\big(\rho_3(g_{23}(g_{12}(\si_1)))\big)\big]\diamond_{23}\rho_3\big(g_{23}(g_{12}(\si_1))\big)\Big]\bu_{3}g_{23}\big( g_{12}(\si_1)\big) \\
&\overset{\eqref{prinscaieti}}{=}\big[\xi_1\diamond_{13}\rho_3\big(g_{23}(g_{12}(\si_1))\big)\big]\bu_3 g_{23}\big( g_{12}(\si_1)\big)\,.
\end{aligned}
$$
\end{proof} 

It is not hard to see that the composition of algebraic morphisms is associative and has an identity morphism for every action: If $\Th=(\Xi,\rho,\bu,\Si)$ is an action, the algebraic morphism of actions consisting of the action of $\Xi$ on itself by left multiplication and the identity function $g={\rm id}_\Si$ is the identity morphism associated to $\Th$\,. So we are in presence of a category whose objects are groupoid actions and the arrows are algebraic morphisms.

\smallskip
Let us finish this subsection with a lemma which briefly explores some of the dynamics of the action $\diamond$\,; it will be used below. We denote by $\mathfrak O_{\xi'}^\diamond$ the orbit of $\xi'\!\in\Xi'$ under this action. ${\rm Sat}^\diamond(\cdot)$ represents the saturation (i.e. the smallest invariant set containing $\cdot$) with respect to the action $\diamond$\,.

\begin{lem}\label{constant}
The function $\d':\Xi'\to X'$ is constant on the orbits of $\diamond$\,. That is, if $\xi_1'\overset{\diamond}{\sim}\xi_2'$\,, then $\d'(\xi_1')=\d'(\xi_2')$\,. One may write this as
$$
\mathfrak O_{\xi'}^\diamond\subset \big(\Xi'\big)_{\d'(\xi')}\,,\quad\forall\,\xi'\in\Xi'.
$$ 
Moreover, if $\,{\rm Sat}^\diamond\big[\rho'(g(\Si))\big]=\Xi'$, the equality is achieved.
\end{lem}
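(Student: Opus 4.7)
The plan is to treat the two assertions separately, using equation \eqref{teans} as the workhorse for both. For the containment $\mathfrak O_{\xi'}^\diamond \subset (\Xi')_{\d'(\xi')}$, I would apply \eqref{teans} directly: if $\xi_2' = \xi \diamond \xi_1'$ with $\d(\xi) = \mu(\xi_1')$, then $\d'(\xi_2') = \d'(\xi \diamond \xi_1') = \d'(\xi_1')$. Since the orbit equivalence $\overset{\diamond}{\sim}$ is generated by these one-step applications (and by inversion of the acting element, which gives the reverse relation), $\d'$ is constant on every $\diamond$-orbit, giving the inclusion.

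For the reverse inclusion under the saturation hypothesis, the key observation is that $\d'$ restricts to the identity on $X'$: if a unit $x' \in X'$ happens to lie in the orbit of $\xi'$, then necessarily $x' = \d'(x') = \d'(\xi')$. Thus each $\diamond$-orbit meets $X'$ in at most one point, and that point is the common $\d'$-value of the orbit. The saturation hypothesis ${\rm Sat}^\diamond\big[\rho'(g(\Si))\big]=\Xi'$ then guarantees that every $\xi' \in \Xi'$ lies in the orbit of some unit of the form $\rho'(g(\si))$, which by the previous observation must equal $\d'(\xi')$. Consequently $\mathfrak O_{\xi'}^\diamond = \mathfrak O_{\rho'(g(\si))}^\diamond = \mathfrak O_{\d'(\xi')}^\diamond$.

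To close the equality, I would take any $\eta' \in (\Xi')_{\d'(\xi')}$ and run the same argument for $\eta'$: some $\si' \in \Si$ gives $\eta' \in \mathfrak O_{\rho'(g(\si'))}^\diamond$ with $\rho'(g(\si')) = \d'(\eta') = \d'(\xi') = \rho'(g(\si))$, hence $\eta'$ and $\xi'$ share the same orbit. The only conceptual point requiring care is recognising that the saturation hypothesis serves precisely to pin down $\d'(\xi')$ inside $\rho'(g(\Si))$, thereby forcing two \emph{a priori} different orbits (that of $\xi'$ and that of $\eta'$) to be represented by the same unit; once this is noted, no routine calculation remains.
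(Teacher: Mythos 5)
Your argument is correct; the first half (constancy of $\d'$ on $\diamond$-orbits via \eqref{teans}) is exactly the paper's, but your proof of the equality under ${\rm Sat}^\diamond\big[\rho'(g(\Si))\big]=\Xi'$ follows a genuinely different route. You use three structural facts: the saturation of a set is the union of the $\diamond$-orbits of its points (with the paper's notion of invariance from Definition \ref{pinera}), units are fixed by $\d'$, and the orbit relation is an equivalence relation; hence every element's orbit contains a unit of the form $\rho'(g(\si))$, that unit is forced to be the common $\d'$-value, and any two elements of $\big(\Xi'\big)_{\d'(\xi')}$ therefore lie in the orbit of the same unit $\d'(\xi')$. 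The paper instead argues one step: given $\xi_1',\xi_2'$ with $\d'(\xi_1')=\d'(\xi_2')$, it applies the saturation hypothesis to the product $\xi_1'(\xi_2')^{-1}$, identifies the resulting unit as $\r'(\xi_2')$, and then uses the commutation of $\diamond$ with right multiplication to produce an explicit $\xi\in\Xi$ with $\xi\diamond\xi_2'=\xi_1'$. Your version is shorter and avoids any groupoid multiplication, needing only part one plus the orbit partition; the paper's version buys an explicit witness for the orbit relation and exercises the commutation axiom, a manipulation reused later (e.g.\ in Proposition \ref{structure}). The only point you should make explicit is the identification ${\rm Sat}^\diamond(B)=\bigcup_{b\in B}\mathfrak O^\diamond_b$, which follows from the definition of invariance because the action of $\xi$ is undone by $\xi^{-1}$; once stated, your proof is complete.
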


\begin{proof}
If $\xi_1'$ and $\xi_2'$ are related by $\xi_1'=\xi\diamond\xi_2'$\,, then 
$$
\d'(\xi_1')=\d'\big(\xi\diamond\xi_2'\big)=\d'(\xi_2')\,.
$$ 
If now ${\rm Sat}^\diamond\big[\rho'(g(\Si))\big]=\Xi'$, for every pair of elements $\xi_1',\xi_2'\in\big(\Xi'\big)_{\d'(\xi')}$ we have 
$$
\r'\big((\xi_2')^{-1}\big)=\d'(\xi_2')=\d'(\xi_1')\,,
$$ 
so there exists some $\xi\in\Xi$ and some $x'=\rho'(g(\si))\in \rho'(g(\Si))$ such that 
$$
\xi\diamond x'=\xi_1'(\xi_2')^{-1}.
$$ 
Observe that 
$$
\r'(\xi_2')=\d'(\xi_1'(\xi_2')^{-1})=\d'(\xi\diamond x')=x',
$$ 
so in fact we have $\xi\diamond \r'(\xi_2')=\xi_1'(\xi_2')^{-1}$. This implies $\xi\diamond\xi_2'=\xi_1'$\,, so $\xi_1'$ and $\xi_2'$ belong to the same $\diamond$-orbit. 
\end{proof}

In general, we will use the assumption ${\rm Sat}^\diamond\big[\rho'(g(\Si))\big]=\Xi'$ as being the analogous of surjectivity. A justification for this is given in Remark \ref{image}. This assumption is stronger than ${\rm Sat}^\diamond(X')=\Xi'$, but weaker than ${\rm Sat}^\diamond(X')=\Xi$ and $g$ surjective. It will play an important role in the next subsection.

\begin{rem}
If ${\rm Sat}^\diamond\big[\rho'(g(\Si))\big]=\Xi'$, then for every $x'\in X'$, there exists $\si\in\Si$ and $\xi\in\Xi$ such that $\xi\diamond \rho'(g(\si))=x'$. This implies that 
$$
x'=\d'(x')=\d'\big(\xi\diamond \rho'(g(\si))\big)=\d'\big(\rho'(g(\si))\big)=\rho'(g(\si))\,,
$$ 
which means that $\rho'\circ g$ must be surjective.
\end{rem}

\subsection{Dynamical properties under algebraic morphisms of groupoid actions}\label{sucrier}

Let us fix  an algebraic morphism of groupoid actions 
$$
(\Phi,g):(\Xi,\rho,\bu,\Si)\rightsquigarrow\big(\Xi',\rho',\bu',\Si'\big)\,,\ \ {\rm with}\ \ \Phi=(\mu,\diamond)\,.
$$ 
Recall that the anchor maps $\rho,\rho'$ are assumed surjective. If necessary, we indicate the action by an upper label.

Let us start with the relationship between recurrence sets of the two actions. 

\begin{thm}\label{liema}
For $M,N\subset\Si$ we have
\begin{equation}\label{relashn}
\widetilde\Xi_{M}^N\diamond\rho'\big(g(M)\big)\subset\big(\widetilde\Xi'\big)_{g(M)}^{g(N)}\,.
\end{equation}
If $\,{\rm Sat}^\diamond\big[\rho'(g(\Si))\big]=\Xi'$ and $g$ is injective, equality is achieved. 
\end{thm}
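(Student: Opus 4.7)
The key tool is the intertwining identity \eqref{siete}, which plays here the role that \eqref{ciudatta} played in Proposition \ref{label}: once $\xi$ acts on $\si$, the element $\xi\diamond\rho'(g(\si))\in\Xi'$ carries $g(\si)$ to $g(\xi\bu\si)$. Both directions are chases of witnesses across this identity; the reverse inclusion is where the saturation hypothesis and Lemma \ref{constant} do the real work.

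For the forward inclusion, I would fix $\xi'=\xi\diamond\rho'(g(\si))$ in the left-hand side, where $\xi\in\widetilde\Xi_M^N$ and $\si\in M$ is chosen so that $\xi\bu\si\in N$. The composability condition $\d(\xi)=\mu(\rho'(g(\si)))$ reduces to $\d(\xi)=\rho(\si)$ using $\mu=\nu\circ\r'$ (Remark \ref{commut}) and \eqref{fuame}. Then \eqref{teans} gives $\d'(\xi')=\rho'(g(\si))$, while \eqref{siete} gives $\xi'\bu'g(\si)=g(\xi\bu\si)\in g(N)$, so $g(\si)\in g(M)$ witnesses $\xi'\in(\widetilde\Xi')_{g(M)}^{g(N)}$.

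For the reverse inclusion under the extra hypotheses, I would fix $\xi'\in(\widetilde\Xi')_{g(M)}^{g(N)}$ and choose $\tau'\in g(M)$ with $\d'(\xi')=\rho'(\tau')$ and $\xi'\bu'\tau'\in g(N)$. Injectivity of $g$ produces unique $\si\in M$ with $\tau'=g(\si)$ and unique $\tau\in N$ with $\xi'\bu'g(\si)=g(\tau)$. The hypothesis ${\rm Sat}^\diamond[\rho'(g(\Si))]=\Xi'$ lets Lemma \ref{constant} identify the $\diamond$-orbit of the unit $\rho'(g(\si))$ with the entire fiber $(\Xi')_{\rho'(g(\si))}$. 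Since $\d'(\xi')=\rho'(g(\si))$, this fiber contains $\xi'$, giving $\xi\in\Xi$ with $\d(\xi)=\rho(\si)$ and $\xi'=\xi\diamond\rho'(g(\si))$. Now \eqref{siete} reads $g(\xi\bu\si)=\xi'\bu'g(\si)=g(\tau)$; injectivity of $g$ forces $\xi\bu\si=\tau\in N$, so $\xi\in\widetilde\Xi_M^N$ and $\xi'\in\widetilde\Xi_M^N\diamond\rho'(g(M))$.

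The main obstacle I foresee is precisely the decomposition step of the reverse direction: producing an element $\xi\in\Xi$ whose $\diamond$-action sends the specific unit $\rho'(g(\si))$ to the given $\xi'$. This is nontrivial and relies entirely on the saturation hypothesis through Lemma \ref{constant}, which acts as the bridge from ``$\xi'$ sits in the correct $\d'$-fiber'' to ``$\xi'$ is reachable by $\diamond$ from the correct unit''. The forward direction, by contrast, is a routine verification once \eqref{siete} and \eqref{teans} are invoked, and the role of $g$ being injective in the equality case is just to promote $g(\xi\bu\si)\in g(N)$ back to $\xi\bu\si\in N$.
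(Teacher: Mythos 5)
Your argument is correct and follows essentially the same route as the paper's proof: the forward inclusion by combining \eqref{fuame}, \eqref{teans} and \eqref{siete}, and the reverse inclusion by decomposing $\xi'=\xi\diamond\rho'(g(\si))$ over the witness $\si\in M$ and using injectivity of $g$ to pull $\xi\bu\si$ back into $N$. The only (cosmetic) difference is that you obtain the decomposition by invoking the equality case of Lemma \ref{constant} on the fiber $\big(\Xi'\big)_{\rho'(g(\si))}$, whereas the paper unwinds the hypothesis ${\rm Sat}^\diamond\big[\rho'(g(\Si))\big]=\Xi'$ directly and then adjusts the base unit to a witness via $\d'(\xi')=\rho'(g(\si))=\rho'(g(\si_0))$; the two steps are equivalent.
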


\begin{proof}
Let $\xi\diamond\rho'\big(g(\si)\big)$ be an element of $\widetilde\Xi_{M}^N\diamond\rho'\big(g(M)\big)$\,, with $\xi\in \widetilde\Xi_{M}^N$ and $\si\in M$. This implies that 
$$
\d(\xi)=\nu\big[\rho'(g(\si))\big]\overset{\eqref{fuame}}{=}\rho(\si)\,.
$$ 
So one has
$$
\big[\xi\diamond\rho'(g(\si))\big]\bu'g(\si)=g(\xi\bu\si)\in g(N)\,.
$$ 
It follows that $\xi\diamond\rho'(g(\si))\in\big(\widetilde\Xi'\big)_{g(M)}^{g(N)}$\,. 

\smallskip
If ${\rm Sat}^\diamond\big[\rho'(g(\Si))\big]=\Xi'$, then every $\xi'\in\big(\widetilde\Xi'\big)_{g(M)}^{g(N)}$ can be written as 
$$
\xi'=\xi\diamond\rho'(g(\si))\,.
$$ 
Without lose of generality we may assume that $\si$ is in $M$ and satisfies $\xi'\bu'g(\si)\in g(N)$ (if not, select some $\si_0\in M$ such that $\xi'\bu'g(\si_0)\in g(N)$ and observe that $\d'(\xi')=\rho'(g(\si))=\rho'(g(\si_0))$). In this case, one has $\d(\xi)=\nu\big[\rho'(g(\si))\big]=\rho(\si)$\,, so 
$$
\xi'\bu'g(\si)=\big(\xi\diamond\rho'(g(\si))\big)\bu'g(\si)=g(\xi\bu\si)\in g(N)\,.
$$ 
Using the injectivity of $g$ one gets $\xi\bu\si\in N$, meaning that $\xi\in\Xi_{M}^N$\,, and the equality in \eqref{relashn} follows from the way $\xi'$ was defined.
\end{proof}

\begin{prop}\label{jnitzel}
\begin{enumerate}
\item[(i)]
Let $\si,\tau\in\Si$ such that $\si\overset{\bu}{\sim}\tau$. Then $g(\si)\overset{\bu'}{\sim}g(\tau)$\,. Consequently 
\begin{equation}\label{arab}
g\big(\mathfrak O_{\si}\big)\subset\mathfrak O_{g(\si)}'\quad{\rm and}\quad g\Big(\overline{\mathfrak O}_{\si}\Big)\subset\overline{\mathfrak O}_{g(\si)}'\,.
\end{equation}
\item[(ii)]
If $\,{\rm Sat}^\diamond\big[\rho'(g(\Si))\big]=\Xi'$, then 
\begin{equation*}\label{tropism}
g\big(\mathfrak O_{\si}\big)=\mathfrak O_{g(\si)}'\,.
\end{equation*}
Hence the function $g$ maps $\bu$-invariant subsets of $\,\Si$ to $\bu'$-invariant subsets of $\,\Si'$. 
\item[(iii)]
Inverse images through $g$ of $\bu'$-invariant subsets of $\,\Si'$ are $\bu$-invariant subsets of $\,\Si$\,.
\end{enumerate}
\end{prop}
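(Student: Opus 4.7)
For part (i), my plan is to invoke the equivariance identity \eqref{siete} directly. If $\si\overset{\bu}{\sim}\tau$, then $\tau=\xi\bu\si$ for some $\xi\in\Xi$ with $\d(\xi)=\rho(\si)$. By \eqref{fuame}, $\rho(\si)=\nu[\rho'(g(\si))]$, so the composability hypothesis of \eqref{siete} is met and
$$
g(\tau)=\big[\xi\diamond\rho'(g(\si))\big]\bu' g(\si)\,,
$$
which exhibits $g(\tau)$ as lying in the $\bu'$-orbit of $g(\si)$. This yields $g(\mathfrak O_\si)\subset\mathfrak O'_{g(\si)}$, and the inclusion for closures follows from the continuity of $g$ via $g(\overline{A})\subset\overline{g(A)}$. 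An equivalent route is to specialize Theorem \ref{liema} to $M=\{\si\}$, $N=\{\tau\}$ and note that $\widetilde\Xi_\si^\tau\diamond\rho'(g(\si))$ is non-empty precisely when $\widetilde\Xi_\si^\tau$ is.

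For part (ii), the $\subset$ direction comes from (i). For the reverse inclusion, the crucial tool is Lemma \ref{constant}: setting $x':=\rho'(g(\si))\in X'$, the hypothesis ${\rm Sat}^\diamond[\rho'(g(\Si))]=\Xi'$ combined with Lemma \ref{constant} gives $\mathfrak O^\diamond_{x'}=\Xi'_{\d'(x')}=\Xi'_{x'}$, so every element of the $\d'$-fiber over $x'$ has the form $\xi\diamond x'$ for some $\xi\in\Xi$ with $\d(\xi)=\mu(x')=\nu(x')=\rho(\si)$ (the last equality by \eqref{fuame}). Given any $\xi'\in\Xi'_{x'}$, write $\xi'=\xi\diamond x'$; then $\xi\bu\si$ is defined, and by \eqref{siete},
$$
\xi'\bu' g(\si)=\big[\xi\diamond x'\big]\bu' g(\si)=g(\xi\bu\si)\in g(\mathfrak O_\si)\,.
$$
Since $\mathfrak O'_{g(\si)}=\Xi'_{x'}\bu' g(\si)$, this yields $\mathfrak O'_{g(\si)}\subset g(\mathfrak O_\si)$, completing the equality. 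The invariance statement is then immediate: if $M\subset\Si$ is $\bu$-invariant and $\tau\in M$, then $\mathfrak O'_{g(\tau)}=g(\mathfrak O_\tau)\subset g(M)$.

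For part (iii), let $B'\subset\Si'$ be $\bu'$-invariant and set $B:=g^{-1}(B')$. For any $\si\in B$ we have $g(\si)\in B'$, so part (i) gives $g(\mathfrak O_\si)\subset\mathfrak O'_{g(\si)}\subset B'$ (the second inclusion by $\bu'$-invariance of $B'$); hence $\mathfrak O_\si\subset g^{-1}(B')=B$, which is exactly $\bu$-invariance of $B$. The main obstacle lies in part (ii): without the saturation hypothesis, the $\diamond$-orbit of $\rho'(g(\si))$ may fail to reach the whole fiber $\Xi'_{\rho'(g(\si))}$, so the reverse inclusion of orbits can fail. The content of Lemma \ref{constant} is precisely what ensures the hypothesis translates into the desired surjectivity onto that fiber, which is why it plays the role of a surrogate surjectivity condition for $\Phi$ in the absence of a genuine groupoid map $\Xi\to\Xi'$.
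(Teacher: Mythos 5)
Your proposal is correct and follows essentially the same route as the paper: part (i) is the equivariance identity \eqref{siete} (equivalently Theorem \ref{liema} with singletons, which is the paper's phrasing), part (ii) rests on the saturation hypothesis to write every element of the fibre $\big(\Xi'\big)_{\rho'(g(\si))}$ as $\xi\diamond\rho'(g(\si))$ --- you delegate this to the equality case of Lemma \ref{constant}, while the paper performs the same $\si_0$-manipulation inline --- and part (iii) is the same computation with \eqref{siete}. No gaps.
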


\begin{proof}
(i) For $\si,\tau\in\Si$\,, if $\si\overset{\bu}{\sim}\tau$ then
\begin{equation*}\label{minat}
\begin{aligned}
\emptyset\ne\widetilde\Xi_{\si}^\tau\diamond\rho'(g(\si))\overset{\eqref{relashn}}{\subset}\big(\widetilde\Xi'\big)_{g(\si)}^{g(\tau)}\,,
\end{aligned}
\end{equation*}
which implies that $\big(\widetilde\Xi'\big)_{g(\si)}^{g(\tau)}\ne\emptyset$\,, i.e. $g(\si)\overset{\bu'}{\sim}g(\tau)$\,. In its turn, this implies \eqref{arab}. 

\smallskip
(ii) Pick $\tau'\overset{\bu'}{\sim}g(\si)$\,, related through $\tau'=\xi'\bu'g(\si)$\,. By the assumption ${\rm Sat}^\diamond\big[\rho'(g(\Si))\big]=\Xi'$, write 
$$
\xi'=\xi\diamond\rho'(g(\si_0))\,.
$$ 
We have 
$$
\d'(\xi')=\rho'(g(\si))=\rho'(g(\si_0))
$$ 
and thus we get 
$$
\tau'=\xi'\bu'g(\si)=\big[\xi\diamond\rho'(g(\si))\big]\bu'g(\si)=g(\xi\bu\si)\in g\big(\mathfrak O_{\si}\big)\,.
$$

(iii) This follows easily from \eqref{siete}. Let $B'\subset\Si'$ be $\bu'$-invariant. If
$$
\tau=\xi\bu\si\overset{\bu}{\sim}\si\in g^{-1}(B')\,,
$$
then
$$
g(\tau)=g(\xi\bu\si)=\big[\xi\diamond\rho'(g(\si))\big]\bu' g(\si)\in B'.
$$
\end{proof}

Clearly, what happens on $\Si'\setminus g\big(\Si\big)$ cannot be related to the action of $\Xi'$ on $\Si'$; this is particularly relevant for relating the different notions of transitivity. 

\begin{cor}\label{sentintaa}
Suppose that the map $g$ is surjective. 
\begin{enumerate}
\item[(i)] If $\,\Th$ is transitive, $\Th'$ is also transitive. If $\,\Th$ is pointwise transitive, $\Th'$ is also pointwise transitive.
\item[(ii)] If $\,\Th$ is weakly pointwise transitive, $\Th'$ is also weakly pointwise transitive.
\item[(iii)] If $\,\Th$ satisfy any of the properties $({\rm TT}_j)$\, with $j\in\{1,2\}$\,, then $\Th'$ also satisfy the same property. 
\item[(iv)] If $\,M\subset\Si$ is minimal and $g(M)\subset\Si'$ is closed, $g(M)$ is also minimal.
\end{enumerate}
\end{cor}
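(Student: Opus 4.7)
My plan is to mirror the proof of Corollary \ref{secinta}, with Proposition \ref{jnitzel} and the inclusions \eqref{arab} playing the role that Theorem \ref{color} and \eqref{potroc} played there. The essential inputs are: images of orbits and orbit closures through $g$ lie inside the corresponding objects for $\Th'$ (Proposition \ref{jnitzel}(i)); pre-images through $g$ of $\bu'$-invariant sets are $\bu$-invariant (Proposition \ref{jnitzel}(iii)); together with the continuity and surjectivity of $g$.

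For (i), when $\Th$ is transitive or pointwise transitive I would pick $\si\in\Si$ with $\mathfrak O_\si=\Si$ (respectively $\overline{\mathfrak O}_\si=\Si$), apply \eqref{arab}, and use surjectivity of $g$ to conclude $\Si'=g(\Si)\subset\mathfrak O'_{g(\si)}$ (respectively $\subset\overline{\mathfrak O}'_{g(\si)}$). For (ii), given $\si$ with $\mathfrak C_\si=\Si$ and a closed $\bu'$-invariant set $C'\ni g(\si)$, I would pull back: $g^{-1}(C')$ is closed by continuity and $\bu$-invariant by Proposition \ref{jnitzel}(iii), and contains $\si$, hence equals $\Si$; applying $g$ and using surjectivity yields $C'=\Si'$. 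For (iii), in the $({\rm TT}_2)$ case a nonvoid open $\bu'$-invariant $U'$ pulls back to a nonvoid open $\bu$-invariant $g^{-1}(U')$, which is dense in $\Si$ by hypothesis; continuity together with surjectivity of $g$ then transfer density to $U'=g(g^{-1}(U'))$ inside $\Si'=g(\Si)$. The $({\rm TT}_1)$ case is handled by contraposition: a decomposition of $\Si'$ into two proper closed $\bu'$-invariant subsets pulls back to such a decomposition of $\Si$, contradicting the hypothesis on $\Th$.

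For (iv), I would pick $\si\in M$; minimality of $M$ gives $M=\overline{\mathfrak O}_\si$, and I would chain
\begin{equation*}
g(M)=g\big(\overline{\mathfrak O}_\si\big)\subset\overline{\mathfrak O}'_{g(\si)}\subset\overline{g(M)}=g(M)
\end{equation*}
using \eqref{arab} and the closedness of $g(M)$, forcing equality throughout. Running this for every $\si\in M$ yields that every orbit in $g(M)$ is dense in $g(M)$, i.e. $g(M)$ is minimal.

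The step I expect to be the main obstacle is the middle inclusion in (iv): $\overline{\mathfrak O}'_{g(\si)}\subset\overline{g(M)}$ requires $\mathfrak O'_{g(\si)}\subset g(M)$, i.e.\ that $g(M)$ itself be $\bu'$-invariant. This is not automatic from surjectivity of $g$ alone; it becomes automatic under the saturation hypothesis ${\rm Sat}^\diamond\big[\rho'(g(\Si))\big]=\Xi'$ of Proposition \ref{jnitzel}(ii), and I would either invoke that condition or establish invariance of $g(M)$ directly from \eqref{siete}, along the lines of the analogous step in Corollary \ref{secinta}(iv).
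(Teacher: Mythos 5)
Your proposal follows the paper's own route: the paper disposes of this corollary by remarking that it relies only on Proposition \ref{jnitzel} and is almost identical to the proof of Corollary \ref{secinta}, and your treatment of (i), (ii) and (iii) reproduces exactly those arguments (the orbit inclusions \eqref{arab} plus surjectivity of $g$ for (i); pull-back of closed, resp.\ open, $\bu'$-invariant sets via Proposition \ref{jnitzel}(iii), then push forward by surjectivity, for (ii) and (iii)).

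Concerning (iv): the paper's argument is literally the chain you wrote, $g(M)=g\big(\overline{\mathfrak O}_\si\big)\subset\overline{\mathfrak O}'_{g(\si)}\subset\overline{g(M)}=g(M)$, with no further justification of the middle inclusion, so the obstacle you flag is not a deviation from the paper but a genuine observation about its proof. Since $g(M)$ is closed, that inclusion amounts to $\mathfrak O'_{g(\si)}\subset g(M)$, i.e.\ to the $\bu'$-invariance of $g(M)$, and this cannot be extracted from \eqref{siete} alone: the equivariance relation only controls the action of elements of the special form $\xi\diamond\rho'(g(\si))$, while an arbitrary element of $\big(\Xi'\big)_{\rho'(g(\si))}$ need not be of that form; moreover there is no ``analogous step'' in Corollary \ref{secinta}(iv) to copy, because that proof asserts the corresponding inclusion without comment. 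So of your two suggested fixes only the first one is viable: add the hypothesis ${\rm Sat}^\diamond\big[\rho'(g(\Si))\big]=\Xi'$, so that Proposition \ref{jnitzel}(ii) gives $g(\mathfrak O_\si)=\mathfrak O'_{g(\si)}$ and hence invariance of $g(M)$, after which your chain closes; alternatively, under the stated hypotheses one only gets the weaker conclusion furnished by the first inclusion alone, namely $g(M)\subset\overline{\mathfrak O}'_{g(\tau)}$ for every $\tau\in M$, i.e.\ every point of $g(M)$ has orbit dense in $g(M)$ without $g(M)$ being known to be invariant.
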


\begin{proof}
The proof only relies on Proposition \ref{jnitzel} and is almost identical to the proof of Corollary \ref{secinta}.
\end{proof}

\begin{cor}\label{sentinta}
Assume that the map $g$ is surjective. If $\,\Th$ is recurrently transitive, $\Th'$ is also recurrently transitive.
\end{cor}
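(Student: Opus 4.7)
The plan is to mirror the proof of Corollary \ref{securinta} from the generalized vague morphism setting, only now invoking Theorem \ref{liema} in place of Theorem \ref{both}. The essential point is that recurrent transitivity is a property purely about non-emptiness of recurrence sets associated to open subsets, and Theorem \ref{liema} provides exactly the inclusion needed to transfer such non-emptiness along $g$.

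First, I would pick two non-empty open subsets $U',V'\subset \Si'$. Continuity of $g$ makes $U:=g^{-1}(U')$ and $V:=g^{-1}(V')$ open, and surjectivity of $g$ makes them non-empty with $g(U)=U'$ and $g(V)=V'$. Recurrent transitivity of $\Th$ then yields $\widetilde\Xi_{U}^{V}\neq\emptyset$.

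Next, I would apply Theorem \ref{liema} with $M=U$ and $N=V$ to obtain
\begin{equation*}
\widetilde\Xi_{U}^{V}\diamond\rho'\big(g(U)\big)\subset\big(\widetilde\Xi'\big)_{g(U)}^{g(V)}=\big(\widetilde\Xi'\big)_{U'}^{V'}\,.
\end{equation*}
It then suffices to check that the set on the left is non-empty. Pick $\xi\in\widetilde\Xi_{U}^{V}$ together with a witnessing $\si\in U$ satisfying $\d(\xi)=\rho(\si)$ and $\xi\bu\si\in V$. Condition \eqref{fuame} gives $\rho(\si)=\nu\big[\rho'(g(\si))\big]=\mu\big[\rho'(g(\si))\big]$, so the composability condition $\d(\xi)=\mu\big[\rho'(g(\si))\big]$ holds and the element $\xi\diamond\rho'\big(g(\si)\big)$ is a well-defined member of $\widetilde\Xi_{U}^{V}\diamond\rho'\big(g(U)\big)$. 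Consequently $\big(\widetilde\Xi'\big)_{U'}^{V'}\neq\emptyset$, as desired.

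I do not expect any substantial obstacle: the argument is a direct translation of the recurrent transitivity proof for $h$ to the algebraic morphism context, and the only point needing a moment of care is verifying that the inclusion of Theorem \ref{liema} actually has a non-empty left-hand side, which follows from the compatibility guaranteed by \eqref{fuame}.
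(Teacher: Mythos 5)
Your proposal is correct and follows essentially the same route as the paper: pull back the open sets through the surjective $g$, invoke recurrent transitivity of $\Th$, and apply Theorem \ref{liema} to conclude $\big(\widetilde\Xi'\big)_{U'}^{V'}\neq\emptyset$. Your explicit check (via \eqref{fuame}) that the left-hand side of the inclusion is non-empty is a detail the paper leaves implicit, but it is the same argument.
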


\begin{proof}
Suppose that $\Th$ is recurrently transitive and let $\emptyset\ne U,V\subset\Si'$ two open sets. Then $g^{-1}(U),g^{-1}(V)\subset\Si$ are non-void open sets, so $\widetilde\Xi_{g^{-1}(U)}^{g^{-1}(V)}\ne\emptyset$\, which implies
$$
\emptyset\ne\widetilde\Xi_{g^{-1}(U)}^{g^{-1}(V)}\diamond\rho'(U)\subset\big(\widetilde\Xi'\big)_{U}^{V}\,.
$$ 
So $\Th'$ is recurrently transitive. 
\end{proof}

\begin{cor}\label{siaia}
Let $(\Phi,g):(\Xi,\rho,\bu,\Si)\rightsquigarrow\big(\Xi',\rho',\bu',\Si'\big)$ be an algebraic morphism of groupoid actions. Assume that $\,{\rm Sat}^\diamond\big[\rho'(g(\Si))\big]=\Xi'$. If $\si$ is periodic, $g(\si)$ is also periodic.
\end{cor}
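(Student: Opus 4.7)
The strategy is to upgrade the compact witness ${\sf K}$ of periodicity at $\si$ to a compact witness ${\sf K}'$ of periodicity at $g(\si)$, by transporting ${\sf K}$ into $\Xi'$ through the action $\diamond$ and letting Theorem \ref{liema} handle the recurrent part. Fix a compact ${\sf K}\subset\Xi$ with ${\sf K}\,\widetilde\Xi_\si^\si=\Xi_{\rho(\si)}$. I would take
$$
{\sf K}':=\bigl\{\kappa\diamond\rho'(g(\si))\,\big|\,\kappa\in{\sf K},\;\d(\kappa)=\rho(\si)\bigr\}\subset\Xi',
$$
which is compact because it is the continuous image under $\kappa\mapsto\kappa\diamond\rho'(g(\si))$ of the closed (and hence compact) slice ${\sf K}\cap\d^{-1}(\rho(\si))$ of ${\sf K}$. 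It then suffices to establish the inclusion ${\sf K}'\,(\widetilde\Xi')_{g(\si)}^{g(\si)}\supset\Xi'_{\rho'(g(\si))}$; the reverse inclusion is automatic from the definition of $(\widetilde\Xi')_{g(\si)}^{g(\si)}$.

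Given $\eta'\in\Xi'_{\rho'(g(\si))}$, the saturation hypothesis ${\rm Sat}^\diamond\bigl[\rho'(g(\Si))\bigr]=\Xi'$ supplies $\xi\in\Xi$ and $\tau\in\Si$ with $\eta'=\xi\diamond\rho'(g(\tau))$. By \eqref{teans}, $\d'(\eta')=\rho'(g(\tau))$, which combined with $\d'(\eta')=\rho'(g(\si))$ forces $\rho'(g(\tau))=\rho'(g(\si))$, whence
$$
\eta'=\xi\diamond\rho'(g(\si)),\qquad \d(\xi)=\nu\bigl(\rho'(g(\si))\bigr)=\rho(\si)
$$
by \eqref{fuame}. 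Periodicity of $\si$ now permits a factorization $\xi=\kappa\zeta$ with $\kappa\in{\sf K}$ and $\zeta\in\widetilde\Xi_\si^\si$ (so that automatically $\d(\kappa)=\r(\zeta)$, $\d(\zeta)=\rho(\si)$), and Theorem \ref{liema} applied to $M=N=\{\si\}$ places $\omega':=\zeta\diamond\rho'(g(\si))$ in $(\widetilde\Xi')_{g(\si)}^{g(\si)}\subset\Xi'_{\rho'(g(\si))}^{\rho'(g(\si))}$, so in particular $\r'(\omega')=\rho'(g(\si))$. Invoking the action axiom for $\diamond$ and then the commutation of $\diamond$ with the right multiplication in $\Xi'$,
$$
\eta'=(\kappa\zeta)\diamond\rho'(g(\si))=\kappa\diamond\omega'=\kappa\diamond\bigl(\rho'(g(\si))\cdot\omega'\bigr)=\bigl(\kappa\diamond\rho'(g(\si))\bigr)\cdot\omega',
$$
which exhibits $\eta'$ as an element of ${\sf K}'\,(\widetilde\Xi')_{g(\si)}^{g(\si)}$, as desired.

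The step I expect to be the main obstacle is the last one: rewriting the mixed expression $\kappa\diamond(\zeta\diamond\rho'(g(\si)))$ as a genuine groupoid product in $\Xi'$. It is here that the commutation axiom $(\xi\diamond\eta')\xi'=\xi\diamond(\eta'\xi')$ must be combined with the isotropy property of $\omega'$ inherited from $\widetilde\Xi_\si^\si$ via Theorem \ref{liema}, in order to legitimately insert and then absorb the unit $\rho'(g(\si))$. Beyond this, one must carefully track source/range matchings so that every composition (in $\Xi$, via $\diamond$, or in $\Xi'$) stays in its proper composable space; once this is in hand, the compactness of ${\sf K}'$ and the inclusion ${\sf K}'(\widetilde\Xi')_{g(\si)}^{g(\si)}\subset\Xi'_{\rho'(g(\si))}$ become routine.
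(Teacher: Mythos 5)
Your proof is correct and takes essentially the same route as the paper: the same compact witness ${\sf K}'={\sf K}\diamond\rho'\big(g(\si)\big)$, the same application of Theorem \ref{liema} with $M=N=\{\si\}$, and the same absorption of the unit $\rho'(g(\si))$ via the commutation axiom; the only difference is that you re-derive by hand (from the saturation hypothesis and \eqref{teans}) the identification of $\big(\Xi'\big)_{\rho'(g(\si))}$ with the $\diamond$-orbit of $\rho'(g(\si))$, which the paper obtains by citing Lemma \ref{constant}. Your explicit element-wise bookkeeping of the source/range matchings is a fair expansion of the paper's set-level computation, not a different argument.
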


\begin{proof}
If $\si$ is periodic, then $\widetilde\Xi_\si^\si$ is syndetic in $\Xi_{\rho(\si)}$ with compact set $\sf K$. By \eqref{fuame} and Theorem \ref{liema}, one has 
$$
\mathfrak O_{\rho'(g(\si))}^\diamond=\Xi_{\rho(\si)}\diamond\rho'(g(\si))=\big({\sf K}\widetilde\Xi_{\si}^\si\big)\diamond\rho'(g(\si))={\sf K}\diamond\Big[\widetilde\Xi_{\si}^\si\diamond\rho'(g(\si))\Big]\subset{\sf K}\diamond\big(\widetilde\Xi'\big)_{g(\si)}^{g(\si)}\,.
$$ 
Because of the Lemma \ref{constant}, we see that 
$$
\big(\Xi'\big)_{\rho'(g(\si))}=\mathfrak O_{\rho'(g(\si))}^\diamond\subset {\sf K}\diamond\big(\widetilde\Xi'\big)_{g(\si)}^{g(\si)}={\sf K}\diamond\Big[\rho'(g(\si))\big(\widetilde\Xi'\big)_{g(\si)}^{g(\si)}\Big]=\big[{\sf K}\diamond\rho'(g(\si))\big]\big(\widetilde\Xi'\big)_{g(\si)}^{g(\si)}\,.
$$ 
Hence $\big(\widetilde\Xi'\big)_{g(\si)}^{g(\si)}$ is syndetic in $\big(\Xi'\big)_{\rho'(g(\si))}$, with compact set ${\sf K'}={\sf K}\diamond\rho'(g(\si))$\,.
\end{proof}

\begin{cor}\label{siaia2}
Let $(\Phi,g):(\Xi,\rho,\bu,\Si)\rightsquigarrow\big(\Xi',\rho',\bu',\Si'\big)$ be an algebraic morphism of groupoid actions, with $\Si'$ a locally compact space. Assume that $\,{\rm Sat}^\diamond\big[\rho'(g(\Si))\big]=\Xi'$. If $\si$ is almost periodic, $g(\si)$ is also almost periodic.
\end{cor}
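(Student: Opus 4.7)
The plan is to adapt the argument for periodicity in Corollary \ref{siaia}, now replacing the singleton $\{g(\si)\}$ by an arbitrary open neighborhood, paralleling the way Definition \ref{indrasniesc}(d) generalizes Definition \ref{indrasniesc}(b). Fix an open neighborhood $V'$ of $g(\si)$ in $\Si'$. By the local compactness of $\Si'$ one may shrink $V'$ and assume without loss of generality that $\overline{V'}$ is compact: indeed, by monotonicity of the recurrence sets, if $(\widetilde\Xi')_{g(\si)}^{V''}$ is syndetic in $(\Xi')_{\rho'(g(\si))}$ for a smaller $V''\subset V'$, the same is automatic for $V'$ itself. Set $U:=g^{-1}(V')$; continuity of $g$ makes this an open neighborhood of $\si$, and almost periodicity of $\si$ supplies a compact ${\sf K}\subset\Xi$ with ${\sf K}\,\widetilde\Xi_\si^U=\Xi_{\rho(\si)}$.

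Next I would use the hypothesis ${\rm Sat}^\diamond[\rho'(g(\Si))]=\Xi'$ together with Lemma \ref{constant} and \eqref{fuame} to identify $(\Xi')_{\rho'(g(\si))}=\mathfrak O_{\rho'(g(\si))}^\diamond=\Xi_{\rho(\si)}\diamond\rho'(g(\si))$. Distributing $\diamond\rho'(g(\si))$ through the factorization $\Xi_{\rho(\si)}={\sf K}\,\widetilde\Xi_\si^U$ by the associativity axiom of the action, and applying Theorem \ref{liema} (with $M=\{\si\}$, $N=U$) together with $g(U)\subset V'$, one obtains $(\Xi')_{\rho'(g(\si))}={\sf K}\diamond\big[\widetilde\Xi_\si^U\diamond\rho'(g(\si))\big]\subset{\sf K}\diamond(\widetilde\Xi')_{g(\si)}^{V'}$, exactly in parallel with the proof of Corollary \ref{siaia}.

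The final step is to re-express the right-hand side as a groupoid product by a compact subset of $\Xi'$. For every $\kappa\in{\sf K}$ and $\eta\in(\widetilde\Xi')_{g(\si)}^{V'}$ with $\d(\kappa)=\mu(\eta)$, the commutativity of $\diamond$ with right multiplication gives $\kappa\diamond\eta=\big(\kappa\diamond\r'(\eta)\big)\eta$, and $\r'(\eta)=\rho'(\eta\bu'g(\si))\in\rho'(\overline{V'})$. Setting ${\sf K}':={\sf K}\diamond\rho'(\overline{V'})$, which is compact as the continuous image under the action map of a compact subset of $\Xi\times X'$, one concludes $(\Xi')_{\rho'(g(\si))}\subset{\sf K}'(\widetilde\Xi')_{g(\si)}^{V'}$, so $g(\si)$ is almost periodic.

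The main obstacle is precisely this last re-expression. In Corollary \ref{siaia} all elements of $(\widetilde\Xi')_{g(\si)}^{g(\si)}$ have range $\rho'(g(\si))$, so a single compact correction ${\sf K}\diamond\rho'(g(\si))$ suffices; but in the almost periodic case the ranges of elements of $(\widetilde\Xi')_{g(\si)}^{V'}$ vary over $\rho'(V')$, which need not be contained in any compact set. This is exactly where the hypothesis that $\Si'$ is locally compact is used, permitting one to arrange $\overline{V'}$ compact and hence ${\sf K}\diamond\rho'(\overline{V'})$ compact.
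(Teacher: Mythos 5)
Your proposal is correct and follows essentially the same route as the paper's proof: shrink the neighborhood to have compact closure using local compactness of $\Si'$, pull it back by $g$, apply Theorem \ref{liema} and Lemma \ref{constant} exactly as in Corollary \ref{siaia}, and absorb the $\diamond$-translation into the compact set ${\sf K}\diamond\rho'(\overline{V'})$ via the commutation relation $\kappa\diamond\eta=\big(\kappa\diamond\r'(\eta)\big)\eta$. Your final paragraph merely spells out in detail the re-expression step that the paper states without comment.
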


\begin{proof}
Assume that $\si$ is almost periodic, let $U'\subset \Si'$ be a neighborhood of $g(\si)$ and, without lose of generality, assume that $U'$ has a compact closure. Set $U=g^{-1}(U')$\,. Since $\si$ is almost periodic, we have $\Xi_{\rho(\si)}={\sf K}\widetilde\Xi_{\si}^U$ for some compact set ${\sf K}\subset \Xi$\,. We apply Theorem \ref{liema} just as in Corollary \ref{siaia} to find that 
$$
\mathfrak O_{\rho'(g(\si))}^\diamond={\sf K}\diamond\Big[\widetilde\Xi_{\si}^U\diamond\rho'(g(\si))\Big]\subset{\sf K}\diamond\big(\widetilde\Xi'\big)_{g(\si)}^{U'}\,.
$$ 
Because of Lemma \ref{constant}, we see that 
$$
\big(\Xi'\big)_{\rho'(g(\si))}=\mathfrak O_{\rho'(g(\si))}^\diamond\subset {\sf K}\diamond\big(\widetilde\Xi'\big)_{g(\si)}^{U'}=\big({\sf K}\diamond\rho'(U')\big)\big(\widetilde\Xi'\big)_{g(\si)}^{U'}\,.
$$ 
So we have ${\sf K'}\big(\widetilde\Xi'\big)_{g(\si)}^{U'}=\big(\Xi'\big)_{\rho'(g(\si))}$\,, with ${\sf K'}={\sf K}\diamond\rho'(\overline{U'})$ compact. The result follows. 
\end{proof}

When dealing with limit points, a properness notion is needed.

\begin{defn}\label{amu}
The algebraic morphism $\Phi:=(\Xi,\mu,\diamond,\Xi')$ from $\Xi$ to $\Xi'$ is called {\it proper} if the function
\begin{equation}\label{brober}
F^\diamond_{x'}:\Xi_{\nu(x')}\to\Xi'_{x'}\,,\quad F^\diamond_{x'}(\xi):=\xi\diamond x'
\end{equation}
is proper for every $x'\in X'\subset\Xi'$.
\end{defn}

\begin{ex}\label{ciucurel}
If $\Xi=\G$ and $\Xi'=\G'$ are groups, as in Example \ref{proposit}, there is a single interesting map
$$
F^\diamond_{\e'}:\G=\Xi_{\e}\to\G'=\Xi_{\e'}\,,\quad F^\diamond_{\e'}(\xi)=\xi\bu\e'=\beta(\xi)\e'=\beta(\xi)\,,
$$
so the algebraic morphism is proper if and only if the group morphism $\beta:\G\to\G'$ is proper.
\end{ex}

\begin{ex}\label{ciucurel2}
Suppose now that $\,\Xi=\bigsqcup_{x\in X}\G_x$ and $\,\Xi'=\bigsqcup_{x'\in X'}\G'_{x'}$ are group bundles, as in Example \ref{procopsit}. The groupoid units are precisely the group units of the fibres. For each $x'\in X'$ one has
$$
F^\diamond_{x'}\!:\G_{\nu(x')}\to\G'_{x'}\,,\quad F^\diamond_{x'}(\xi)=\xi\bu_{x'}\!\e_{x'}\!=\beta_{x'}(\xi)\e_{x'}=\beta_{x'}(\xi)\,.
$$
The algebraic morphism is proper if and only if all the group morphisms $\beta_{x'}$ are proper.
\end{ex}

\begin{ex}\label{obosit}
In Example \ref{poposit} we have seen that an algebraic morphism $\Phi=(X,\mu,\diamond,X')$ between two spaces (trivial groupoids) consists of a continuous function $\mu=\nu:X'\to X$ together with the action 
$$
\nu(x')\diamond x'=F^\diamond_{x'}\big(\nu(x')\big)=x'.
$$
The functions appearing in \eqref{brober} are constant maps defined between singletons, so the algebraic morphism is always proper.
\end{ex}

\begin{prop}\label{transflim}
Let $(\Phi,g):(\Xi,\rho,\bu,\Si)\rightsquigarrow\big(\Xi',\rho',\bu',\Si'\big)$ be an algebraic morphism of groupoid actions, with $\Phi$ proper. If $\si$ is a limit point for the first action, $g(\si)$ is a limit point for the second action. If $\si$ is recurrent for the first action, $g(\si)$ is recurrent for the second action. If $\si$ is a weakly periodic point for the first action, $g(\si)$ is a weakly periodic point for the second action.
\end{prop}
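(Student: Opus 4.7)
The plan is to mimic the strategy used for Proposition \ref{caciu}: recast the limit-set condition via Remark \ref{precisely} in terms of non-relative-compactness of recurrence sets, then transport this property through $g$ by combining Theorem \ref{liema} with the properness of the fibre maps $F^\diamond_{x'}$ from Definition \ref{amu}. The recurrent statement will be an immediate corollary (take $\tau=\si$), and the weakly periodic statement will follow from the same mechanism applied to $M=N=\{\si\}$, once one notices that $\widetilde\Xi_\si^\si$ is closed so that ``non-compact'' upgrades to ``non-relatively-compact'', which is what properness controls.

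The one general fact I will isolate at the outset is elementary: a continuous proper map $f\colon X\to Y$ sends every non-relatively-compact set to a non-relatively-compact set, since if $\overline{f(A)}$ were compact then by properness $f^{-1}\big(\overline{f(A)}\big)$ would be compact and would contain $A$, contradicting the hypothesis on $A$.

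Next, to establish $g\big(\mathfrak L_\si\big)\subset\mathfrak L'_{g(\si)}$, I would fix $\tau\in\mathfrak L_\si$ and an arbitrary neighborhood $V'$ of $g(\tau)$. Setting $V:=g^{-1}(V')$ yields a neighborhood of $\tau$, so Remark \ref{precisely} gives that $\widetilde\Xi_\si^V$ is not relatively compact. Theorem \ref{liema} applied to $M=\{\si\}$, $N=V$ gives
$$F^\diamond_{\rho'(g(\si))}\!\big(\widetilde\Xi_\si^V\big)=\widetilde\Xi_\si^V\diamond\rho'(g(\si))\subset\big(\widetilde\Xi'\big)_{g(\si)}^{g(V)}\subset\big(\widetilde\Xi'\big)_{g(\si)}^{V'},$$
where condition \eqref{fuame} places $\widetilde\Xi_\si^V$ inside $\Xi_{\nu(\rho'(g(\si)))}=\Xi_{\rho(\si)}$, i.e.\ inside the domain of the map $F^\diamond_{\rho'(g(\si))}$. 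By properness of $\Phi$ and the elementary fact above, the left-hand side is non-relatively-compact, hence so is $\big(\widetilde\Xi'\big)_{g(\si)}^{V'}$; invoking Remark \ref{precisely} in the target action gives $g(\tau)\in\mathfrak L'_{g(\si)}$. Specializing $\tau=\si$ yields the recurrent statement directly from the definition of $\Si_{\rm rec}$.

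For the weakly periodic case, I would apply Theorem \ref{liema} with $M=N=\{\si\}$ to get $F^\diamond_{\rho'(g(\si))}\!\big(\widetilde\Xi_\si^\si\big)\subset\big(\widetilde\Xi'\big)_{g(\si)}^{g(\si)}$. The subgroup $\widetilde\Xi_\si^\si$ is closed in $\Xi$ as the preimage of $\{\si\}$ under the continuous map $\xi\mapsto\xi\bu\si$ on the closed fibre $\Xi_{\rho(\si)}$, and it is non-compact by hypothesis, so it is non-relatively-compact. Properness of $F^\diamond_{\rho'(g(\si))}$ forces the image to be non-relatively-compact, and then so is $\big(\widetilde\Xi'\big)_{g(\si)}^{g(\si)}$; in particular it is not compact, which is exactly $g(\si)\in\Si'_{\rm wper}$. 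The main obstacle is really just this bookkeeping step --- identifying the correct proper map via \eqref{fuame} and upgrading the non-compactness hypothesis to non-relative-compactness using closedness --- since once these are in place, Theorem \ref{liema} does all the heavy lifting.
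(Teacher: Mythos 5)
Your proposal is correct and takes essentially the same route as the paper: the characterization of limit points in Remark \ref{precisely}, Theorem \ref{liema} specialized to $M=\{\si\}$, the identification $\widetilde\Xi_\si^V\diamond\rho'(g(\si))=F^\diamond_{\rho'(g(\si))}\big(\widetilde\Xi_\si^V\big)$ made legitimate by \eqref{fuame}, and properness of $\Phi$ --- the paper only differs in phrasing the compactness transfer as a proof by contradiction instead of isolating your elementary lemma on proper maps. Your additional observation that $\widetilde\Xi_\si^\si$ is closed, used to upgrade non-compactness to non-relative compactness in the weakly periodic case, addresses a point the paper passes over with ``repeat the same arguments,'' so it is a small refinement rather than a deviation.
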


\begin{proof}
We rely on Remark \ref{precisely}. The beginning of the argument consists in copying the first part of the proof of Proposition \ref{caciu}. To finish the verification for limit points, one needs formula \eqref{relashn} and properness of the algebraic morphism. The statement about recurrent points is then a direct consequence.

\smallskip
Let $\tau'=g(\tau)$ with $\tau\in\mathfrak L_\si$ and let $V'$ be a neighborhood of $\tau'$. Then $V\!:=g^{-1}(V')$ is a neighborhood of $\tau$ and $g(V)\subset V'$. Since $\big(\widetilde{\Xi}'\big)_{g(\si)}^{g(V)}\!\subset\!\big(\widetilde{\Xi}'\big)_{g(\si)}^{V'}$\,, it is enough to show that $\big(\widetilde{\Xi}'\big)_{g(\si)}^{g(V)}$ is not relatively compact. Since $\tau$ is a limit point, $\widetilde{\Xi}_\si^V$ is not relatively compact. The inclusion \eqref{relashn} becomes here
\begin{equation}\label{relachn}
\widetilde\Xi_\si^V\diamond\rho'\big(g(\si)\big)\subset\big(\widetilde\Xi'\big)_{g(\si)}^{g(V)}\,.
\end{equation}
Suppose however that $\big(\widetilde{\Xi}'\big)_{g(\si)}^{g(V)}$ is relatively compact. Setting $x':=\rho'\big(g(\si)\big)$\,, we see by \eqref{fuame} that 
$$
\widetilde\Xi_\si^V\subset\Xi_{\rho(\si)}=\Xi_{\nu(x')}\,.
$$ Then, using the notation \eqref{brober},  
$$
\widetilde\Xi_\si^V\diamond\rho'\big(g(\si)\big)=F^\diamond_{\rho'(g(\si))}\big(\widetilde\Xi_\si^V\big)
$$ 
is also relatively compact. 
Since $\Phi$ has been assumed proper, $\widetilde\Xi_\si^V$ would also be relatively compact and we finished verifying the inclusion by contradiction. 

\smallskip
To prove the statement about weakly periodic points, repeat the same arguments, with $\{g(\si)\}$ instead of $V'$ and $\{\si\}$ instead of $V$.
\end{proof}

\section{Appendix. Some remarks on algebraic morphisms}\label{zgarier}

A particular case of the next proposition appears in \cite[Remark\;2.4, Prop.\;2.12]{Bu}. Our present result shows that, in a (very) special case, algebraic morphisms of groupoid actions can be reinterpreted as usual morphisms of actions. This also (greatly) extends Example \ref{terminalg}.

\begin{prop}\label{miraj}
Let $\,\Th:=(\Xi,\rho,\bu,\Si)$ and $\,\Th':=\big(\Xi',\rho',\bu',\Si'\big)$ two continuous groupoid actions. There is a one-to-one correspondence between
\begin{itemize}
\item algebraic morphisms of groupoid actions $\big(\Phi=(\Xi,\mu,\diamond,\Xi'),g\big)$ with $\nu:=\mu|_{X'}\!:X'\!\to X$ a homeomorphism,
\item morphisms of groupoid actions $(\Psi,f):\Th\to\Th'$ (as in Definition \ref{sprevest}) with $\psi\!:=\Psi|_X\!:X\to X'$ a homeomorphism. 
\end{itemize}
\end{prop}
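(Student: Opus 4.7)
The plan is to exhibit explicit mutually inverse assignments between algebraic morphisms of actions $(\Phi,g)$ with $\nu := \mu|_{X'}$ a homeomorphism and usual morphisms $(\Psi,f)$ with $\psi := \Psi|_X$ a homeomorphism. In both constructions the function on the acted spaces will be preserved (i.e., $f=g$), so the content is the correspondence between the algebraic data $(\Xi,\mu,\diamond,\Xi')$ and the functor $\Psi:\Xi\to\Xi'$.

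From algebraic to usual: given $(\Phi,g)$ with $\nu$ a homeomorphism, I would set
$$
\Psi(\xi) := \xi\diamond\nu^{-1}(\d(\xi))\,,\qquad \psi := \nu^{-1}\,,\qquad f := g\,.
$$
The composability requirement $\d(\xi)=\mu(\nu^{-1}(\d(\xi)))$ is automatic from $\mu=\nu\circ\r'$, so $\Psi$ is well-defined and continuous. The identity $\Psi|_X = \nu^{-1}$ follows from the first action axiom, and $\d'\circ\Psi=\psi\circ\d$ from \eqref{teans}. To obtain $\r'\circ\Psi=\psi\circ\r$, I would first establish the auxiliary identity $\mu(\xi\diamond\eta')=\r(\xi)$ (whenever $\d(\xi)=\mu(\eta')$) by applying the composability clause of axiom (ii) of Definition \ref{grupact} to an auxiliary pair $(\xi_1,\xi)$ with $\d(\xi_1)=\r(\xi)$, which forces $\mu(\xi\diamond\eta')=\d(\xi_1)=\r(\xi)$; combined with $\mu=\nu\circ\r'$ this yields the desired formula. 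For multiplicativity, action axiom (ii) gives $\Psi(\xi_1\xi_2)=\xi_1\diamond\Psi(\xi_2)$; then factoring $\Psi(\xi_2)=\nu^{-1}(\r(\xi_2))\cdot\Psi(\xi_2)$ (unit absorbed on the left) and applying commutation with right multiplication produces $\Psi(\xi_1)\Psi(\xi_2)$. Finally, the equivariance condition \eqref{ciudata} reduces to \eqref{siete} because \eqref{fuame} forces $\rho'(g(\si))=\nu^{-1}(\d(\xi))$, so $\xi\diamond\rho'(g(\si))=\Psi(\xi)$.

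From usual to algebraic: given $(\Psi,f)$ with $\psi$ a homeomorphism, I would set $\nu:=\psi^{-1}$, $\mu:=\nu\circ\r'$, $g:=f$ and define the action by $\xi\diamond\eta':=\Psi(\xi)\cdot\eta'$ (product in $\Xi'$, well-defined since $\d(\xi)=\mu(\eta')$ is equivalent to $\d'(\Psi(\xi))=\r'(\eta')$ via $\psi$). The two action axioms and commutation with right multiplication follow from functoriality of $\Psi$ and associativity in $\Xi'$; properties \eqref{fuame} and \eqref{siete} follow from the diagram of Definition \ref{sprevest} and the equivariance of $(\Psi,f)$.

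The two round trips are direct. Starting from $(\Phi,g)$, the new action reads $\xi\diamond_{\rm new}\eta' = \Psi(\xi)\cdot\eta' = (\xi\diamond\nu^{-1}(\d(\xi)))\cdot\eta'$, which equals $\xi\diamond\eta'$ by the commutation law (the unit $\nu^{-1}(\d(\xi))=\r'(\eta')$ is absorbed). Starting from $(\Psi,f)$, one recovers $\Psi_{\rm new}(\xi)=\Psi(\xi)\cdot\nu^{-1}(\d(\xi))=\Psi(\xi)\cdot\d'(\Psi(\xi))=\Psi(\xi)$. The only genuinely nontrivial step in the entire argument is the auxiliary identity $\mu(\xi\diamond\eta')=\r(\xi)$, which is not axiomatic but is extracted from the composability clause of the second action axiom; once this is in hand, verifying that $\Psi$ is a functor and that $(\Psi,f)$ is equivariant is routine.
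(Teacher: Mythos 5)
Your proposal is correct and follows essentially the same route as the paper: the same explicit assignments $\xi\diamond\xi':=\Psi(\xi)\xi'$ with $\nu:=\psi^{-1}$, $\mu:=\nu\circ\r'$, $g:=f$ in one direction and $\Psi(\xi):=\xi\diamond\nu^{-1}[\d(\xi)]$, $\psi:=\nu^{-1}$, $f:=g$ in the other, with equivariance checked via \eqref{siete} and \eqref{ciudatta}. You merely supply some verifications the paper leaves implicit (the identity $\mu(\xi\diamond\eta')=\r(\xi)$, functoriality of $\Psi$, and the two round trips), all of which are carried out correctly.
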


\begin{proof}
Given $(\Psi,f)$\,, we set 
\begin{equation}\label{miramar}
g:=f,\quad\nu:=\psi^{-1},\quad\mu:=\nu\circ\r'\!=\psi^{-1}\!\circ\r',\quad\xi\diamond\xi'\!:=\Psi(\xi)\xi'.
\end{equation} 
The definition of the action $\diamond$ is justified by
$$
\d(\xi)=\mu(\xi')=\psi^{-1}\big[r'(\xi')\big]\Longrightarrow \psi[\d(\xi)]=\d'[\Psi(\xi)]=\r'(\xi')\,.
$$ 
Let us use \eqref{miramar} to check \eqref{siete}:
$$
\begin{aligned}
g(\xi\bu\si)&=f(\xi\bu\si)\\
&\overset{\eqref{ciudatta}}{=}\Psi(\xi)\bu'f(\si)\\
&=\Psi(\xi)\bu'\big[\rho'(f(\si))\bu'f(\si)\big] \\
&=\big[\Psi(\xi)\rho'(f(\si))\big]\bu'f(\si) \\
&\overset{\eqref{miramar}}{=}\big[\xi\diamond\rho'(f(\si))\big]\bu'f(\si)\\
&=\big[\xi\diamond\rho'(g(\si))\big]\bu'g(\si)\,.\\
\end{aligned}
$$
In the opposite direction, if $(\Phi,g)$ is given, set 
\begin{equation}\label{miramar2}
f\!:=g,\quad\psi:=\nu^{-1},\quad\Psi(\xi):=\xi\diamond\nu^{-1}[\d(\xi)]\,.
\end{equation} 
This is possible, as $\mu\big(\nu^{-1}[\d(\xi)]\big)=\d(\xi)$\,. We now check \eqref{ciudatta}, using \eqref{miramar2}: 
$$
\begin{aligned}
f(\xi\bu\si)&=g(\xi\bu\si)\\
&\overset{\eqref{siete}}{=}\big[\xi\diamond\rho'(g(\si))\big]\bu'g(\si)\\
&\overset{\eqref{fuame}}{=}\big[\xi\diamond\nu^{-1}(\rho(\si))\big]\bu'g(\si)\\
&=\big[\xi\diamond\nu^{-1}(\d(\xi))\big]\bu'g(\si)\\
&=\Psi(\xi)\bu'g(\si)\\
&=\Psi(\xi)\bu'f(\si)\,.
\end{aligned}
$$
\end{proof}

\begin{rem}\label{image} Under the situation of the previous Proposition, one finds out that 
$$
\Psi(\Xi)={\rm Sat}^\diamond(X')\,.
$$ 
Since $\rho'$ is assumed surjective, it follows that $(\Psi,f)$ is an epimorphism of actions if and only if $g$ is surjective and ${\rm Sat}^\diamond(X')={\rm Sat}^\diamond\big[\rho'(g(\Si))\big]=\Xi'$. We may think of the latter condition as being the equivalent for surjectivity in a generalized way. 
\end{rem}

We finish this appendix with two results on stabilizations under the action $\diamond$ resulting from an algebraic morphism of groupoids.

\begin{prop}\label{structure}
For every subset $Y'\!\subset X'$, ${\rm Sat}^\diamond(Y')$ is a subsemigroupoid of $\,\Xi'$ (stable for the multiplication). If $\,Y'$ is invariant ($\d'(\xi')\in Y'\!\Leftrightarrow\r'(\xi')\in Y'$)\,, ${\rm Sat}^\diamond(Y')$ is a subgroupoid. We have ${\rm Sat}^\diamond(Y')={\rm Sat}^\diamond(Z')$ if and only if $\,Y'=Z'$.
\end{prop}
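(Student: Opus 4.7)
The plan is to work with the explicit description
\begin{equation*}
{\rm Sat}^\diamond(Y') = \{\xi\diamond y' \mid y' \in Y',\, \xi \in \Xi,\, \d(\xi) = \nu(y')\}\,,
\end{equation*}
which is valid because $\mu(y') = \nu(\r'(y')) = \nu(y')$ when $y' \in X'$. Two further facts will be invoked throughout: Lemma \ref{constant} gives $\d'(\xi\diamond y') = \d'(y') = y'$ for $y' \in X'$, while the general identity $\mu(\xi\diamond\xi') = \r(\xi)$ (which is automatic from the action axioms) combined with $\mu = \nu\circ\r'$ yields $\nu\big(\r'(\xi\diamond y')\big) = \r(\xi)$, a fact that will be crucial for inverses.

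For the subsemigroupoid claim, I would take composable $\xi'_1, \xi'_2 \in {\rm Sat}^\diamond(Y')$, write $\xi'_1 = \xi_1\diamond y'_1$ with $y'_1 \in Y'$, and use $\r'(\xi'_2) = \d'(\xi'_1) = y'_1$, so $y'_1\xi'_2 = \xi'_2$. The commutativity relation $\xi\diamond(\eta'\zeta') = (\xi\diamond\eta')\zeta'$ then collapses the product to $\xi'_1\xi'_2 = (\xi_1\diamond y'_1)\xi'_2 = \xi_1\diamond(y'_1\xi'_2) = \xi_1\diamond\xi'_2$, which lies in ${\rm Sat}^\diamond(Y')$ because the saturation is $\diamond$-invariant by definition.

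The main obstacle is closure under inversion when $Y'$ is invariant. Given $\xi' = \xi\diamond y' \in {\rm Sat}^\diamond(Y')$, set $z' := \r'(\xi')$. Invariance of $Y'$, combined with $\d'(\xi') = y' \in Y'$, gives $z' \in Y'$; the remark $\nu(z') = \r(\xi) = \d(\xi^{-1})$ shows that $\xi^{-1}\diamond z'$ is defined. I will prove the identity $\xi'^{-1} = \xi^{-1}\diamond z'$ as follows: first, $\xi^{-1}\diamond\xi' = (\xi^{-1}\xi)\diamond y' = \nu(y')\diamond y' = y'$; then applying the same commutativity to $\xi^{-1}\diamond(\xi'\xi'^{-1})$ (which equals $\xi^{-1}\diamond z'$) yields $\xi^{-1}\diamond z' = (\xi^{-1}\diamond\xi')\xi'^{-1} = y'\xi'^{-1} = \xi'^{-1}$. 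Since $z' \in Y'$, this writes $\xi'^{-1}$ as an element of ${\rm Sat}^\diamond(Y')$, finishing the subgroupoid claim.

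For the final assertion, suppose ${\rm Sat}^\diamond(Y') = {\rm Sat}^\diamond(Z')$ and pick $y' \in Y'$. Then $y' = \xi\diamond z'$ for some $z' \in Z'$ and $\xi \in \Xi$; applying $\d'$ and using that $y' \in X'$ is a unit gives $y' = \d'(y') = \d'(\xi\diamond z') = \d'(z') = z' \in Z'$, so $Y' \subset Z'$; the reverse inclusion is symmetric.
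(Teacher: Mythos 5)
Your proposal is correct and follows essentially the same route as the paper's proof: the same explicit description of ${\rm Sat}^\diamond(Y')$ as $\{\xi\diamond y'\}$, the same use of the commutation relation to compute products, the same candidate $\xi^{-1}\diamond\r'(\xi')$ for the inverse (your verification via $\xi^{-1}\diamond(\xi'\xi'^{-1})=(\xi^{-1}\diamond\xi')\xi'^{-1}$ is only a cosmetic variant of the paper's check), and the same ``$z'$ is a unit, apply \eqref{teans}'' argument for the last assertion.
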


\begin{proof}
If $\xi_1\diamond x_1', \xi_2\diamond x_2'\in {\rm Sat}^\diamond(Y')$ are elements such that $x_1'=\d'\big(\xi_1\diamond x_1'\big)=\r'\big(\xi_2\diamond x_2'\big)$\,, then 
$$
\big(\xi_1\diamond x_1'\big)\big(\xi_2\diamond x_2'\big)=\xi_1\diamond \Big[x_1'\big(\xi_2\diamond x_2'\big)\Big]=\xi_1\diamond \big(\xi_2\diamond x_2'\big)=(\xi_1\xi_2)\diamond x_2'\in {\rm Sat}^\diamond(Y')\,.
$$ 
Therefore, ${\rm Sat}^\diamond(Y')$ is closed under multiplication. Actually, the same computation proves the stronger property 
$$
{\rm Sat}^\diamond(X')\,{\rm Sat}^\diamond(Y')\subset{\rm Sat}^\diamond(Y')\,.
$$

Let us now assume that $Y'$ is invariant and prove that ${\rm Sat}^\diamond(Y')$ is closed by inversion: Let 
$$
\xi'=\xi\diamond\d'(\xi')\in {\rm Sat}^\diamond(Y')\,.
$$ 
We claim that 
$$
(\xi')^{-1}\!:=\xi^{-1}\!\diamond \r'(\xi')\in{\rm Sat}^\diamond(Y')\,.
$$ 
Indeed $\r'(\xi')\in Y'$, by invariance, and
$$
\mu(\r'(\xi'))=\mu(\xi')=\mu\big(\xi\diamond\d'(\xi')\big)=\r(\xi)=\d(\xi^{-1})
$$ 
proves that $\xi^{-1}$ may be applied to $\r'(\xi')$\,. Straightforward computations show that $\xi^{-1}\!\diamond \r'(\xi')$ is indeed the inverse of $\xi'$. For example
$$
\big(\xi^{-1}\!\diamond \r'(\xi')\big)\xi'=\xi^{-1}\!\diamond\Big[\r'(\xi')\xi'\Big]=\xi^{-1}\!\diamond \xi'=\xi^{-1}\!\diamond\big[\xi\diamond\d'(\xi')\big]=\d(\xi)\diamond \d'(\xi')=\d'(\xi')\,.
$$

Finally, if ${\rm Sat}^\diamond(Y')={\rm Sat}^\diamond(Z')$, then, for any $z'\in Z'$ there exists $y'\in Y'$ such that $z'=\xi\diamond y'$. But $z'$ is a unit. Apply \eqref{teans} to get $z'=y'\in Y'$, hence $Z'\!\subset Y'$. Then repeat the argument to conclude that $Y'\!=Z'$. 
\end{proof}

If our algebraic morphism comes from an algebraic morphism of actions, 
we can also show that ${\rm Sat}^\diamond\big[\rho'(g(\Si))\big]$ is a subgroupoid. This holds because we don't actually need the full strength of invariance. If $\r'(\xi')\in Y'$ holds for every $\xi'\in \Xi_{Y'}'$ of the form $\xi'=\xi\diamond y'$, the conclusion still follows. And this occurs in $\rho'(g(\Si))$ because of \eqref{siete}. 

\smallskip
As we said above, it is common to call a topological groupoid $\Xi$ {\it open} if its source map $\d:\Xi\to\Xi$ is open. Such a condition played in \cite{FM} a crucial role and it also appeared in the present article several times. It seems interesting to explore the fate of openness under algebraic morphisms. In the proof of the next result we will use Fell's criterion \cite[Prop.\,1.1]{Wi} for open functions twice.

\begin{prop}\label{enfin}
If $\,\Xi$ is an open groupoid and ${\rm Sat}^\diamond(X')=\Xi'$, then $\Xi'$ is an open groupoid.
\end{prop}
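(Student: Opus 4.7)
\textbf{Proof plan for Proposition \ref{enfin}.} Since being an open groupoid is equivalent to the openness of $\d'$, it suffices to show that $\d':\Xi'\to X'$ is an open map. The plan is to verify Fell's criterion for $\d'$ by exploiting the assumption ${\rm Sat}^\diamond(X')=\Xi'$ to rewrite any target $\xi'\in\Xi'$ as $\xi\diamond y'$ with $\xi\in\Xi$ and $y'\in X'$, and then to import the openness hypothesis on $\d$ through Fell's criterion a second time to lift approximating nets from $X'$ to $\Xi$, finally transferring them back to $\Xi'$ by the continuity of $\diamond$.

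More concretely, fix $\xi'\in\Xi'$ and a net $(x_\alpha')_{\alpha}$ in $X'$ with $x_\alpha'\to\d'(\xi')$. By the assumption ${\rm Sat}^\diamond(X')=\Xi'$, there exist $\xi\in\Xi$ and $y'\in X'$ with $\xi'=\xi\diamond y'$; by \eqref{teans} one has $\d'(\xi')=\d'(\xi\diamond y')=\d'(y')=y'$ and the composability forces $\d(\xi)=\mu(y')=\nu(y')$. Applying the continuous map $\nu:X'\to X$ to the net, we get $\nu(x_\alpha')\to\nu(y')=\d(\xi)$ in $X$.

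Now I invoke Fell's criterion a first time, using that $\d:\Xi\to X$ is open: there exist a subnet $\bigl(\nu(x_{\alpha_\beta}')\bigr)_\beta$ and a net $(\xi_\beta)_\beta$ in $\Xi$ such that $\xi_\beta\to\xi$ and $\d(\xi_\beta)=\nu(x_{\alpha_\beta}')=\mu(x_{\alpha_\beta}')$. Set $\eta_\beta':=\xi_\beta\diamond x_{\alpha_\beta}'$, which is well defined by the composability just checked. The continuity of $\diamond$ yields $\eta_\beta'=\xi_\beta\diamond x_{\alpha_\beta}'\to\xi\diamond y'=\xi'$, while \eqref{teans} gives $\d'(\eta_\beta')=\d'(x_{\alpha_\beta}')=x_{\alpha_\beta}'$. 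This is exactly the lifting Fell's criterion requires (second use), so $\d'$ is open.

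The main obstacle is a bookkeeping one rather than a conceptual one: one has to check that the element produced by ${\rm Sat}^\diamond(X')=\Xi'$ really matches $\d'(\xi')$ on the right (so that the target net $(x_\alpha')$ approaches the correct unit $y'$), and that the composability condition $\d(\xi_\beta)=\mu(x_{\alpha_\beta}')$ survives the passage to the subnet. Both reduce to identities \eqref{beans}, \eqref{teans} and to $\mu=\nu\circ\r'$; once these are in place, the continuity of $\diamond$ finishes the argument.
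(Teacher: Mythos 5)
Your proposal is correct and follows essentially the same route as the paper's proof: write $\xi'=\xi\diamond\d'(\xi')$ using the saturation hypothesis, push the net of units through $\mu=\nu\circ\r'$ into $X$, lift it along the open map $\d$ via Fell's criterion, and transfer it back with the continuity of $\diamond$ to verify Fell's criterion for $\d'$. The extra bookkeeping you flag (that the unit produced by the saturation is indeed $\d'(\xi')$, via \eqref{teans}) is handled implicitly in the paper but is exactly right.
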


\begin{proof}
Let $x'=\d'(\xi')\in X'$ and pick some net $x'_i\in X'$ converging to $x'$. By the assumptions, we must have $\xi'=\xi\diamond x'$, for some $\xi\in\Xi$\,. Remark that $\mu\big(x'_i\big)\to\mu(x')=\d(\xi')$ so, by Fell's criterion, there exists a subnet $x'_{i_j}$ and a net $\xi_j\in\Xi$ such that $\mu\big(x'_{i_j}\big)=\d(\xi_j)$ and $\xi_j\to\xi$\,. Hence $\xi_j\diamond x'_{i_j}$ is well defined and 
$$
\xi_j':=\xi_j\diamond x'_{i_j}\to\xi\diamond x'=\xi'.
$$
So we found a subnet $x_{i_j}'\!=\d'(\xi'_j)$ with $\xi'_j\in\Xi$ converging to $\xi'$. We conclude, by applying Fell's criterion, that $\d'$ is an open map.
\end{proof}


{\bf Acknowledgements:} M. M. has been supported by the Fondecyt Project 1200884. F. F. has been supported by the Fondecyt Project 1171854. He is grateful to Aris Daniilidis for his kindness and help.



\medskip
M. Mantoiu:

\smallskip
Facultad de Ciencias, Departamento de Matem\'aticas, Universidad de Chile

Las Palmeras 3425, Casilla 653, 

Santiago, Chile.

E-mail: mantoiu@uchile.cl 

\medskip
F. Flores:

\smallskip
Departamento de Ingenier\'ia Matem\'atica, Universidad de Chile, 

Beauchef 851, Torre Norte, Oficina 436,

Santiago, Chile

E-mail: feflores@dim.uchile.cl

\end{document}